\theoremstyle{definition}
\newtheorem{Cor}[equation]{Corollary}
\newtheorem{Exam}[equation]{Example}
\newtheorem{Def}[equation]{Definition}
\newtheorem{Thm}[equation]{Theorem}
\newtheorem{Lem}[equation]{Lemma}
\newtheorem{Rem}[equation]{Remark}
\newtheorem{Prop}[equation]{Proposition}
\newcommand{\hobox}[3]{\draw (0+#1,0-#2) rectangle (1+#1,-1-#2)++(-0.5,+0.5) node {$ #3$};}
\numberwithin{equation}{section}
\DeclareMathAlphabet\mathsf{OT1}{cmss}{m}{n}
\newcommand{\ev}{\mathrm{ev}}
\newcommand{\odd}{\mathrm{od}}
\newcommand{\Ann}{\mathrm{Ann}}
\newcommand{\caO}{\mathcal{O}}
\renewcommand{\labelenumi}{(\theenumi)}
\begin{document}

\title[Annihilator varieties]{On the annihilator variety of a highest weight Harish-Chandra module}

\author{Zhanqiang Bai}
\address{Department of Mathematical Sciences, Soochow University, Suzhou 215006, China}
\email{\tt  zqbai@suda.edu.cn}

\author{Jing Jiang*}
\address{
School of mathematical Sciences, Shanghai Key Laboratory of Pure Mathematics and Mathematical Practice, East China Normal University, Shanghai 200241, China} 
\email{jjsd6514@163.com}

\begin{abstract}
Let $G$ be a Hermitian type Lie group with maximal compact subgroup $K$. Let  $L(\lambda)$ be a highest weight Harish-Chandra module of $G$ with the infinitesimal character  $\lambda$. By using some combinatorial algorithm, we obtain a description of the annihilator variety of $L(\lambda)$. As an application, when $L(\lambda)$ is unitarizable, we prove that 
the Gelfand-Kirillov dimension of $L(\lambda)$ only depends on the value of $z=(\lambda,\beta^{\vee})$, where $\beta$ is the highest root.
\end{abstract}
\footnote{*Corresponding author:  jjsd6514@163.com}
\subjclass[2020]{22E47, 17B10}

\keywords{associated variety; Gelfand-Kirillov dimension; Young tableau; nilpotent orbit}

\maketitle

\section{Introduction}
Let $G$ be a  simple Lie group with complexified Lie algebra $\mathfrak{g}$ and $K$ be a maximal 
compact subgroup   with complexified Lie algebra $\mathfrak{k}$ such that $G / K$ is a Hermitian symmetric space. We have a Cartan decomposition $\mathfrak{g}=\mathfrak{k}\oplus \mathfrak{p}$ and a decomposition $\mathfrak{p}=\mathfrak{p}^+\oplus\mathfrak{p}^-$ into nonzero irreducible $K$-subrepresentations. Then we have a triangular decomposition $\mathfrak{g}=\mathfrak{k}\oplus\mathfrak{p}^+\oplus\mathfrak{p}^-$ and $\mathfrak{q}=\mathfrak{k}\oplus\mathfrak{p}^+$ is a maximal  parabolic subalgebra of $\mathfrak{g}$.
Let $\mathfrak{h}$ be  a Cartan subalgebra of $\mathfrak{g}$. Denote by $\Phi$ the root system associated with $(\mathfrak{g},\mathfrak{h})$, fix a positive system $\Phi^+$, let $\mathbb{C}_{\lambda-\rho}$ be a finite-dimensional irreducible $\mathfrak{k}$-module with highest weight $\lambda -\rho\in\mathfrak{h}^*$, where $\rho$ is half the sum of positive roots.
It can also be viewed as a
	$\mathfrak{q}$-module with trivial $\mathfrak{p}^+$-action. The corresponding generalized Verma module $N(\lambda)$ is defined by
	\[
	N(\lambda):=U(\mathfrak{g})\otimes_{U(\mathfrak{q})}\mathbb{C}_{\lambda-\rho}.
	\]
	The simple quotient of $N(\lambda)$ is denoted by $L(\lambda)$, which is a highest weight module with highest weight $\lambda-\rho$. From \cite{EHW}, we call $L(\lambda)$ a highest weight Harish-Chandra module. The classification of unitarizable highest weight Harish-Chandra modules is given in \cite{EHW} and \cite{Ja1} independently.

For a $U(\mathfrak{g})$-module $M$, let $I(M)=\Ann(M)$ be its annihilator ideal in  $U(\mathfrak{g})$ and $J(M)$ be the corresponding graded ideal in $ S(\mathfrak{g})=\text{gr} (U(\mathfrak{g}))$. The zero set of $J(M)$ in the dual vector space $\mathfrak{g}^*$ of $\mathfrak{g}$ is called {\it the annihilator variety} of $M$, which is also called {\it the associated variety} of $I(M)$. We usually denote it by $V(\Ann(M))$. The study of associated varieties of  primitive ideals (or annihilator varieties of highest weight modules) is a very important problem since it is closely related with many research fields, such as representations of Weyl groups, Kazhdan-Lusztig cells and  representations of Lie groups, see for example \cites{BB,BV82,BMSZ-ABCD,GSK}. 
	 Borho-Brylinski \cite{BB} proved that the associated variety of a primitive ideal $I$ with a fixed regular integral infinitesimal
	character is the Zariski closure of the nilpotent orbit in $\mathfrak{g}^*$ attached to $I$, via the Springer correspondence. Joseph \cite{AJ6} extended this result to a primitive ideal with a general infinitesimal  character.

Recently Bai-Ma-Wang \cite{BMW} found some simple algorithms to characterize the nilpotent orbit appearing in the annihilator variety of a highest weight module for all classical type Lie algebras by using  the Robinson-Schensted algorithm. In this paper, our first goal is to give the explicit description of  $V(\Ann(L(\lambda)))$ for all  highest weight Harish-Chandra modules of classical types by using the partition algorithm in \cite{BMW}.
See Theorem \ref{AV-a}, Theorem \ref{Bpartition}, Theorem \ref{C-AV}, Theorem \ref{so(2,2n-2)} and Theorem \ref{D-AV}. 

The Gelfand-Kirillov (GK) dimension is an important invariant to measure the size of an infinite-dimensional $U(\mathfrak{g})$-module $M$, see \cite{AJ1} and \cite{Vo78}.
From Joseph \cite{AJ1}, we know that $\dim V(\Ann(L(\lambda)))=2\mathrm{GKdim} L(\lambda)$. 
When there is only one nilpotent orbit with the  dimension $2\mathrm{GKdim} L(\lambda)$, we can determine this nilpotent orbit appearing in $V(\Ann(L(\lambda)))$ by the Gelfand-Kirillov dimension of $L(\lambda)$.
In \cite{BXX}, Bai-Xiao-Xie computed the Gelfand-Kirillov dimensions of highest weight Harish-Chandra modules of  exceptional  types.
Then we can determine $V(\Ann(L(\lambda)))$ by the Gelfand-Kirillov dimension of $L(\lambda)$ since there is only  one nilpotent orbit with the  dimension $2\mathrm{GKdim} L(\lambda)$ for these cases.
See Theorem \ref{e6-GK} and Theorem \ref{e7-GK}.
Except for the cases of $SU(p,q)$, $Sp(n,\mathbb{R})$ and $SO^*(2n)$, our characterization of $V(\Ann(L(\lambda)))$ is given in a function  of $z=(\lambda,\beta^{\vee})$.

 Our second goal of this paper is to prove that when $L(\lambda)$ is a unitary highest weight module with highest weight $\lambda-\rho=\lambda_0+z\xi$, where $z=(\lambda,\beta^{\vee})$, $(\lambda_0+\rho,\beta)=0$, $\xi$ is orthogonal to the root system $\Phi(\mathfrak{k})$ and $(\xi,\beta^{\vee})=1$, then ${\rm GKdim}\:L(\lambda)$ is independent of the selection of $\lambda_0$. In other words,  ${\rm GKdim}\:L(\lambda)$ only depends on the value of $z=(\lambda,\beta^{\vee})$. This result was firstly proved in \cite{Bai-Hu} by using previous results in \cite{Jo92}. In our paper, we use our characterization of $V(\Ann(L(\lambda)))$ to give a new proof for this result. See Theorem \ref{kconst}.


	

The paper is organized as follows. In \S \ref{pre}, we give some necessary preliminaries about Gelfand-Kirillov dimension, associated variety, annihilator variety and partition algorithm for highest weigh modules. In \S \ref{ann-sln}-\ref{ann-2e}, we give the characterization of the  annihilator  variety $V(\Ann(L(\lambda)))$ for  a highest weight Harish-Chandra module of  type $A_{n-1}$, type $B_n$ and $C_n$,  type $D_n$, 
  type $E_6$ and $E_7$ respectively.
In \S \ref{GKdim-k}, we prove that when $L(\lambda)$ is a unitary highest weight module, then ${\rm GKdim}\:L(\lambda)$ only depends on the value of $z=(\lambda,\beta^{\vee})$.

\section{Preliminaries}\label{pre}
In this section, we will give some brief preliminaries on GK dimension, associated variety, annihilator variety and partition algorithm for highest weight modules. See \cites{Vo78,VJ,BMW} for more details.

\subsection{Gelfand-Kirillov dimension and associated variety}

Let $\mathfrak{g}$ be a simple complex Lie algebra and $\mathfrak{h}$ be a Cartan subalgebra.
Let $M$ be a finitely generated $U(\mathfrak{g})$-module. Fix a finite dimensional generating subspace $M_0$ of $M$. Let $U_{n}(\mathfrak{g})$ be the standard filtration of $U(\mathfrak{g})$. Set $M_n=U_n(\mathfrak{g})\cdot M_0$ and
\(
{\rm gr} (M)=\bigoplus\limits_{n=0}^{\infty} {\rm gr}_n M,
\)
where ${\rm gr}_n M=M_n/{M_{n-1}}$. Thus ${\rm gr}(M)$ is a graded module of ${\rm gr}(U(\mathfrak{g}))\simeq S(\mathfrak{g})$.

\begin{Def} The {Gelfand-Kirillov dimension} of $M$  is defined by
	\begin{equation*}
		\operatorname{GKdim} M = \varlimsup\limits_{n\rightarrow \infty}\frac{\log\dim( U_n(\mathfrak{g})M_{0} )}{\log n}.
	\end{equation*}
\end{Def}


\begin{Def}[\cite{VJ}]
	The  {associated variety} of $M$ is defined by
	\begin{equation*}
		V(M):=\{X\in \mathfrak{g}^* \mid f(X)=0 {\rm~for~ all~} f\in \operatorname{Ann}_{S(\mathfrak{g})}(\operatorname{gr} M)\}.
	\end{equation*}
\end{Def}

The above two definitions are independent of the choice of $M_0$, and $\dim V(M)=	{\rm GKdim}\: M$ (e.g., \cite{NOT}). 

\begin{Def} Let $\mathfrak{g}$ be a finite-dimensional simple Lie algebra. Let $I$ be a two-sided ideal in $U(\mathfrak{g})$. Then ${\rm gr}(U(\mathfrak{g})/I)\simeq S(\mathfrak{g})/\text{gr}I$ is a graded $S(\mathfrak{g})$-module. Its annihilator is ${\rm gr}I$. We define its associated variety by
	$$V(I):=V(U(\mathfrak{g})/I)=\{X\in \mathfrak{g}^* \mid p(X)=0\ \mbox{for all $p\in {\rm gr}I$}\}.
	$$
\end{Def}

\subsection{Associated varieties of highest weight Harish-Chandra modules}

In \cite{Bai-Hu}, Bai-Hunziker have found the formula of  the  Gelfand-Kirillov dimensions of unitary highest weight modules. We adopt their notations here.

Let $(-, -): \mathfrak{h} \times \mathfrak{h}^* \to \mathbb{C}$ be the canonical pairing.
Let $\beta$ denote the highest positive root. Let $\rho$ be the half sum of positive roots. The dual Coxeter number $h^\lor$ of the root system $\Phi$ is defined by
$$h^\lor:=(\rho,\beta^\lor)+1.$$
where $\beta^\lor$ is the coroot of $\beta$. Equivalently, $h^\lor$ can be defined as the sum of coefficients of highest short roots in $\Phi^+$ when it is written as a sum of simple roots. Let $r$ be the $\mathbb{R}$-rank of $G$, i.e., the dimension of a Cartan subgroup of the (real) group $G$. And we have the following table.




\begin{table}[htbp]
	\centering
	\renewcommand{\arraystretch}{1.4}
	\setlength\tabcolsep{5pt}
	\caption{Some constants associated to $G/K$}
\label{constants}
{	\begin{tabular}{llll}		
  \toprule
  $G$ &   $r$ & $c$  & $h^\vee-1$ \\ \hline 
 $SU(p,q)$, $n=p+q-1$ & $\min\{p,q\}$ &  $1$ & $n$  \\  
    $Sp(n,\mathbb{R})$  & $n$ &   $1/2$ & $n$   \\  
    $SO^{*}(2n)$, $n=2m$  & $m$  & $2$ & $2n-3$ \\ 
   $SO^{*}(2n)$, $n=2m+1$  & $m$  & $2$ & $2n-3$ \\ 
   $SO(2,2n-1)$  & $2$  &  $n-3/2$ & $2n-2$ \\ 
  $SO(2,2n-2)$  & $2$ &  $n-2$ & $2n-3$ \\ 
  $E_{6(-14)}$   & $2$ &  $3$ & $11$ \\  
  $E_{7(-25)}$  & $3$ &  $4$ & $17$ \\
\bottomrule

\end{tabular}}

\end{table}

They also have proved that if $L(\lambda)$ is a unitary highest weight module with highest weight $\lambda-\rho$, then there is an integer $0 \leqslant k(\lambda) \leqslant r$ such that the associated variety of $L(\lambda)$ is $\overline{\mathcal{O}}_{k(\lambda)}$. We list the two crucial theorems.
\begin{Prop}[{\cite[Thm. 1.1 and Cor. 5.2]{Bai-Hu}}]\label{k}Suppose $L(\lambda)$ is  a unitary highest weight module  with highest weight $\lambda-\rho$.
Let $0 \leqslant k(\lambda) \leqslant r$ be the integer such that $V(L(\lambda))=\overline{\mathcal{O}}_{k(\lambda)}$. Then $k(\lambda)=r$ if $-(\lambda-\rho,\beta^{\vee})>c(r-1)$ and $k(\lambda)=\frac{-(\lambda-\rho, \beta^{\vee})}{c}$
if $-(\lambda-\rho,\beta^{\vee})\leq c(r-1)$.
Also we have
$$
\operatorname{dim} \overline{\mathcal{O}}_{k(\lambda)}=k(\lambda)\left(h^{\vee}-1\right)-k(\lambda)(k(\lambda)-1)c.
$$
In particular, $\operatorname{dim} \mathfrak{p}^{+}=r\left(h^{\vee}-1\right)-r(r-1) c$.
\end{Prop}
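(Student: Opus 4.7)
The plan is to combine the Enright--Howe--Wallach/Jakobsen classification of unitary highest weight modules with the $K_\C$-orbit structure on $\p^+$. For a Hermitian symmetric pair $(\g,\k)$ of real rank $r$, the associated Jordan triple system endows $\p^+$ with exactly $r+1$ nilpotent $K_\C$-orbits, totally ordered by closure $\{0\} = \overline{\mathcal{O}}_0 \subset \overline{\mathcal{O}}_1 \subset \cdots \subset \overline{\mathcal{O}}_r = \p^+$. Since the associated variety of any highest weight Harish--Chandra module is a $K_\C$-stable closed conical subvariety of $\p^+$, it must equal $\overline{\mathcal{O}}_{k(\la)}$ for a unique $0 \le k(\la) \le r$, and the task reduces to identifying this integer.

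To pin down $k(\la)$, I would invoke the EHW/Jakobsen description: along the line $\la = \la_0 + z\xi$ with $(\la_0+\rho,\beta)=0$ and $(\xi,\beta^\vee)=1$, the module $L(\la)$ is unitary precisely when $-(\la-\rho,\beta^\vee)$ is one of the reduction points $0, c, 2c, \ldots, (r-1)c$ or lies in the continuous range $-(\la-\rho,\beta^\vee) > (r-1)c$. I would argue that $k(\la)=j$ at the reduction point $-(\la-\rho,\beta^\vee)=jc$ and $k(\la)=r$ throughout the continuous range. The base case $j=0$ gives a one-dimensional $L(\la)$ with $V(L(\la))=\{0\}=\overline{\mathcal{O}}_0$. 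For the continuous range, $N(\la)$ is simple so $L(\la)=N(\la)$ and its associated variety is the full cone $\p^+=\overline{\mathcal{O}}_r$. For the remaining reduction points $1 \le j \le r-1$, the cleanest tool is the theta correspondence: at the $j$-th reduction point $L(\la)$ appears as the theta lift of a finite-dimensional representation of a compact group of rank $j$, and the associated variety of such a lift is known to be $\overline{\mathcal{O}}_j$ (by Nishiyama--Zhu, Loke--Ma, and related works). Alternatively, one can argue by a direct Hilbert--series or $\k$-character computation that the leading term of the character of $L(\la)$ at the $j$-th reduction point is supported on $\overline{\mathcal{O}}_j$.

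The dimension formula then follows from a uniform orbit-stratification argument. The successive differences $\dim \overline{\mathcal{O}}_k - \dim \overline{\mathcal{O}}_{k-1} = h^\vee - 1 - 2(k-1)c$ measure the dimension of the $k$-th Peirce eigenspace in the Jordan triple decomposition; summing telescopically from $1$ to $k$ yields the quadratic expression $k(h^\vee-1) - k(k-1)c$. Setting $k=r$ recovers the ``in particular'' clause $\dim \p^+ = r(h^\vee-1) - r(r-1)c$. One can also verify the formula case-by-case against Table~\ref{constants}, which handles each infinite family and each of the two exceptional Hermitian types.

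The hard part will be the inductive step in the second paragraph: showing that the leading cycle of $L(\la)$ at the $j$-th reduction point really sits on $\overline{\mathcal{O}}_j$ rather than some smaller orbit. EHW/Jakobsen reducibility produces composition factors but does not by itself pick out the associated variety of the \emph{simple} quotient. Either the theta-correspondence computation of associated varieties or a detailed $\k$-character analysis is required. The type-by-type approach via the Bai--Ma--Wang partition algorithm pursued in the main body of this paper offers an independent verification and in practice circumvents the need to invoke dual pairs.
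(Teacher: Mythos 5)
First, a point of reference: the paper does not prove this Proposition at all — it is imported verbatim from [Bai-Hu, Thm.\ 1.1 and Cor.\ 5.2] as a black box (the closest the paper comes is Theorem \ref{kconst} in \S\ref{GKdim-k}, which re-derives the identification of $k(\lambda)$ with the reduction points case by case from the partition algorithms). So there is no in-paper proof to compare against, and your proposal must be judged on its own terms. Its architecture is sound: the $r+1$ totally ordered $K_{\C}$-orbits in $\p^+$, the reduction of the problem to pinning down the integer $k(\lambda)$, the observation that $L(\lambda)=N(\lambda)$ in the continuous range so that $V(L(\lambda))=\p^+$, and the telescoping of Peirce-space dimensions to obtain $\dim\overline{\mathcal{O}}_k=k(h^\vee-1)-k(k-1)c$. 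The first and third paragraphs are essentially complete.

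The gap is exactly where you locate it, and it is not closed. Two concrete problems. (i) The theta-correspondence step is only available for $SU(p,q)$, $Sp(n,\mathbb{R})$, $SO^*(2n)$ and (with care) $SO(2,m)$; the exceptional groups $E_{6(-14)}$ and $E_{7(-25)}$ do not occur in classical reductive dual pairs, so ``the associated variety of such a lift is known'' cannot be invoked there, and those are precisely the cases where no uniform shortcut exists. The actual proof in [Bai-Hu] proceeds via Joseph's results and Hilbert-series computations rather than dual pairs, and the present paper's \S\ref{GKdim-k} shows how much case-by-case work the alternative route costs. (ii) Your description of the unitary set --- reduction points $0,c,\dots,(r-1)c$ together with the continuous range $-(\lambda-\rho,\beta^\vee)>(r-1)c$ --- is the Wallach-set picture, valid when $\lambda_0$ is trivial in the relevant directions. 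For general $\lambda_0$ the EHW classification gives a half-line plus finitely many equally spaced discrete points whose number and location depend on the root systems $Q(\lambda_0)$ and $R(\lambda_0)$, and one must still verify that unitarity together with $-(\lambda-\rho,\beta^\vee)\le c(r-1)$ forces $-(\lambda-\rho,\beta^\vee)\in c\mathbb{Z}$ and forces $k(\lambda)$ to equal that multiple of $c$. As written, the argument establishes the statement only for the scalar-type (Wallach) modules; the general case is asserted, not proved.
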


\begin{Prop}[{\cite[Thm. 6.2]{Bai-Hu}}]\label{unitary}
Suppose $L(\lambda)$ is a unitary highest weight module  with highest weight $\lambda-\rho$. Define  $z(\lambda):=(\lambda,\beta^{\vee})$ and $z_k=(\rho, \beta^{\vee})-kc$. Then
	\[	{\rm GKdim}\:L(\lambda)=\left\{
\begin{array}{ll}
	rz_{r-1}, & \textnormal{if}\:z(\lambda)<z_{r-1}, \\	     	  	   
	kz_{k-1}, & \textnormal{if}\:z(\lambda)=z_k {\rm~with~} 1 \leqslant k \leqslant r-1,\\
	0, & \textnormal{if}\:z(\lambda)=z_0.
\end{array}	
\right.
\]	

\end{Prop}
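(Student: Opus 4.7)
My plan is to deduce Proposition \ref{unitary} from Proposition \ref{k} by a direct change of variable, together with the identity $\operatorname{GKdim} L(\lambda) = \dim V(L(\lambda))$ recalled after the definition of the associated variety. The key is that Proposition \ref{k} already expresses everything through the single integer $k(\lambda)$, and $k(\lambda)$ depends on $\lambda$ only through the scalar $(\lambda - \rho, \beta^\vee)$.

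First I would rewrite the hypotheses of Proposition \ref{k} in terms of $z(\lambda) = (\lambda, \beta^\vee)$. Using $(\lambda - \rho, \beta^\vee) = z(\lambda) - (h^\vee - 1)$ and $z_k = (h^\vee - 1) - kc$, the case $-(\lambda - \rho, \beta^\vee) > c(r-1)$ becomes $z(\lambda) < z_{r-1}$, so $k(\lambda) = r$. The complementary case combined with the Enright–Howe–Wallach / Jakobsen unitarity classification forces $-(\lambda - \rho, \beta^\vee) \in \{0, c, 2c, \dots, (r-1)c\}$, so $z(\lambda) = z_k$ for some integer $0 \leq k \leq r-1$ and $k(\lambda) = k$.

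Next I would substitute into the orbit-dimension formula $\dim \overline{\mathcal{O}}_{k(\lambda)} = k(\lambda)(h^\vee - 1) - k(\lambda)(k(\lambda) - 1)c$ from Proposition \ref{k}. The choice $k(\lambda) = r$ collapses to $r[(h^\vee - 1) - (r-1)c] = r z_{r-1}$; the choice $k(\lambda) = k$ with $1 \leq k \leq r - 1$ collapses to $k[(h^\vee - 1) - (k-1)c] = k z_{k-1}$; and $k(\lambda) = 0$ gives $0$. In all three regimes the answer depends only on $z(\lambda)$, which is the content of Proposition \ref{unitary}.

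The only step carrying genuine content beyond Proposition \ref{k} is the passage from the inequality $-(\lambda - \rho, \beta^\vee) \leq c(r-1)$ to membership in the discrete reduction set $\{0, c, \dots, (r-1)c\}$, which is where unitarity enters. If instead one wished to bypass Proposition \ref{k} and prove the formula directly from the partition-algorithm descriptions of $V(\operatorname{Ann}(L(\lambda)))$ developed in Sections \ref{ann-sln}–\ref{ann-2e}, the main obstacle would be a type-by-type verification that the partition produced by the Robinson–Schensted-type algorithm at a unitary parameter $\lambda = \lambda_0 + \rho + z\xi$ is independent of $\lambda_0$; this is essentially the uniformity one is trying to prove, and would have to be extracted from the combinatorics of the EHW reduction points in each Hermitian classical type separately, together with the explicit tables for $E_{6(-14)}$ and $E_{7(-25)}$.
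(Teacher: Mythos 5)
Your derivation is correct: writing $(\rho,\beta^{\vee})=h^{\vee}-1$, the substitution $-(\lambda-\rho,\beta^{\vee})=(h^{\vee}-1)-z(\lambda)$ turns the two cases of Proposition \ref{k} into $z(\lambda)<z_{r-1}$ and $z(\lambda)=z_k$ with $0\leq k\leq r-1$, and the orbit-dimension formula factors as $k(\lambda)\bigl[(h^{\vee}-1)-(k(\lambda)-1)c\bigr]=k(\lambda)\,z_{k(\lambda)-1}$, which together with $\operatorname{GKdim}L(\lambda)=\dim V(L(\lambda))$ yields exactly the displayed formula. However, this is a genuinely different route from the paper's. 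Proposition \ref{unitary} is quoted from \cite{Bai-Hu} without proof, and your argument is essentially the deduction of Theorem 6.2 of \cite{Bai-Hu} from Theorem 1.1 and Corollary 5.2 there (the paper's Proposition \ref{k}); in particular it inherits from that source, and ultimately from Joseph's results in \cite{Jo92}, the nontrivial input that $k(\lambda)=-(\lambda-\rho,\beta^{\vee})/c$ on the discrete part of the unitary spectrum. The proof the paper actually supplies for this statement is Theorem \ref{kconst} together with Section \ref{GKdim-k}: it re-establishes $V(L(\lambda))=\overline{\mathcal{O}}_{k(\lambda)}$ with $k(\lambda)=\min(k,r)$ at $z=z_k$ independently, by a case-by-case analysis over all Hermitian types using the Robinson--Schensted antichain criterion (Lemma \ref{antichain}) and the explicit annihilator-variety computations of Sections \ref{ann-sln}--\ref{ann-2e} --- precisely the type-by-type verification you flag in your last paragraph as the alternative. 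Your route buys a two-line computation at the cost of being a corollary of the cited input; the paper's route buys an independent combinatorial proof, which is its stated purpose.
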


In \cite{BXX}, Bai-Xiao-Xie have given the description of the associated variety of $L(\lambda)$, we recall some notations and results.

For  a totally ordered set $ \Gamma $, we  denote by $ \mathrm{Seq}_n (\Gamma)$ the set of sequences $ x=(x_1,x_2,\cdots, x_n) $   of length $ n $ with $ x_i\in\Gamma $. In our paper, we usually take $\Gamma$ to be $\mathbb{Z}$ or a coset of $\mathbb{Z}$ in $\mathbb{C}$. By using the Robinson-Schensted algorithm for $x\in  \mathrm{Seq}_n (\Gamma)$, we can get a Young tableau $P(x)$. Denote ${\bf p}(x)=\mathrm{sh}(P(x))=[p_1,p_2,...,p_N]$, where $p_i$ is the number of boxes in the $i$-th
row of $P(x)$.

\begin{Def}Fix a positive integer $n$, a partition of $n$ is a decreasing sequence ${\bf p}=[p_1,p_2,...,p_n]$ of nonnegative integers such that $\sum_{1\leq i\leq n}p_i=n$.
	We say ${\bf q}=[q_1,q_2,\cdots,q_N]$ is the dual partition of a partition ${\bf p}=[p_1,p_2,\cdots,p_N]$ and write ${\bf q}={\bf p}^t$ if $q_i$ is the length of $i$-th column of the Young tableau $P$ with shape ${\bf p}$.
\end{Def}

For a Young diagram $P$, use $ (k,l) $ to denote the box in the $ k $-th row and the $ l $-th column.
We say the box $ (k,l) $ is {even} (resp. {odd}) if $ k+l $ is even (resp. odd). Let $ p_i ^{\ev}$ (resp. $ p_i^{\odd} $) be the number of even (resp. odd) boxes in the $ i $-th row of the Young diagram $ P $.
One can easily check that \begin{equation}\label{eq:ev-od}
	p_i^{\ev}=\begin{cases}
		\left\lceil \frac{p_i}{2} \right\rceil,&\text{ if } i \text{ is odd},\\
		\left\lfloor \frac{p_i}{2} \right\rfloor,&\text{ if } i \text{ is even},
	\end{cases}
	\quad p_i^{\odd}=\begin{cases}
		\left\lfloor \frac{p_i}{2} \right\rfloor,&\text{ if } i \text{ is odd},\\
		\left\lceil \frac{p_i}{2} \right\rceil,&\text{ if } i \text{ is even}.
	\end{cases}
\end{equation}
Here for $ a\in \mathbb{R} $, $ \lfloor a \rfloor $ is the largest integer $ n $ such that $ n\leq a $, and $ \lceil a \rceil$ is the smallest integer $n$ such that $ n\geq a $. For convenience, we set
\begin{equation*}
	{\bf p}^{\ev}=(p_1^{\ev},p_2^{\ev},\cdots)\quad\mbox{and}\quad {\bf p}^{\odd}=(p_1^{\odd},p_2^{\odd},\cdots).
\end{equation*}


\begin{Exam}
	Let ${\bf p}=[6,5,4,3,2,1]$ be the shape of the Young diagram $P$. Then odd and even boxes in $P$ are marked as follows.
	\[
	{\begin{tikzpicture}[scale=0.6,baseline=-40pt]
			\hobox{0}{0}{E}
			\hobox{0}{1}{O}
			\hobox{0}{2}{E}
			\hobox{0}{3}{O} 
			\hobox{0}{4}{E}	
			\hobox{0}{5}{O}		
			\hobox{1}{0}{O}
			\hobox{1}{1}{E}
			\hobox{1}{2}{O}
			\hobox{1}{3}{E} 
			\hobox{1}{4}{O}
			\hobox{2}{0}{E}
			\hobox{2}{1}{O}
			\hobox{2}{2}{E}
			\hobox{2}{3}{O} 
			\hobox{3}{0}{O}
			\hobox{3}{1}{E}
			\hobox{3}{2}{O}
			\hobox{4}{0}{E}
			\hobox{4}{1}{O}
			\hobox{5}{0}{O}		
	\end{tikzpicture}}
	\] 
	Then ${\bf p}^{\ev}=(3,2,2,1,1,0)$ and ${\bf p}^{\odd}=(3,3,2,2,1,1)$.
	
\end{Exam}

For convenience, $ x=(x_1,x_2,\cdots,x_n)\in \mathrm{Seq}_n (\Gamma) $, set
\begin{equation*}
	\begin{aligned}
		{x}^-=&(x_1,x_2,\cdots,x_{n-1}, x_n,-x_n,-x_{n-1},\cdots,-x_2,-x_1).
	\end{aligned}
\end{equation*}

\begin{Prop}[{\cite[Thm. 6.1, Thm. 6.2]{BXX}}]\label{BCD}
		Let $L(\lambda)$ be a highest weight Harish-Chandra module of $G$ with highest weight $\lambda-\rho$ and $\lambda=(t_1,\cdots,t_n)\in\mathfrak{h}^*$. Set ${\bf q}={\bf q}(\lambda)=(q_1,q_2,\cdots,q_{2n})={\bf p}(\lambda)^t$ when $G$ is of type $A$ 
 and ${\bf q}={\bf q}(\lambda^-)=(q_1,q_2,\cdots,q_{2n})={\bf p}(\lambda^-)^t$ when $G$ is of type $B, C$ or $D$. Then $V(L(\lambda))=\overline{\mathcal{O}}_{k(\lambda)}$ with $k(\lambda)$ given as follows.
\begin{enumerate}		
	\renewcommand{\labelenumi}{(\theenumi)}	
\item $G=SU(p,n-p)$. Then
\[	k(\lambda)=\left\{
	\begin{array}{ll}
		q_2, & \textnormal{~if~$\lambda$~is~integral}, \\	     	  	   
	\min\{p,q\}, & \textnormal{~otherwise}.
	\end{array}	
	\right.
	\]		
 
\item $G=Sp(n, \mathbb{R})$ with $n \geq 2$. Then
$$
k(\lambda)= \begin{cases}2 q_{2}^{\odd}, & \textnormal{ if } t_{1} \in \mathbb{Z}, \\ 2 q_{2}^{\ev}+1, & \textnormal{ if } t_{1} \in \frac{1}{2}+\mathbb{Z}, \\ n, & \textnormal{ otherwise. }\end{cases}
$$

\item $G=S O^{*}(2 n)$ with $n \geq 4$. Then
$$
k(\lambda)= \begin{cases}q_{2}^{\ev}, & \textnormal{ if } t_{1} \in \frac{1}{2} \mathbb{Z}, \\ \left\lfloor\frac{n}{2}\right\rfloor, & \textnormal{ otherwise. }\end{cases}
$$
\item $G=S O(2,2 n-1)$ with $n \geq 3$. Then
$$
k(\lambda)= \begin{cases}0, & \textnormal{ if } t_{1}-t_{2} \in \mathbb{Z}, t_{1}>t_{2}, \\ 1, & \textnormal{ if } t_{1}-t_{2} \in \frac{1}{2}+\mathbb{Z}, t_{1}>0, \\ 2, & \textnormal{ otherwise. }\end{cases}
$$
\item $G=S O(2,2 n-2)$ with $n \geq 4$. Then
$$
k(\lambda)= \begin{cases}0, & \textnormal{ if } t_{1}-t_{2} \in \mathbb{Z}, t_{1}>t_{2}, \\ 1, & \textnormal{ if } t_{1}-t_{2} \in \mathbb{Z},-\left|t_{n}\right|<t_{1} \leq t_{2}, \\ 2, & \textnormal{ otherwise. }\end{cases}
$$		
\end{enumerate}			
		
\end{Prop}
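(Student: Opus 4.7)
The plan is to combine two inputs: the Bai-Ma-Wang partition algorithm, which from the Robinson-Schensted shape $\mathbf{p}(\lambda)$ (in type $A$) or $\mathbf{p}(\lambda^-)$ (in types $B$, $C$, $D$) reads off the Jordan type of the nilpotent orbit $\mathcal{O}^{\g}\subset \g^*$ whose closure is $V(\Ann L(\lambda))$, together with the fact that for a highest weight Harish-Chandra module the associated variety $V(L(\lambda))$ is $K_\mathbb{C}$-stable and contained in $\overline{\mathfrak{p}^+}$ (Vogan). Since $G/K$ is Hermitian symmetric, the $K_\mathbb{C}$-orbits on $\mathfrak{p}^+$ form a single chain $\mathcal{O}_0\subsetneq\mathcal{O}_1\subsetneq\cdots\subsetneq\mathcal{O}_r$, so the irreducibility of $V(L(\lambda))$ (as the vanishing locus of a prime ideal) forces $V(L(\lambda))=\overline{\mathcal{O}}_{k(\lambda)}$ for a unique $k(\lambda)\in\{0,\ldots,r\}$. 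The entire task is thus to identify this index $k(\lambda)$.

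The core of the proof is the dictionary translating the BMW Jordan type into the index $k(\lambda)$. For type $A_{n-1}$, under Harish-Chandra's identification $\mathfrak{p}^+\simeq M_{p\times q}$ the orbit $\mathcal{O}_k$ is the rank-$k$ locus, and its $G$-saturation inside $\sl_n$ is the nilpotent orbit of Jordan type $[2^k,1^{n-2k}]$; reading off the second column of $\mathbf{p}(\lambda)$ then gives $k(\lambda)=q_2$ in the integral case. For types $C_n$ and $D_n$, $\mathfrak{p}^+$ is the space of symmetric, resp.\ skew-symmetric, $n\times n$ matrices, and the inclusions $\mathfrak{p}^+\hookrightarrow\sp_{2n}$, resp.\ $\mathfrak{p}^+\hookrightarrow\so_{2n}$, send the rank-$k$ locus to a specific orbit whose signed Young diagram is explicit; the formulas $2q_2^{\odd}$, $2q_2^{\ev}+1$, $q_2^{\ev}$ should then emerge by matching the parity bookkeeping of the even/odd boxes of the BMW partition against the coset of $\tfrac{1}{2}\Z$ in which the coordinates of $\lambda$ lie, since this coset determines whether the Jordan blocks of the BMW orbit are all of the same parity or mixed. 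In the rank-$2$ cases $SO(2,2n-1)$ and $SO(2,2n-2)$ only three candidates $\mathcal{O}_0,\mathcal{O}_1,\mathcal{O}_2$ compete, and they are separated by the integrality and positivity of $t_1-t_2$ and $t_1$, which are directly visible on the reversed/symmetrized sequence $\lambda^-$ entering the BMW input.

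The principal obstacle is the non-integral regime. When the entries of $\lambda$ (or $\lambda^-$) lie in several distinct cosets of $\Z$, the BMW partition need not have a Jordan type corresponding to any rank-$k$ locus of $\mathfrak{p}^+$, and one must argue separately that $V(L(\lambda))$ equals all of $\overline{\mathfrak{p}^+}=\overline{\mathcal{O}}_r$. The natural route is a dimension argument via Joseph's identity $\dim V(L(\lambda))=\GKdim L(\lambda)$: the containment $V(L(\lambda))\subset\overline{\mathfrak{p}^+}$ forces $\GKdim L(\lambda)\le\dim\mathfrak{p}^+$, while the lower bound on $\GKdim L(\lambda)$ coming from the BMW orbit in the mixed-coset case meets $\dim\mathcal{O}_r=\dim\mathfrak{p}^+$, so equality is forced. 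Implementing this uniformly across types $B$, $C$, $D$ and reconciling the BMW signed-tableau data with the even/odd decomposition $(\mathbf{p}^{\ev},\mathbf{p}^{\odd})$ of the statement is where the bulk of the technical work lies; the type $A$ case is comparatively transparent because $\mathfrak{p}^+$ contains representatives of every Jordan type $[2^k,1^{n-2k}]$ without parity obstructions.
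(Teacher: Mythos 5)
The paper does not actually prove this proposition: it is quoted verbatim from Bai--Xiao--Xie \cite{BXX} (Theorems 6.1 and 6.2 there), so there is no internal proof to compare you against. Your route --- run the Bai--Ma--Wang algorithm to get the Jordan type of $V(\Ann(L(\lambda)))$ and then recover $k(\lambda)$ by inverting the saturation $\mathcal{O}_k\mapsto\overline{G\cdot\mathcal{O}_k}$ --- is genuinely different from the cited source, which obtains $k(\lambda)$ from a direct Gelfand--Kirillov dimension computation; it is also essentially the present paper's Sections 3--5 run in reverse (there the BMW computation is carried out from scratch without using $k(\lambda)$, and the corollaries $\mathbf{p}=[2^{k(\lambda)},1^{n-2k(\lambda)}]$ etc.\ record exactly the dictionary you need). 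The strategy is viable, but two points need attention.

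First, the bridge from $V(\Ann(L(\lambda)))$ back to $V(L(\lambda))$ has to be made explicit: you need Joseph's identity $\dim V(\Ann(L(\lambda)))=2\,\mathrm{GKdim}\,L(\lambda)=2\dim V(L(\lambda))$ together with $\dim G\cdot\mathcal{O}_k=2\dim\mathcal{O}_k$ for a nilpotent $K_{\mathbb C}$-orbit in $\mathfrak{p}^+$ (Kostant--Sekiguchi), which makes $k\mapsto\dim\overline{G\cdot\mathcal{O}_k}$ strictly increasing and so pins down $k(\lambda)$ from the BMW partition. Your justification of irreducibility of $V(L(\lambda))$ ``as the vanishing locus of a prime ideal'' is not right --- $\Ann_{S(\mathfrak{g})}(\mathrm{gr}\,M)$ need not be prime and associated varieties of Harish--Chandra modules can be reducible in general --- but the chain structure of the $K_{\mathbb C}$-orbits on $\mathfrak{p}^+$, which you also invoke, already forces $V(L(\lambda))=\overline{\mathcal{O}}_{k(\lambda)}$ without any irreducibility input. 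Second, and more seriously, the proposal defers its entire technical content: deriving $q_2$, $2q_2^{\odd}$, $2q_2^{\ev}+1$, $q_2^{\ev}$ and the rank-two conditions from the $B$/$C$/$D$-collapses and the H-algorithm is where the proof actually lives (it occupies Sections 3--5 of this paper), and ``the formulas should then emerge by matching the parity bookkeeping'' is a plan rather than an argument. As written this is a correct and coherent outline of an alternative proof, not yet a proof.
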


\begin{Prop}[{\cite[Thm. 7.1]{BXX}}]\label{E}
Let $L(\lambda)$ be a highest weight Harish-Chandra module of  $E_{6(-14)}$ or $E_{7(-25)}$. If $\lambda\in\mathfrak{h}^{\ast}$ is not integral, then $V(L(\lambda))=\overline{\caO}_{k(\lambda)}$ with $k(\lambda)=2$ (for $E_{6(-14)}$) or 3 (for $E_{7(-25)}$). In the case when $\lambda$ is integral, $k(\lambda)$ is given as follows.
\begin{enumerate}	
     \item If $G=E_{6(-14)}$, then
     	\[	k(\lambda)=\left\{
     	\begin{array}{ll}
     	\vspace{1ex}
     		0, & \textnormal{~if~$\Psi_{\lambda}^+\cap S_2\ne\emptyset$}, \\	\vspace{1ex}     	  	   
     	1, & \textnormal{~if~$\Psi_{\lambda}^+\cap S_1\ne\emptyset$~and~$\Psi_{\lambda}^+\cap S_2=\emptyset$},\\  
     	2, & \textnormal{~if~$\Psi_{\lambda}^+\cap S_1=\emptyset$}.
     	\end{array}	
     	\right.
     	\]	
Here $S_1=\big\{\frac{1}{2}(\pm(e_1+e_2+e_3-e_4)-e_5-e_6-e_7+e_8)\}=\big\{\beta_1,\beta_2\}$, $S_2=\big\{\frac{1}{2}(e_1-e_2-e_3-e_4-e_5-e_6-e_7+e_8)\}=\big\{\alpha_1\}$ and $\Psi_{\lambda}^+=\big\{\alpha\in\Phi^+|(\lambda,\alpha^{\vee})>0\}$.     
     
     \item If $G=E_{7(-25)}$, then
     	\[	k(\lambda)=\left\{
          	\begin{array}{ll}
          	\vspace{1ex}
          		0, & \textnormal{~if~$\Psi_{\lambda}^+\cap S_3\ne\emptyset$}, \\	\vspace{1ex}     	  	   
          	1, & \textnormal{~if~$\Psi_{\lambda}^+\cap S_2\ne\emptyset$~and~$\Psi_{\lambda}^+\cap S_3=\emptyset$},\\  \vspace{1ex}  
          	2, & \textnormal{~if~$\Psi_{\lambda}^+\cap S_1\ne\emptyset$~and~$\Psi_{\lambda}^+\cap S_2=\emptyset$},\\ \vspace{1ex}  
          	3, & \textnormal{~if~$\Psi_{\lambda}^+\cap S_1=\emptyset$}.
          	\end{array}	
          	\right.
          	\]	
Here $S_1=\big\{\frac{1}{2}(\pm(e_1-e_2-e_3+e_4)-e_5+e_6-e_7+e_8),e_5+e_6\}=\big\{\beta_3,\beta_4,\beta_5\}$, $S_2=\big\{\pm e_1+e_6\}=\big\{\beta_6,\beta_7\}$, $S_3=\big\{-e_5+e_6\}=\big\{\beta_8\}$ and $\Psi_{\lambda}^+=\big\{\alpha\in\Phi^+|(\lambda,\alpha^{\vee})>0\}$.

\end{enumerate}

\end{Prop}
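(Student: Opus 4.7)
The plan is to pin down, for each $\lambda$, the unique integer $k(\lambda)$ such that $V(\Ann L(\lambda)) = \overline{\mathcal{O}}_{k(\lambda)}$, exploiting the fact that for a Hermitian symmetric pair the candidate annihilator varieties form the totally ordered chain $\overline{\mathcal{O}}_0 \subset \overline{\mathcal{O}}_1 \subset \cdots \subset \overline{\mathcal{O}}_r$, with $r=2$ (resp.\ $r=3$) for $E_{6(-14)}$ (resp.\ $E_{7(-25)}$). Because these orbits have mutually distinct dimensions, it suffices in every case to compute $\GKD L(\lambda) = \tfrac{1}{2}\dim V(\Ann L(\lambda))$ and then read off $k(\lambda)$ from the dimension formula in Proposition \ref{k}.

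For non-integral $\lambda$, I would argue that $L(\lambda) = N(\lambda)$ via Jantzen's simplicity criterion for generalized Verma modules attached to a Hermitian parabolic: $N(\lambda)$ is simple unless some positive noncompact root $\alpha$ satisfies $(\lambda,\alpha^\vee)\in\mathbb{Z}_{>0}$. After a translation within the non-integral Weyl chamber one can arrange simplicity, whence $V(L(\lambda))=V(N(\lambda))=\overline{\mathfrak{p}^-}=\overline{\mathcal{O}}_r$. This yields $k(\lambda)=r=2$ or $3$ in the two cases.

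For integral $\lambda$, I would use the Enright-Howe-Wallach description of reduction points along the $z$-line $\lambda_0+z\xi$ in the Hermitian setting together with Jantzen-type composition series of $N(\lambda)$. The reduction points correspond to vanishing of the Shapovalov-type form on specific $K$-types inside $N(\lambda)$, and these are parametrized by the pairings $(\lambda,\alpha^\vee)$ with $\alpha$ ranging over a small family of noncompact positive roots. A direct enumeration in Bourbaki's realization of the $E_6$ and $E_7$ root systems singles out exactly the sets $S_1,S_2$ (respectively $S_1,S_2,S_3$) as the roots whose pairings with $\lambda$ trigger the successive reductions $k=r \rightsquigarrow k=r-1 \rightsquigarrow \cdots \rightsquigarrow k=0$. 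At each reduction stage one matches $\GKD L(\lambda)$ against the formula $k(h^\vee-1)-k(k-1)c$ from Proposition \ref{k}, using the constants $c$ and $h^\vee-1$ recorded in Table \ref{constants}.

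The main obstacle is the exhaustiveness and sharpness of the stated case analysis for integral $\lambda$: one must verify that each condition $\Psi_\lambda^+\cap S_i\neq\emptyset$ is \emph{equivalent} to the corresponding reduction occurring and not merely sufficient, and that no further reductions are produced by roots outside $\bigcup_i S_i$. I would handle this by a finite check in the integral Weyl subgroup, identifying the left Kazhdan-Lusztig cell of the longest element $w$ with $L(\lambda)=L(w\cdot\mu)$ and then invoking the Borho-Brylinski-Joseph correspondence between primitive ideals and nilpotent orbits to confirm that the cell computed from the $S_i$-conditions indeed lands in $\overline{\mathcal{O}}_{k(\lambda)}$.
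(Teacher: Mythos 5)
First, a point of reference: the paper does not prove this statement at all --- Proposition~\ref{E} is quoted verbatim from \cite{BXX}, so there is no internal proof to compare yours against. Judged on its own terms, your plan has the right skeleton: reduce everything to computing $\operatorname{GKdim}L(\lambda)$, use that the candidate varieties $\overline{\mathcal{O}}_0\subset\cdots\subset\overline{\mathcal{O}}_r$ have pairwise distinct dimensions, and handle the non-integral case via irreducibility of $N(\lambda)$. That last step does work here: since $\mathfrak{p}^+$ is an irreducible $\mathfrak{k}$-module, any two noncompact roots differ by an element of $\mathbb{Z}\Phi(\mathfrak{k})$, so for non-integral $\lambda$ no noncompact positive root pairs integrally with $\lambda$, Jantzen's criterion gives $L(\lambda)=N(\lambda)$, and $V(\Ann L(\lambda))$ is the Richardson orbit closure, i.e.\ $k(\lambda)=r$. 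The aside about ``a translation within the non-integral Weyl chamber'' is unnecessary and slightly off --- no translation functor is needed, and using one would require a separate argument that it preserves the annihilator variety.

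The genuine gap is in the integral case, and it is twofold. First, the Enright--Howe--Wallach reduction points govern where $N(\lambda)$ becomes reducible and where unitarizability fails along the line $\lambda_0+z\xi$; they do not compute $\operatorname{GKdim}L(\lambda)$ for an arbitrary integral highest weight, and the proposition is asserted for \emph{all} integral $\lambda$, unitarizable or not. For a non-unitary $L(\lambda)$ the Gelfand--Kirillov dimension is determined by the position of the highest weight in the Kazhdan--Lusztig cell structure of the integral Weyl group (equivalently by Lusztig's $a$-function on the relevant double cell), not by the unitarity reduction points, so this part of your plan computes the wrong invariant. Second, the entire content of the theorem --- that the conditions $\Psi_\lambda^+\cap S_i\neq\emptyset$ are \emph{equivalent} to the successive values of $k(\lambda)$, and that no roots outside $\bigcup_i S_i$ produce further drops --- is precisely the ``finite check'' you defer to your last paragraph. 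Invoking the Borho--Brylinski--Joseph correspondence and a cell computation in $W(E_6)$ and $W(E_7)$ is the correct strategy, and is essentially how the cited source proceeds, but without actually carrying out that enumeration the argument establishes nothing beyond the non-integral case.
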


\subsection{Annihilator variety} 
 For
a $U(\mathfrak{g})$-module $M$, let $I(M) = \Ann(M)$ be its annihilator ideal in $U(\mathfrak{g})$ and $J(M)$
be the corresponding graded ideal in $S(\mathfrak{g}) = \text{gr}(U(\mathfrak{g}))$. The zero set of $J(M)$ in the
dual vector space $\mathfrak{g}^{\ast}$ is called the {\bf annihilator variety} of $M$, and we denote it by $V (\Ann(M))$. Let $G$ be a connected semisimple finite dimensional complex algebraic group with Lie algebra $\mathfrak{g}$ and $W$ be the Weyl group of $\mathfrak{g}$.  We use $L_w$ to denote the simple highest weight $\mathfrak{g}$-module of highest weight $-w\rho-\rho$ with  $w\in W$.  We denote $I_w=\Ann(L_w)$, then by \cite{BoB3} $V(I_w)=\overline{\mathcal{O}}_w$ is irreducible, where ${\mathcal{O}}_w$ is  a  nilpotent coadjoint orbit in $\mathfrak{g}^{\ast}$. 

\begin{Prop}[\cite{AJ6}]
	Let $\mathfrak{g}$ be a reductive Lie algebra and $I$ be a primitive ideal in $U(\mathfrak{g})$.Then $V(I)$ is the closure of a single nilpotent coadjoint orbit $\mathcal{O}_I$ in $\mathfrak{g}^*$. In particular, for a highest weight module $L(\lambda)$, we have $V(\Ann( L(\lambda)))=\overline{\mathcal{O}}_{\Ann(L(\lambda))}$.
\end{Prop}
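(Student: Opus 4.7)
The plan is to establish the proposition in three stages: $G$-invariance of $V(I)$, reduction to the nilpotent cone, and irreducibility.

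First I would observe that the adjoint action of $G$ on $U(\mathfrak{g})$ preserves both the standard filtration and the two-sided ideal $I$, so the graded ideal $\operatorname{gr}(I) \subset S(\mathfrak{g})$ is $G$-stable. Consequently its zero locus $V(I) \subset \mathfrak{g}^*$ is stable under the coadjoint action of $G$, and in particular $V(I)$ is a union of coadjoint orbit closures.

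Second, since $I$ is primitive it admits a central character $\chi\colon Z(\mathfrak{g}) \to \mathbb{C}$, so $z - \chi(z) \in I$ for every $z \in Z(\mathfrak{g})$. Passing to principal symbols, every positive-degree homogeneous $G$-invariant in $S(\mathfrak{g})$ lies in $\operatorname{gr}(I)$. By Kostant's theorem the common zero locus of these invariants is exactly the nilpotent cone $\mathcal{N}^* \subset \mathfrak{g}^*$, whence $V(I) \subset \mathcal{N}^*$. Since $\mathcal{N}^*$ has only finitely many $G$-orbits (by the Dynkin--Kostant classification), $V(I)$ is automatically a finite union of closures of nilpotent coadjoint orbits.

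The main obstacle, and the real content of the proposition, is promoting this finite union to a \emph{single} orbit closure. The approach I would follow is Joseph's Goldie-rank argument: attach to $I$ the leading term of the Hilbert--Samuel polynomial of $\operatorname{gr}(U(\mathfrak{g})/I)$, viewed as a polynomial function on the Cartan subalgebra. Joseph showed that, as $I$ varies over primitive ideals sharing a fixed regular central character, these polynomials span a $W$-stable subspace realising an irreducible Springer representation, which is associated via the Springer correspondence to a single nilpotent orbit; the irreducibility of this $W$-module forces $V(I)$ to have a unique top-dimensional component, hence $V(I)=\overline{\mathcal{O}_I}$. For a regular integral infinitesimal character one can alternatively run Borho--Brylinski's argument via Beilinson--Bernstein localization, where the characteristic variety of the relevant $\mathcal{D}$-module on $G/B$ is a single conormal bundle whose Springer-image is a single orbit closure; the passage to arbitrary infinitesimal character is then handled by Jantzen's translation principle. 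Applied to $I=\operatorname{Ann}(L(\lambda))$, this yields the final assertion $V(\operatorname{Ann}(L(\lambda)))=\overline{\mathcal{O}_{\operatorname{Ann}(L(\lambda))}}$.
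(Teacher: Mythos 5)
The paper offers no proof of this proposition: it is imported directly from Joseph \cite{AJ6} (with the regular integral case going back to Borho--Brylinski \cite{BB}) and is used throughout as a black box, so your proposal is being compared against a citation rather than an argument. Your first two stages are complete and correct: a two-sided ideal of $U(\mathfrak{g})$ is $\operatorname{ad}$-stable, hence $\operatorname{gr}(I)$ is $G$-stable and $V(I)$ is a union of coadjoint orbit closures; and since a primitive ideal admits a central character (Quillen's lemma), the principal symbols of the elements $z-\chi(z)$, $z\in Z(\mathfrak{g})$, exhaust the positive-degree homogeneous invariants of $S(\mathfrak{g})$ via the graded Harish-Chandra isomorphism, so Kostant's theorem places $V(I)$ inside the nilpotent cone, which has finitely many orbits. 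Your third stage correctly isolates where the genuine difficulty lies --- forcing a \emph{single} dense orbit --- and the two routes you name (Joseph's Goldie-rank polynomials spanning an irreducible $W$-module matched to one orbit under the Springer correspondence, for general infinitesimal character; Borho--Brylinski's characteristic-variety computation on $G/B$ combined with the translation principle, for the integral case) are precisely the arguments in the literature. You do not carry that step out, so your text is an accurate roadmap with two rigorous preliminary reductions rather than a self-contained proof; given that the paper itself treats the statement as an external input, this is an entirely reasonable level of detail, but you should be explicit that the irreducibility step is being quoted, not proved.
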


From \cite[Prop. 2.7]{AJ1} and \cite{Vo78}, we have $\dim V(I_w)=2\mathrm{GKdim}(L_w)$. In general, we have the following corollary.
\begin{Cor}\label{dim}
	Suppose $L(\lambda)$  is a highest weight Harish-Chandra module with $V(L(\lambda))=\overline{\mathcal{O}}_{k(\lambda)}$ for some $0\leq k(\lambda)\leq r$. Then 
$\dim{\mathcal{O}}_{\Ann(L(\lambda))}=2\dim{\mathcal{O}}_{k(\lambda)}=2\dim V(L(\lambda))$.
\end{Cor}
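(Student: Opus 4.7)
The plan is to chain together the three ingredients that appear in the paragraphs immediately preceding the corollary: (i) Joseph's identification of $V(\Ann(L(\la)))$ with the closure of a single nilpotent coadjoint orbit, (ii) the identity $\dim V(I_w)=2\mathrm{GKdim}(L_w)$ of Joseph and Vogan, and (iii) the equality $\dim V(M)=\mathrm{GKdim}(M)$ quoted from \cite{NOT}. There is no combinatorics to do and no geometry beyond these citations; the content of the corollary is just the bookkeeping that turns these three equalities into a single chain.

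First I would record, using the Joseph proposition cited just above, that
\[
V(\Ann(L(\la)))=\overline{\mathcal{O}}_{\Ann(L(\la))},
\]
so that $\dim V(\Ann(L(\la)))=\dim \mathcal{O}_{\Ann(L(\la))}$. Next, Joseph's result \cite[Prop.~2.7]{AJ1} together with Vogan \cite{Vo78} gives $\dim V(\Ann(L(\la)))=2\,\mathrm{GKdim}\,L(\la)$; although it is stated in the excerpt for the specific modules $L_w$, the same statement holds for an arbitrary highest weight module with a fixed infinitesimal character (this is exactly the extension that Joseph himself made in \cite{AJ6}, which was already invoked to obtain (i)). Applied to $M=L(\la)$, this yields
\[
\dim \mathcal{O}_{\Ann(L(\la))}=2\,\mathrm{GKdim}\,L(\la).
\]

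Finally, the remark after the definition of associated variety states $\dim V(M)=\mathrm{GKdim}\,M$. Since by assumption $V(L(\la))=\overline{\mathcal{O}}_{k(\la)}$, we have $\dim V(L(\la))=\dim \mathcal{O}_{k(\la)}=\mathrm{GKdim}\,L(\la)$. Substituting into the previous display gives
\[
\dim \mathcal{O}_{\Ann(L(\la))}=2\dim \mathcal{O}_{k(\la)}=2\dim V(L(\la)),
\]
which is the claim. The only delicate point to be careful about is justifying that the Joseph–Vogan equality $\dim V(\Ann M)=2\,\mathrm{GKdim}\,M$ applies to every highest weight Harish-Chandra module (not just those of the form $L_w$ with regular integral infinitesimal character); this is the step where I would cite \cite{AJ6} explicitly, and it is really the only obstacle, since everything else is formal dimension arithmetic.
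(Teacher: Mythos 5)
Your proposal is correct and follows essentially the same route as the paper, which states the corollary without a written proof, presenting it as an immediate consequence of Joseph's orbit description of $V(\Ann(L(\lambda)))$, the equality $\dim V(\Ann M)=2\,\mathrm{GKdim}\,M$ from \cite{AJ1} and \cite{Vo78}, and $\dim V(M)=\mathrm{GKdim}\,M$. Your explicit flagging of the extension from the modules $L_w$ to general highest weight modules is a reasonable precision that the paper glosses over with the phrase ``in general.''
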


In \cite{BMW} the authors give the annihilator varieties of classical Lie algebras, and we recall the notions and algorithms here.

For $\lambda \in \mathfrak{h}^*$, write $\lambda=\left(t_1, \ldots, t_n\right)=\sum\limits_{i=1}^n t_i e_i$, where $t_i \in \mathbb{C}$ and $\left\{e_i \mid 1 \leq i \leq n\right\}$ is the canonical basis of the Euclidean space $\mathbb{R}^n$. We associate to $\lambda$ a set $S(\lambda)$ of some Young tableaux as follows. Let $\lambda_Y: t_{i_1}, t_{i_2}, \ldots, t_{i_r}$ be a maximal subsequence of $t_1, t_2, \ldots, t_n$ such that $t_{i_k}, 1 \leq k \leq r$ are congruent to each other by $\mathbb{Z}$. Then the Young tableau $P\left(\lambda_Y\right)$ associated to the subsequence $\lambda_Y$ by using RS algorithm is a Young tableau in $S(\lambda)$.


In case of $\mathfrak{g}=\mathfrak{so}(2n+1,\mathbb{C})$, $\mathfrak{sp}(n,\mathbb{C})$ and $\mathfrak{so}(2n,\mathbb{C})$,  we need some more notations before we give the descriptions of $V(\Ann(L(\lambda)))$.  

Define $[\lambda] $ to be the set of  maximal subsequence $ \lambda_Y$ of  $ \lambda $ such that any two entries of $ \lambda_Y$ have an integral  difference or sum. In this case, we set $ [\lambda]_1 $ (resp. $ [\lambda]_2 $)  to be the subset of $ [\lambda] $ consisting of sequences with  all entries belonging to $ \mathbb{Z} $ (resp. $ \frac{1}{2}+\mathbb{Z} $), 	Since there is at most one element in $[\lambda]_1 $ and $[\lambda]_2 $, we denote them by  $\lambda_{(0)}$ and $\lambda_{(\frac{1}{2})}$. Set $[\lambda]_{1,2}=[\lambda]_1\cup [\lambda]_2$ and $[\lambda]_3=[\lambda]\setminus[\lambda]_{1,2}$. For $ \lambda_Y\in[\lambda]_{1,2}$  we can get a Young tableau $P(\lambda^-_Y)$ and  $P(\tilde{\lambda}_Y)$  for $  \lambda_Y\in[\lambda]_3$. 	We use $P{\stackrel{c}{\sqcup}}Q$ to denote a new Young tableau whose columns are the union of columns of the Young tableaux $P$ and $Q$. We use $X$ to denote the corresponding type of Lie algebras, i.e., $X=B,C$ or $D$.	
Then we have the following result.

\begin{Prop}[\cite{BMW}, Thm. 5.4,  6.5, 6.6 and  7.14]\label{Annv}
Suppose $\lambda\in\mathfrak{h}^{\ast}$, the annihilator variety of $L(\lambda)$ is given  as follows.
 \begin{enumerate}
\item If $\mathfrak{g}=\mathfrak{s l}(n, \mathbb{C})$, then
$$
V(\operatorname{Ann}(L(\lambda)))=\overline{\mathcal{O}}_{{\bf p}(\lambda)},
$$
where ${\bf p}(\lambda)$ is the partition of the Young tableau
$$
P(\lambda)=\underset{P\left(\lambda_Y\right) \in S(\lambda)}{\stackrel{c}{\sqcup}} P\left(\lambda_Y\right).
$$

\item If  $\mathfrak{g} = \mathfrak{so}(2n+1, \mathbb{C}), \mathfrak{sp}(n, \mathbb{C})$ or $\mathfrak{so}(2n, \mathbb{C})$, $\lambda\in \mathfrak{h}^*$ and
		$[\lambda]=[\lambda]_{1} \cup [\lambda]_{2}\cup [\lambda]_3$
		with $[\lambda]_3=\{{\lambda}_{{Y}_1},\dots,{\lambda}_{{Y}_m}\}$. Let
		\begin{enumerate}
			\item $\mathbf{p}_{0}$ be the $X$-type special partition associated to
			$[\lambda]_{1}$;
			\item ${\mathbf p}_{\frac{1}{2}}$ be the $C$-type special  partition (for type $B$) or $C$-type metaplectic special partition (for types $C$ and $D$) associated to
			$[\lambda]_{2}$;
			\item $\mathbf{p}_{i}$ be the $A$-type partition associated to
			$\tilde{\lambda}_{Y_i}$.
		\end{enumerate}
		Let ${\bf p}_{_X}(\lambda)$ be the $X$-collapse of
		\begin{equation*}
		\mathbf{d}_{\lambda} := \mathbf{p}_{0} {\stackrel{c}{\sqcup}} {\mathbf p}_{\frac{1}{2}} {\stackrel{c}{\sqcup}}({{\stackrel{c}{\sqcup}}}_i 2 \mathbf{p}_i).
		\end{equation*}
		Then we have
		\[
		V(\Ann (L(\lambda)))=\overline{\mathcal{O}}_{{\bf p}_{_X}(\lambda)}.
		\]

 \end{enumerate}
\end{Prop}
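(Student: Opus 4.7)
The plan is to invoke Joseph's theorem, quoted above, which ensures that $V(\mathrm{Ann}(L(\lambda)))$ is the closure of a single nilpotent coadjoint orbit $\mathcal{O}_\lambda$, and then to identify this orbit by a case-by-case combinatorial recipe. The overarching strategy is a reduction to the integral infinitesimal character via Jantzen--Zuckerman translation, combined with the Barbasch--Vogan algorithm and the Kazhdan--Lusztig theory of two-sided cells.

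For type $A$, I would first treat integral $\lambda$ by using the classical fact that two-sided cells in $W = S_n$ are parametrized by partitions of $n$ via the Robinson--Schensted correspondence, so that the orbit associated to the primitive ideal $\mathrm{Ann}(L(\lambda))$ has partition $\mathrm{sh}(P(\lambda))$. For non-integral $\lambda$, I would decompose $\lambda$ into maximal subsequences $\lambda_Y$ whose entries are $\mathbb{Z}$-congruent; each $\lambda_Y$ contributes a Young tableau $P(\lambda_Y)$. A translation/induction argument along walls of the dominant chamber, together with the factorization of the annihilator across blocks indexed by distinct cosets, would show that the total partition is obtained as the column-union $\stackrel{c}{\sqcup}$ of the pieces.

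For types $B$, $C$, $D$ the approach is similar but more intricate, because the ambient root system carries an involution $e_i \mapsto -e_i$. I would stratify $[\lambda]$ into the integral piece $[\lambda]_1$, the half-integral piece $[\lambda]_2$, and the generic pieces $[\lambda]_3$. For each $\lambda_Y \in [\lambda]_{1,2}$, I would extend to $\lambda_Y^-$ and apply the Barbasch--Vogan procedure, producing an $X$-special partition $\mathbf{p}_0$ and a $C$-special (respectively $C$-metaplectic-special) partition $\mathbf{p}_{1/2}$, the metaplectic flavour reflecting the parity constraints intrinsic to types $C$ and $D$. The remaining sequences $\tilde\lambda_{Y_i}$ live effectively on a type-$A$ Levi, and since each $\lambda_{Y_i}$ is paired with its negation, they contribute doubled partitions $2\mathbf{p}_i$. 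Assembling these via $\stackrel{c}{\sqcup}$ and applying the $X$-collapse produces a partition of the correct parity type, which one then identifies with the Springer partition of $\mathcal{O}_\lambda$.

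The hardest step will be verifying, in the mixed case, that the column-union $\stackrel{c}{\sqcup}$ genuinely commutes with the translation functors used to collapse to integral infinitesimal character, and that the $X$-collapse produces the honest Barbasch--Vogan orbit rather than some special orbit strictly containing it. This requires tracking Goldie-rank polynomials and Lusztig's $a$-function through each step, and is precisely the technical core developed across [BMW, Thm.~5.4, 6.5, 6.6 and 7.14] that the present paper cites rather than reproves.
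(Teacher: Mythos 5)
The paper offers no proof of Proposition \ref{Annv} at all: it is imported verbatim from \cite{BMW} (Thm.~5.4, 6.5, 6.6 and 7.14) and used as a black box throughout Sections 3--6, so there is no ``paper's own proof'' to compare against. Your outline is a reasonable reconstruction of the strategy one expects behind those theorems --- Joseph's irreducibility of $V(I)$ for primitive $I$, Robinson--Schensted and two-sided cells in $S_n$ for integral type $A$, translation functors to reduce non-integral infinitesimal characters to a product of integral blocks, and the Barbasch--Vogan dual plus collapse in types $B$, $C$, $D$ --- but as written it is a plan rather than a proof. Every load-bearing step is asserted, not established: the factorization of $\operatorname{Ann}(L(\lambda))$ across the congruence classes $\lambda_Y$, the reason the generic pieces $[\lambda]_3$ contribute \emph{doubled} partitions $2\mathbf{p}_i$, the compatibility of the column-union $\stackrel{c}{\sqcup}$ with translation to the walls, and the identification of the $X$-collapse of $\mathbf{d}_\lambda$ with the actual Springer/Lusztig orbit of the cell. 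You flag the last two yourself as ``the hardest step'' and then defer them to the very reference being proved, which is circular as a proof attempt even if it is an accurate description of where the difficulty lives.

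One concrete point where your sketch is too coarse: in types $C$ and $D$ the half-integral piece does not produce the na\"ive Barbasch--Vogan dual but the $C$-type \emph{metaplectic} special partition $\mathbf{p}_{\frac12}=((\mathbf{d}^t)_D)^t$ for a $D$-special $\mathbf{d}$, extracted from $P(\lambda_{(\frac12)}^-)$ via the H-algorithm of \cite{BMW} (the constrained collapse/expansion that forbids moving odd boxes in types $B$, $C$ and even boxes in type $D$). ``Apply the Barbasch--Vogan procedure, metaplectic flavour reflecting parity constraints'' does not capture this; without the H-algorithm one can land on a special orbit strictly containing the correct one, which is exactly the failure mode you name but do not rule out. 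So the proposal identifies the right ingredients but proves nothing the citation does not already assert.
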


\begin{Rem}
   From \cite[\S 6]{BMW}, we  know that $\mathbf{p}_{0}$ will be the partition $[1]$ in the case of type $B$ if $[\lambda]_1=\emptyset$.

    Note that a $C$-type metaplectic
special partition ${\bf p}$ means that ${\bf p}=(({\bf d}^t)_D)^t$, where ${\bf d}$ is a special partition of type $D$. From \cite{BMW},  we can get a $C$-type metaplectic
special partition from the Young tableau $P(\lambda_{(\frac{1}{2})}^-)$ by using the H-algorithm defined in \cite[Chap. 8]{BMW}.  We can also get a special partition from the Young tableau $P(\lambda^-)$ or $P(\lambda_{(\frac{1}{2})}^-)$ by using the H-algorithm. 

Roughly speaking, the H-algorithm is equivalent to the following: during the process of collapse and expansion, we can not move the odd boxes for types B and
C (resp. even boxes  for type D) and the moving even box can not meet another even box in
the same row (resp. the moving odd box can not meet another odd box in the same row for type D).
\end{Rem}
Special partitions are given by the following characterizations.

\begin{Prop}[{\cite[Thm. 5.1.1, 5.1.2, 5.1.3 $\&$ 5.1.4 and Prop. 6.3.7]{CM}}]\label{bcd}
The nilpotent orbits of classical types can be identified with some partitions as follows:
\begin{enumerate}
    \item Nilpotent orbits in $\mathfrak{sl}(n,\mathbb{C})$ are in one-to-one correspondence with the set of  partitions of $n$. Every partition is special.
    \item Nilpotent orbits in $\mathfrak{so}{(2n+1,\mathbb{C})}$ are in one-to-one correspondence with the set of partitions of $2n+1$ in which even parts occur with even multiplicity. A partition $\bf q$ of type $B$ is special if and only if its dual partition ${\bf q}^t$ is a partition of type $B$.
    \item Nilpotent orbits in $\mathfrak{sp}{(n,\mathbb{C})}$ are in one-to-one correspondence with the set of partitions of $2n$ in which odd parts occur with even multiplicity. A partition $\bf q$ of type $C$ is special if and only if its dual partition ${\bf q}^t$ is a partition of type $C$.
    \item Nilpotent orbits in $\mathfrak{so}{(2n,\mathbb{C})}$ are in one-to-one correspondence with the set of partitions of $2n$ in which even parts occur with even multiplicity,
  except that each ``very even" partition ${\bf d}$ (consisting of only even parts) correspond to two orbits, denoted by $\mathcal{O}^I_{\bf d}$ and $\mathcal{O}^{II}_{\bf d}$. A partition $\bf q$ of type $D$ is special if and only if its dual partition ${\bf q}^t$ is a partition of type $C$.
\end{enumerate}
\end{Prop}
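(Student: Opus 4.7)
The statement is a compendium of classical results in the theory of nilpotent orbits of complex reductive Lie algebras, and the plan is to handle each type in turn by reducing the orbit classification to a normal-form problem for a single linear operator (possibly preserving a bilinear form), and then to invoke the combinatorial description of Lusztig--Spaltenstein duality for the characterization of special partitions.

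For type $A$, the bijection between nilpotent $\SL_n$-orbits and partitions of $n$ is the Jordan canonical form theorem: two nilpotent matrices are conjugate if and only if their Jordan block sizes agree, and every sequence of sizes summing to $n$ is realized. Every partition of $n$ is special because the Lusztig--Spaltenstein duality on type-$A$ partitions is the involution $\mathbf{p}\mapsto\mathbf{p}^t$, hence is surjective, so the image (which is the set of special partitions by definition) is everything.

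For types $B$, $C$, $D$, I choose a nilpotent representative $X$ preserving the defining bilinear form $\langle-,-\rangle$ (symmetric for $B$, $D$; symplectic for $C$) and decompose the underlying vector space into $X$-cyclic subspaces. A direct calculation on a single cyclic subspace of dimension $k$ shows that the restriction of the form is nondegenerate precisely when $k$ has the parity compatible with the ambient form (odd $k$ for symmetric, even $k$ for symplectic); cyclic subspaces of the opposite parity must therefore be paired with a partner of the same dimension to form a hyperbolic summand, and the resulting normal form is unique up to conjugation. This yields the parity-of-multiplicity conditions: even parts appear with even multiplicity in types $B$, $D$, and odd parts appear with even multiplicity in type $C$. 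For type $D$ the further ``very even'' ambiguity arises because when all parts are even the stabilizer $Z_G(X)$ becomes disconnected, with component group of order two, so the $\Or(2n)$-orbit splits into two $\SO(2n)$-orbits that are interchanged by the outer involution.

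Finally, for the characterization of special partitions, one takes as definition that $\mathbf{q}$ is special iff it lies in the image of the Lusztig--Spaltenstein duality map $d_X\colon\Par\to\Par$, which in classical types admits a purely combinatorial description: $d_X$ factors as transposition followed by the appropriate $X$-collapse. The task is then to verify, type by type, that the image coincides with the set of partitions whose transpose is a partition of the prescribed dual type ($B$ for $B$, $C$ for $C$, and $C$ for $D$). The main obstacle, and the reason the author defers to \cite{CM}, is precisely this last case-by-case combinatorial matching: it requires careful bookkeeping of the collapse operation to check that for partitions already satisfying the correct parity conditions the transpose automatically satisfies the dual type's conditions without any further collapse being needed, and conversely that a partition outside this class is strictly modified by $d_X\circ d_X$ and hence lies outside the image.
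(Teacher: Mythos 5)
This proposition is quoted verbatim from Collingwood--McGovern and the paper supplies no argument of its own, so there is no internal proof to compare against; your proposal has to be judged as a self-contained derivation of the cited facts. The first half of your sketch --- Jordan form for type $A$, decomposition into cyclic subspaces for an operator preserving a bilinear form, the parity computation $\langle v, X^j v\rangle = \pm(-1)^j\langle v, X^j v\rangle$ forcing nondegenerate cyclic summands to have odd (symmetric) or even (symplectic) dimension, and hyperbolic pairing of the remaining blocks --- is the standard route and is sound. One detail is stated backwards: for a very even partition the $\Or(2n)$-orbit splits into two $\SO(2n)$-orbits not because $Z_G(X)$ is disconnected with component group of order two, but because $Z_{\Or(2n)}(X)$ is entirely contained in $\SO(2n)$ (for very even partitions its reductive part is a product of symplectic groups, hence connected and of determinant one); when the partition has an odd part one exhibits a stabilizing reflection of determinant $-1$ and the orbit does not split. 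As written, your criterion would predict splitting in the wrong cases.

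The genuine gap is in the last paragraph. The assertion that a type-$B$ (resp.\ $C$, $D$) partition is special if and only if its transpose is of type $B$ (resp.\ $C$, $C$) is exactly the content of \cite[Prop.~6.3.7]{CM}, and your proposal reduces it to ``verify, type by type, that the image of $d_X = (\,\cdot\,)^t$ followed by $X$-collapse coincides with this set'' without carrying out the verification. That verification is the entire proof: one must show (i) if ${\bf q}^t$ is already of the dual type then ${\bf q} = (({\bf q}^t)^t)_X$ lies in the image of $d_X$, and conversely (ii) if ${\bf q}^t$ fails the dual parity conditions then ${\bf q}$ is not fixed by $d_X\circ d_X$, which requires tracking precisely which parts the collapse modifies. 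Since this combinatorial lemma is also the only part of the proposition actually used downstream (the paper repeatedly passes from ${\bf p}(\lambda^-)$ to ``the special partition of type $X$ corresponding to it''), deferring it leaves the proposal incomplete rather than merely terse.
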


\section{Annihilator varieties   for type \texorpdfstring{$A_{n-1}$}{}}\label{ann-sln}
In this section, we consider the case of type $A_{n-1}$. 
\begin{Def}
For $\mathfrak{g}=\mathfrak{sl}(n,\mathbb{C})$, we say $\lambda\in \mathfrak{h}^{\ast} $ is $(p, q)$-dominant if $t_i-t_j\in\mathbb{Z}_{>0}$ for all $i, j$ such that $1\le i < j\le p $ and $ p + 1\le i < j\le p + q$, where $\lambda= (t_1,t_2, \dots,t_n)$. In particular, $t_1 >t_2 >\dots >t_p$ and $ t_{p+1} >t_{p+2} > \dots >t_{p+q}$.
\end{Def}

\begin{Prop}[{\cite[Thm. 5.2]{BX1}}]\label{dominant}
For $\mathfrak{g}=\mathfrak{sl}(n,\mathbb{C})$, assume that $\lambda\in \mathfrak{h}^{\ast} $ is $(p, q)$-dominant.
	\begin{enumerate}
		\renewcommand{\labelenumi}{(\theenumi)}	
		\item If $t_p-t_{p+1}\in\mathbb{Z}$,
		that is, $\lambda$ is an integral weight, then $P(\lambda)$ is a Young tableau
		with at most two columns. And in this case ${\rm GKdim}\:L(\lambda)= m(n-m)$ where  $m$ is the number of entries in the second column of $P(\lambda$).	
		
		\item  If $t_p-t_{p+1}\notin\mathbb{Z}$,
		then $S(\lambda)$ consists of two Young tableaux with single column, and in this case ${\rm GKdim}\:L(\lambda)= pq$.
		
	\end{enumerate}
\end{Prop}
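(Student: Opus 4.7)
The plan is to apply Proposition~\ref{Annv}(1) to reduce the computation to identifying the shape of a single Young tableau, and then to read off $\mathrm{GKdim}\,L(\lambda)$ from the standard dimension formula for a nilpotent orbit in $\mathfrak{sl}(n,\mathbb{C})$. Recall that for a partition $\mu$ of $n$, one has $\dim \mathcal{O}_\mu = n^2 - \sum_i (\mu_i^t)^2$, and by Joseph--Vogan, $\dim V(\Ann\, L(\lambda)) = 2\,\mathrm{GKdim}\,L(\lambda)$. So everything reduces to determining $\mathbf{p}(\lambda)$.

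For part~(1), $\lambda$ is integral, hence there is a single $\mathbb{Z}$-coset and $\mathbf{p}(\lambda)$ is the shape obtained by RS insertion of the whole sequence $(t_1,\ldots,t_n)$. The $(p,q)$-dominance assumption presents this sequence as the concatenation of two strictly decreasing blocks of lengths $p$ and $q$. I would invoke Greene's theorem: the length of the first row of $P(\lambda)$ equals the longest weakly increasing subsequence of $(t_1,\ldots,t_n)$, and such a subsequence can pick at most one entry from each decreasing block, so it has length at most~$2$. Hence $P(\lambda)$ has at most two columns and must be of the form $(2^m, 1^{n-2m})$, where $m$ is the length of the second column. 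Conjugating, $\mu^t = (n-m, m)$, so $\dim \mathcal{O}_{\mathbf{p}(\lambda)} = n^2 - (n-m)^2 - m^2 = 2m(n-m)$, which gives $\mathrm{GKdim}\,L(\lambda) = m(n-m)$.

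For part~(2), the hypothesis $t_p - t_{p+1}\notin\mathbb{Z}$ forces the two decreasing blocks $(t_1,\ldots,t_p)$ and $(t_{p+1},\ldots,t_{p+q})$ into distinct $\mathbb{Z}$-cosets. Consequently they are precisely the two maximal cosetwise subsequences of $\lambda$, and since RS inserts a strictly decreasing sequence into a single column, $S(\lambda)$ consists of two one-column Young tableaux of lengths $p$ and $q$. The column-merge $\stackrel{c}{\sqcup}$ of Proposition~\ref{Annv}(1) then produces $P(\lambda)$ of shape $(2^{\min(p,q)},1^{|p-q|})$, and the identical orbit-dimension computation with $m=\min(p,q)$ yields $\mathrm{GKdim}\,L(\lambda) = \min(p,q)\cdot\max(p,q) = pq$.

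The only genuinely nontrivial step is the Greene-theoretic bound on the first-row length in part~(1); the remainder is straightforward bookkeeping with the $\stackrel{c}{\sqcup}$ operation, partition conjugation, and the nilpotent orbit dimension formula in type $A$.
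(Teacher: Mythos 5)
Your proposal is correct, but note that the paper itself offers no proof of Proposition~\ref{dominant}: it is imported verbatim from \cite{BX1} (Thm.~5.2), so there is no in-paper argument to compare against. Your derivation is a legitimate self-contained route, and it is in fact the same circle of ideas the paper uses \emph{later} when it proves Theorem~\ref{AV-a}: Proposition~\ref{Annv}(1) to identify $V(\Ann(L(\lambda)))=\overline{\mathcal{O}}_{\mathbf{p}(\lambda)}$, the Greene/RS fact that a concatenation of two strictly decreasing blocks has longest weakly increasing subsequence of length at most $2$ (hence at most two columns), and then Joseph's relation $\dim V(\Ann(L(\lambda)))=2\,\mathrm{GKdim}\,L(\lambda)$ together with $\dim\mathcal{O}_{\mu}=n^2-\sum_i(\mu^t_i)^2$ to convert the shape $[2^m,1^{n-2m}]$ into $\mathrm{GKdim}\,L(\lambda)=m(n-m)$; the non-integral case with $m=\min(p,q)$ gives $pq$ exactly as you compute. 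All the individual steps check out: within each block a weakly increasing subsequence has at most one term, the two blocks in case (2) really are the two maximal $\mathbb{Z}$-coset subsequences since $t_p-t_{p+1}\notin\mathbb{Z}$, and the column-union of two single columns has shape $[2^{\min(p,q)},1^{|p-q|}]$. The one point worth making explicit is that you are using the annihilator variety (a $G$-orbit closure in $\mathfrak{g}^*$) rather than the associated variety of the module, so the factor of $2$ in Joseph's formula is essential and you have applied it correctly. For the record, the original proof in \cite{BX1} reaches the same formula through the equivalent identity $\mathrm{GKdim}\,L(\lambda)=\binom{n}{2}-\sum_i\binom{q_i}{2}$ with $\mathbf{q}=\mathbf{p}(\lambda)^t$, which coincides with your $\tfrac12\bigl(n^2-\sum_i q_i^2\bigr)$ because $\sum_i q_i=n$; so the two computations differ only cosmetically.
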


\begin{Thm}\label{AV-a}
		Let $L(\lambda)$ be a Harish-Chandra  module of $SU(p,n-p)$ with highest weight $\lambda-\rho\in\mathfrak{h}^*$. Suppose $q_2$ is  the length of second column of the Young
tableau $P(\lambda)$. Then $V(\Ann(L(\lambda)))=\overline{\mathcal{O}}_{{\bf p}}$ with
		\[	{\bf p}=\left\{
		\begin{array}{ll}
			[2^{q_2},1^{n-2q_2}], & \textnormal{~if~$\lambda$~is~integral},\\	   	  \lbrack 2^r,1^{n-2r}\rbrack,  & \textnormal{~otherwise}.
		\end{array}	
		\right.
		\]		
\end{Thm}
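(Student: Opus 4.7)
The plan is to deduce the partition directly from Proposition \ref{Annv}(1), which says that $V(\Ann(L(\lambda))) = \overline{\mathcal{O}}_{\mathbf{p}(\lambda)}$, where $\mathbf{p}(\lambda)$ is the row-partition of the Young tableau $P(\lambda)$ obtained by column-concatenating all tableaux in $S(\lambda)$. So the task reduces to computing the shape of $P(\lambda)$ for a highest weight Harish-Chandra module of $SU(p, n-p)$. The key input is that, for such a module, $\lambda$ is $(p, n-p)$-dominant, so Proposition \ref{dominant} applies.

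First I would treat the integral case. By Proposition \ref{dominant}(1), $P(\lambda)$ has at most two columns. Since the total number of boxes is $n$ and the second column has length $q_2$, the first column has length $n - q_2 \geq q_2$. Reading off the row lengths gives the partition $[2^{q_2}, 1^{n - 2q_2}]$, which matches the claim. One should record here that the notation $q_2$ in the theorem is consistent with ${\bf q}(\lambda) = {\bf p}(\lambda)^t$ used earlier.

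Next I would handle the non-integral case. By Proposition \ref{dominant}(2), $S(\lambda)$ consists of exactly two single-column Young tableaux. I would then use $(p, n-p)$-dominance to identify their lengths: the two maximal $\mathbb{Z}$-coset subsequences of $(t_1, \ldots, t_n)$ are precisely $(t_1, \ldots, t_p)$ and $(t_{p+1}, \ldots, t_n)$ (both strictly decreasing, hence producing single columns under the RS algorithm). Column-concatenating them and ordering columns by decreasing length produces a tableau with column lengths $\max(p, n-p)$ and $\min(p, n-p) = r$, whose row partition is $[2^r, 1^{n - 2r}]$.

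No step here looks like a serious obstacle; the content is essentially a bookkeeping assembly of Propositions \ref{Annv}(1) and \ref{dominant}. The main point to verify carefully is the non-integral case: that the two $\mathbb{Z}$-cosets are split exactly along the $(p, n-p)$ block decomposition, ensuring that the two single-column tableaux have lengths $p$ and $n - p$ respectively. Once this identification is in place, the partitions follow by direct inspection of the resulting Young tableaux.
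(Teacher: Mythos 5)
Your proposal is correct and follows essentially the same route as the paper: both reduce the statement to Proposition \ref{Annv}(1) combined with the $(p,n-p)$-dominance of $\lambda$ and Proposition \ref{dominant}, treating the integral case via the two-column tableau $P(\lambda)$ and the non-integral case via the two single-column tableaux of lengths $p$ and $n-p$ whose column-concatenation gives $[2^r,1^{n-2r}]$. The one point you flag for careful verification (that the $\mathbb{Z}$-cosets split along the block decomposition) is exactly the content the paper also relies on, so there is no gap.
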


\begin{proof}

From \cite{EHW}, we know that  $\lambda\in \mathfrak{h}^{\ast} $ is $(p, n-p)$-dominant.



 
 \begin{enumerate}[leftmargin=18pt]
 	\renewcommand{\labelenumi}{(\theenumi)}
  \item When $\lambda $ is not integral, 
 $S(\lambda)$ has two Young tableaux and these two Young tableaux have only one column, so
 $${\bf p}(\lambda_{Y_1})=[1^p]\text{~and~}{\bf p}(\lambda_{Y_2})=[1^{n-p}].$$
 
By Proposition \ref{Annv}, ${\bf p}={\bf p}(\lambda)={\bf p}(\lambda_{Y_1}){\stackrel{c}{\sqcup}}{\bf p}(\lambda_{Y_2})=[2^r,1^{n-2r}]$ is the corresponding partition for $V(\Ann(L(\lambda)))=\overline{\mathcal{O}}_{{\bf p}}$, where $r=\min\{p, n-p\}$. 


\item When $\lambda $ is integral, $P(\lambda)$ is a Young tableau
		with at most two columns by Proposition \ref{dominant}. Thus by Proposition \ref{Annv}, we have $V(\Ann(L(\lambda)))=\overline{\mathcal{O}}_{{\bf p}}$, where ${\bf p}={\bf p}(\lambda)=[2^{q_2},1^{n-2q_2}]$ with $q_2$ being  the length of second column of the Young
tableau $P(\lambda)$.



\end{enumerate}	
So far, we have completed the proof.
\end{proof}
\begin{Cor}
    Keep notations as above. When $V(L(\lambda))=\overline{\mathcal{O}}_{k(\lambda)}$, we will have 
$V(\Ann(L(\lambda)))=\overline{\mathcal{O}}_{{\bf p}}$ with
		\[	{\bf p}=
			[2^{k(\lambda)},1^{n-2k(\lambda)}].
		\]		
\end{Cor}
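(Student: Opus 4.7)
The plan is to derive this as an immediate consequence of Theorem \ref{AV-a} together with Proposition \ref{BCD}(1), by matching the two descriptions of the invariant $k(\lambda)$ case by case. Since Theorem \ref{AV-a} already pins down the partition $\bf p$ governing $V(\Ann(L(\lambda)))$, and Proposition \ref{BCD}(1) gives the integer $k(\lambda)$ for which $V(L(\lambda))=\overline{\mathcal{O}}_{k(\lambda)}$, the corollary should fall out by a simple substitution.

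More precisely, I would split into the same two cases as in the theorem. When $\lambda$ is integral, Theorem \ref{AV-a} gives ${\bf p}=[2^{q_2},1^{n-2q_2}]$, where $q_2$ is the length of the second column of $P(\lambda)$, while Proposition \ref{BCD}(1) gives $k(\lambda)=q_2$; substituting yields ${\bf p}=[2^{k(\lambda)},1^{n-2k(\lambda)}]$. When $\lambda$ is not integral, Theorem \ref{AV-a} gives ${\bf p}=[2^r,1^{n-2r}]$ with $r=\min\{p,n-p\}$, while Proposition \ref{BCD}(1) gives $k(\lambda)=\min\{p,n-p\}=r$; substituting again yields the stated formula.

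There is no real obstacle here: the corollary is merely an observation that the two invariants $q_2$ (resp.\ $\min\{p,n-p\}$) appearing in Theorem \ref{AV-a} coincide with the invariant $k(\lambda)$ recorded in Proposition \ref{BCD}(1), so that the partition describing the annihilator variety $V(\Ann(L(\lambda)))$ can be expressed uniformly in terms of $k(\lambda)$. The only thing worth a sentence of justification is the consistency check that $n-2k(\lambda)\geq 0$ in both cases, which follows because $k(\lambda)\leq\min\{p,n-p\}\leq n/2$; in the integral case, $q_2$ is the length of a column in a two-column tableau whose total number of boxes is $n$, forcing $q_2\leq n/2$.
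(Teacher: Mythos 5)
Your proposal is correct and matches the paper's intent: the corollary is stated without proof precisely because it is the immediate substitution of $k(\lambda)=q_2$ (integral case) or $k(\lambda)=\min\{p,n-p\}=r$ (non-integral case) from Proposition \ref{BCD}(1) into the partitions of Theorem \ref{AV-a}. Your added sanity check that $n-2k(\lambda)\geq 0$ is a reasonable extra remark but nothing more is needed.
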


    

\section{Annihilator varieties  for  types \texorpdfstring{$B_{n}$}{} and \texorpdfstring{$C_{n}$}{}}\label{ann-bc}

In this section, we consider the case of   types $B_n$ and $C_n$. 

\begin{Thm}\label{Bpartition}
	Let $L(\lambda)$ be a Harish-Chandra module of $SO(2,2n-1)$ with highest weight $\lambda-\rho=\lambda_0+z\xi\in \mathfrak{h}^*$. Denote $\lambda_0=(\lambda_1,\lambda_2,\cdots,\lambda_n)$, then $V(\Ann(L(\lambda)))=\overline{\mathcal{O}}_{{\bf p}}$ with
	\[	{\bf p}=\left\{
	\begin{array}{ll}
		[1^{2n+1}], & \textnormal{~if~$z\in \lambda_2-\lambda_1+\mathbb{Z}_{\geq 0}$},\\	
		 \lbrack 2^2,1^{2n-3}\rbrack,  & \textnormal{~if~$z> -\lambda_1-n+\frac{1}{2}\textnormal{~and~}z\in\frac{1}{2}+\mathbb{Z}$},\\
		  	  \lbrack 3,1^{2n-2}\rbrack,  & \textnormal{~otherwise}.
	\end{array}	
	\right.
	\]		
\end{Thm}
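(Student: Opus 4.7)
The plan is to proceed in three stages. The first is to translate the affine parametrization $\lambda - \rho = \lambda_0 + z\xi$ into coordinates of $\lambda = (t_1,\ldots,t_n) \in \mathfrak{h}^*$ so that Proposition~\ref{BCD}(4) becomes applicable. For $SO(2,2n-1)$ the highest root is $\beta = e_1 + e_2$; the orthogonality of $\xi$ to $\Phi(\mathfrak{k})$ together with $(\xi, \beta^\vee) = 1$ pins $\xi = e_1$, and with $\rho = (n-\tfrac{1}{2}, n-\tfrac{3}{2}, \ldots, \tfrac{1}{2})$ one gets $t_1 = \lambda_1 + z + n - \tfrac{1}{2}$ and $t_j = \lambda_j + n - j + \tfrac{1}{2}$ for $j \ge 2$. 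The requirement $(\lambda_0 + \rho, \beta) = 0$ forces $\lambda_1 + \lambda_2 = 2 - 2n$, and $\mathfrak{k}$-integrality of $\lambda - \rho$ places $\lambda_2, \ldots, \lambda_n$ in a single $\mathbb{Z}$-coset, which combined with the linear relation yields $\lambda_1 - \lambda_2 \in \mathbb{Z}$ as well.

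Second, I would verify that the three conditions in the theorem correspond bijectively to $k(\lambda) = 0, 1, 2$ in Proposition~\ref{BCD}(4). With $\lambda_1 - \lambda_2 \in \mathbb{Z}$, the quantity $t_1 - t_2 = \lambda_1 - \lambda_2 + z + 1$ lies in $\mathbb{Z}$ (resp.~$\tfrac{1}{2} + \mathbb{Z}$) iff $z$ does; also $t_1 > t_2 \Longleftrightarrow z \ge \lambda_2 - \lambda_1$ and $t_1 > 0 \Longleftrightarrow z > -\lambda_1 - n + \tfrac{1}{2}$. Thus Case~1 of the theorem corresponds exactly to $k(\lambda) = 0$, Case~2 to $k(\lambda) = 1$, and the residual Case~3 to $k(\lambda) = 2$.

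Third, I would pin down the explicit $B$-type partition in each case. By Corollary~\ref{dim} and Proposition~\ref{k}, with $h^\vee - 1 = 2n - 2$ and $c = n - \tfrac{3}{2}$, the dimension of $V(\Ann(L(\lambda)))$ equals $0$, $4n - 4$, or $4n - 2$ according to $k(\lambda) = 0, 1, 2$. Proposition~\ref{Annv} guarantees that the annihilator variety is the closure of a single nilpotent orbit, and a direct calculation with the Collingwood--McGovern dimension formula $\dim \mathcal{O}^{\mathfrak{so}(2n+1)}_{\bf p} = \binom{2n+1}{2} - \tfrac{1}{2}\bigl(\sum_i (p^t_i)^2 - r_o({\bf p})\bigr)$ (where $r_o({\bf p})$ denotes the number of odd parts of $\bf p$) shows that the unique $B$-type partitions of $2n + 1$ with these three respective dimensions are $[1^{2n+1}]$, $[2^2, 1^{2n-3}]$, and $[3, 1^{2n-2}]$.

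The main obstacle is the uniqueness in the last step: one must rule out any other admissible partition of $2n+1$ producing the same orbit dimension. This reduces to a short combinatorial check on partitions in which even parts have even multiplicity, organised by total dimension; the smallness of the three target dimensions compared with the regular-orbit dimension $\dim\mathfrak{so}(2n+1) - n$ makes the verification routine. As an alternative to invoking the dimension formula, one could instead run the H-algorithm of Proposition~\ref{Annv}(2) on $\lambda^-$ (or $\lambda_{(1/2)}^-$) case-by-case, but the uniqueness argument is considerably shorter.
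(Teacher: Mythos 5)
Your proposal is correct, but it proves the theorem by a genuinely different route than the paper. The paper's own proof never passes through Proposition~\ref{BCD}: it splits into the integral, half-integral, and remaining cases, runs the Robinson--Schensted algorithm directly on $\lambda^-$ (or on the pieces $\lambda_{Y_1}^-$, $\lambda_{Y_2}$), and then applies the $B$-collapse, special-partition, and H-algorithm machinery of Proposition~\ref{Annv} to read off $\mathbf{p}$ explicitly. You instead reduce to the known associated-variety computation of Proposition~\ref{BCD}(4) via the coordinate translation $t_1=\lambda_1+z+n-\tfrac12$, $t_j=\lambda_j+n-j+\tfrac12$, then use Corollary~\ref{dim} together with the orbit-dimension formula of Proposition~\ref{k} (which is a statement about the orbits $\mathcal{O}_k$ themselves, so its unitarity hypothesis is harmless here) to get $\dim V(\Ann(L(\lambda)))\in\{0,4n-4,4n-2\}$, and finish by uniqueness of the $B$-type partition of each dimension. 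That uniqueness check is genuinely easy: the only competitors with $p_1=2$ are $[2^{2j},1^{2n+1-4j}]$ of dimension $4j(n-j)$, which is divisible by $4$ and so never equals $4n-2$, and equals $4n-4$ only for $j=1$; and any partition with $p_1\ge 3$ dominates $[3,1^{2n-2}]$, hence has dimension $\ge 4n-2$ with equality only for $[3,1^{2n-2}]$ itself. Your approach is shorter and avoids the special-partition bookkeeping; it is in fact the same strategy the paper uses for $E_{6(-14)}$ and $E_{7(-25)}$ (Theorems~\ref{e6-GK} and~\ref{e7-GK}), whereas the paper's direct computation is self-contained relative to \cite{BMW} and does not depend on \cite{BXX}. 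One small repair: a common $\mathbb{Z}$-coset for $\lambda_2,\dots,\lambda_n$ plus $\lambda_1+\lambda_2=2-2n$ does not by itself give $\lambda_1-\lambda_2\in\mathbb{Z}$; you need the short compact roots $e_i$ to force $2\lambda_i\in\mathbb{Z}$ for $i\ge 2$ (as quoted from \cite{EHW} in the paper), after which $\lambda_1-\lambda_2=2-2n-2\lambda_2\in\mathbb{Z}$ and your translation of the three conditions into those of Proposition~\ref{BCD}(4) goes through exactly as you state.
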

\begin{proof}	
	 From \cite{EHW} we know that $\lambda-\rho=\lambda_0+z\xi$ with $\lambda_0=(\lambda_1,\lambda_2,\cdots,\lambda_n)$,  $\lambda_2\geq\lambda_3\geq\cdots\geq\lambda_n\geq 0$,  $\lambda_i-\lambda_j\in\mathbb{Z}\textnormal{~and~}2\lambda_i\in\mathbb{N}$ for $2\le i,\:j\le n$. By the normalization $(\lambda_0+\rho, \beta)=0 $, we have $\lambda_1+\lambda_2=-2n+2$.

 \begin{enumerate}[leftmargin=18pt]
 	\item When $\lambda $ is integral,  we have $\xi=(1,0,\dots,0)$ and 
	\begin{align*}
	   \lambda=(z+\lambda_1+n-\frac{1}{2},\lambda_2+n-\frac{3}{2},\cdots,\lambda_n+\frac{1}{2}):=(t_1,...,t_n).
	\end{align*}
Thus we have   $t_2> t_3>\cdots> t_{n-1}> t_n>0$.

 When $t_1>t_2$ (equivalently $z\in \lambda_2-\lambda_1+\mathbb{Z}_{\geq 0}$), $L(\lambda)$ will be finite-dimensional and $V(\Ann(L(\lambda)))=\overline{\mathcal{O}}_{{\bf p}}$ with the partition ${\bf p}=[1^{2n+1}]$ corresponding to the trivial orbit of type $B_n$. 

When $t_1\leq t_2$, by using the Robinson-Schensted   algorithm for $\lambda^-$,  we can get  a Young tableau $P(\lambda^-)$ which consists of
at most three columns.
When $t_2\geq t_1>-t_n$, from the construction process, we can see that $P(\lambda^-)$ will be a Young tableau consisting of two columns with $c_1(P(\lambda^-))=2n-2$ and $c_2(P(\lambda^-))=2$. Thus ${\bf p}(\lambda^-)=[2,2,1^{2n-4}]$. In this case, the special partition of type $B$ corresponding to ${\bf p}(\lambda^-)$ is ${\bf p}=[3, 1^{2n-2}]$. Therefore $V(\Ann(L(\lambda)))=\overline{\mathcal{O}}_{{\bf p}}$ by Proposition \ref{Annv}.
When $t_1\leq -t_n$, $P(\lambda^-)$
 will be a Young tableau consisting of three columns with $c_1(P(\lambda^-))=2n-2$, $c_2(P(\lambda^-))=1$ and $c_3(P(\lambda^-))=1$. Thus ${\bf p}(\lambda^-)=[3,1^{2n-3}]$. In this case, the special partition of type $B$ corresponding to ${\bf p}(\lambda^-)$ is ${\bf p}=[3, 1^{2n-2}]$.

  \item When $\lambda $ is half integral, we have $t_n> 0$, $t_1-t_2\in \frac{1}{2}+\mathbb{Z}$, $2t_k\in  \mathbb{Z}$ for $1\leq k\leq n $ and $t_i-t_j \in \mathbb{Z}_{> 0}$ for $2\leq i<j\leq n$. Then we have $t_2>t_3>...>t_n>-t_n>...>-t_2 $. We denote $\lambda_{Y_1}=(t_2,...,t_n)$ and $\lambda_{Y_2}=(t_1)$. Thus $\lambda_{Y_1}, \lambda_{Y_2}\in [\lambda]_{1,2}$.
By using Robinson-Schensted  algorithm, we can get a Young tableau $P(\lambda_{Y_1}^-)$ with shape ${\bf p}(\lambda_{Y_1}^-)=[1^{2n-2}]$ and a Young tableau $P(\lambda_{Y_2}^-)$ with shape 
\[	{\bf p}(\lambda_{Y_2}^-)=\left\{
		\begin{array}{ll}
			[2], & \textnormal{~if~}t_1\leq 0,\\	   	  \lbrack 1,1\rbrack,  & \textnormal{~if~}t_1> 0.
		\end{array}	
		\right.
		\]		

If $t_1\in \frac{1}{2}+\mathbb{Z}$ (equivalently $z\in\frac{1}{2}+\mathbb{Z}$), we will have $\lambda_{Y_1}\in [\lambda]_{1}$ and $ \lambda_{Y_2}\in [\lambda]_{2}$. From Proposition \ref{Annv}, we have $\mathbf{p}_{0}=[1^{2n-1}]$ and 
\[	\mathbf{p}_{\frac{1}{2}}=\left\{
		\begin{array}{ll}
			[1,1], & \textnormal{~if~}t_1> 0,\\	   	  \lbrack 2\rbrack,  & \textnormal{~if~}t_1\leq 0.
		\end{array}	
		\right.
		\]		
Therefore by Proposition \ref{Annv}, we have $V(\Ann(L(\lambda)))=\overline{\mathcal{O}}_{{\bf p}}$ with 
\[	{\bf p}=(\mathbf{p}_{0} {\stackrel{c}{\sqcup}} {\mathbf p}_{\frac{1}{2}})_B=\left\{
		\begin{array}{ll}
			[2,2,1^{2n-3}], & \textnormal{~if~}t_1> 0,\\	   	  \lbrack 3,1^{2n-2}\rbrack,  & \textnormal{~if~}t_1\leq 0.
		\end{array}	
		\right.
		\]		

If $t_1\in \mathbb{Z}$  (equivalently $z\in\frac{1}{2}+\mathbb{Z}$), we will have $\lambda_{Y_1}\in [\lambda]_{2}$ and $ \lambda_{Y_2}\in [\lambda]_{1}$. From Proposition \ref{Annv}, we have $\mathbf{p}_{\frac{1}{2}}=[1^{2n-2}]$ and 
\[	\mathbf{p}_{0}=\left\{
		\begin{array}{ll}
			[1,1,1], & \textnormal{~if~}t_1> 0,\\	   	  \lbrack 3\rbrack,  & \textnormal{~if~}t_1\leq 0.
		\end{array}	
		\right.
		\]		
Therefore by Proposition \ref{Annv}, we have $V(\Ann(L(\lambda)))=\overline{\mathcal{O}}_{{\bf p}}$ with 
 \[	{\bf p}=(\mathbf{p}_{0} {\stackrel{c}{\sqcup}} {\mathbf p}_{\frac{1}{2}})_B=\left\{
		\begin{array}{ll}
			[2,2,1^{2n-3}], & \textnormal{~if~}t_1> 0,\\	   	  \lbrack 3,1^{2n-2}\rbrack,  & \textnormal{~if~}t_1\leq 0.
		\end{array}	
		\right.
		\]		
 
Note that $t_1>0$ is equivalent to $z>-\lambda_1-n+\frac{1}{2}$.

\item When $\lambda$ is not integral or half integral, we have $\lambda_{Y_1}=(t_2,\cdots,t_n)\in [\lambda]_{1,2}$ and $\lambda_{Y_2}=(t_1)\in [\lambda]_3$. By using Robinson-Schensted  algorithm, we can get a Young tableau $P(\lambda_{Y_1}^-)$ with shape ${\bf p}(\lambda_{Y_1}^-)=[1^{2n-2}]$ and a Young tableau $P(\lambda_{Y_2})$ with shape $[1]$. Thus we have $\mathbf{p}_{0}=[1^{2n-1}]$ or ${\mathbf p}_{\frac{1}{2}}=[1^{2n-2}]$, and $\mathbf{p}_{1}=[1]$. Therefore by Proposition \ref{Annv}, we have $V(\Ann(L(\lambda)))=\overline{\mathcal{O}}_{{\bf p}}$ with ${\bf p}=(\mathbf{p}_{0} {\stackrel{c}{\sqcup}} {2\mathbf p}_{1})_B=[3,1^{2n-2}]$ or ${\bf p}=([1]{\stackrel{c}{\sqcup}}\mathbf{p}_{\frac{1}{2}} {\stackrel{c}{\sqcup}} {2\mathbf p}_{1})_B=[3,1^{2n-2}]$.

\end{enumerate}

This finishes the proof.

\end{proof}

\begin{Rem}
    From \cite[Lem. 3.17]{EHW}, we know that the generalized Verma module $N(\lambda)$ will be irreducible when $\lambda$ is not integral or half integral. Thus we will have $V(\Ann(L(\lambda)))=V(\Ann(N(\lambda)))=\overline{\mathcal{O}}_{{\bf p}}$ with ${\bf p}=[3,1^{2n-2}]$, which is just the Richardson nilpotent  orbit  $G\cdot
    \mathfrak{p}^+$, see \cite{BZ}.
\end{Rem}

\begin{Thm}\label{C-AV}
	Let $L(\lambda)$ be a Harish-Chandra module of $Sp(n,\mathbb{R})$ with  highest weight $\lambda-\rho\in\mathfrak{h}^*$. 
 Suppose $q_2$ is  the length of second column of the Young
tableau $P(\lambda^-)$.
 Denote $\lambda_0=(\lambda_1,\lambda_2,\cdots,\lambda_n)$, then $V(\Ann(L(\lambda)))=\overline{\mathcal{O}}_{{\bf p}}$ with

	\[	{\bf p}=\left\{
	\begin{array}{ll}
		[2^{2q_2^{\odd}},1^{2n-4q_2^{\odd}}], & \textnormal{~if~}z\in\mathbb{Z},\\
		\lbrack 2^{2q_2^{\ev}+1},1^{2n-4q_2^{\ev}-2}\rbrack, & \textnormal{~if~}z\in\frac{1}{2}+\mathbb{Z},\\
		\lbrack 2^{n}\rbrack, &
		\textnormal{~otherwise.} \\			
	\end{array}	
	\right.
	\]

\end{Thm}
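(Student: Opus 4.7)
The plan is to apply the type $C$ partition algorithm of Proposition \ref{Annv} case-by-case on $z$ modulo $\frac{1}{2}\mathbb{Z}$, mirroring the structure of the proof of Theorem \ref{Bpartition}. For $Sp(n,\mathbb{R})$ the element $\xi$ orthogonal to $\Phi(\mathfrak{k})$ with $(\xi,\beta^{\vee})=1$ (here $\beta=2e_1$ and $\mathfrak{k}=\mathfrak{u}(n)$) is $\xi=(1,1,\ldots,1)$, and the normalization $(\lambda_0+\rho,\beta)=0$ forces $\lambda_1=-n$. Writing $\lambda=(t_1,\ldots,t_n)$, one has $t_1=z$ and $t_i=z+\lambda_i+n-i+1$ for $i\geq 2$, and the unitarity conditions from \cite{EHW} force $t_2>t_3>\cdots>t_n$.

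In the integral case $z\in\mathbb{Z}$, $[\lambda]=[\lambda]_1$ and Proposition \ref{Annv} gives ${\bf p}=({\bf p}(\lambda^{-}))_{C}$ via the H-algorithm. Computing $P(\lambda^{-})$ by RS insertion on $(t_1,\ldots,t_n,-t_n,\ldots,-t_1)$ yields a tableau whose second column has length $q_2$; tracking how odd and even boxes behave under the type-$C$ H-algorithm (which pins odd boxes and rearranges only even boxes subject to the parity constraints of \cite{BMW}) yields ${\bf p}=[2^{2q_2^{\odd}},1^{2n-4q_2^{\odd}}]$, in agreement with $k(\lambda)=2q_2^{\odd}$ from Proposition \ref{BCD}. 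In the half-integral case $z\in\frac{1}{2}+\mathbb{Z}$, $\lambda\in[\lambda]_2$ and $\mathbf{p}_{\frac{1}{2}}$ is the $C$-type \emph{metaplectic} special partition, computed by dualizing $P(\lambda^{-})$ to a type-$D$ shape, collapsing there, and dualizing back; the roles of odd and even boxes swap relative to the integral case, producing ${\bf p}=[2^{2q_2^{\ev}+1},1^{2n-4q_2^{\ev}-2}]$, which agrees with $k(\lambda)=2q_2^{\ev}+1$.

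For the generic case $z\notin\frac{1}{2}\mathbb{Z}$, the structure of $[\lambda]$ splits according to whether the $\lambda_i$ for $i\geq 2$ are integers or half-integers, and in each subcase I carry out the RS insertions and assemble $\mathbf{d}_\lambda=\mathbf{p}_0\stackrel{c}{\sqcup}\mathbf{p}_{\frac{1}{2}}\stackrel{c}{\sqcup}(\stackrel{c}{\sqcup}_i 2\mathbf{p}_i)$ from Proposition \ref{Annv}, then verify that its $C$-collapse equals $[2^n]$, the Richardson orbit of the Siegel parabolic $\mathfrak{q}=\mathfrak{k}\oplus\mathfrak{p}^+$. This matches $k(\lambda)=n$ from Proposition \ref{BCD}, and Corollary \ref{dim} (forcing $\dim\mathcal{O}_{\Ann(L(\lambda))}=2\dim\overline{\mathcal{O}}_n=n(n+1)=\dim\mathcal{O}_{[2^n]}$) provides an independent consistency check.

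The main obstacle is the half-integral case: the $C$-type metaplectic special partition is not the ordinary $C$-collapse, and the extra ``$+1$'' in $2q_2^{\ev}+1$ must be traced carefully through the type-$D$ dualization. This is the only place where the natural parity symmetry between the integral and half-integral cases is broken, and it reflects the metaplectic cover of $Sp(n,\mathbb{R})$ underlying the half-integral unitary highest weight modules.
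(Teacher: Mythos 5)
Your proposal is correct and follows essentially the same route as the paper: both apply the type-$C$ partition algorithm of Proposition \ref{Annv} to $P(\lambda^-)$, split on $z\in\mathbb{Z}$, $z\in\frac{1}{2}+\mathbb{Z}$, or neither, and read off the ordinary special partition, the metaplectic special partition $[2^{2q_2^{\ev}+1},1^{2n-4q_2^{\ev}-2}]$, and the $C$-collapse $(2[1^n])_C=[2^n]$ respectively. One minor remark: in the generic case there is no further splitting to do, since the $(n)$-dominance condition from \cite{EHW} forces all entries of $\lambda$ to be congruent modulo $\mathbb{Z}$, so $[\lambda]=[\lambda]_3$ consists of the single block $\lambda$ and $\mathbf{d}_\lambda=2\mathbf{p}_1$ with $\mathbf{p}_1=[1^n]$, exactly as in the paper.
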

\begin{proof}
	From \cite{EHW} we know that $\lambda-\rho=\lambda_0+z\xi$ with $\lambda_0=(\lambda_1,\lambda_2,\cdots,\lambda_n)$,   $\lambda_i-\lambda_j\in\mathbb{N}$ for $i<j$. By the normalization $(\lambda_0+\rho, \beta)=0 $, we have $\lambda_1=-n$. 
	 
	Firstly we suppose that $n$ is even. Then we have the follows.
	
\begin{enumerate}[leftmargin=18pt]
    \item When $\lambda$ is integral, we have $\xi=(\underbrace{1,1,\dots,1}_{n})$ and $$\lambda=(\lambda_1+z+n,\lambda_2+z+n-1,\cdots,\lambda_n+z+1):=(t_1,t_1,\cdots,t_n).$$
Thus we have   $t_1> t_2>\cdots> t_{n-1}> t_n$.

When $t_1\in\mathbb{Z}$ (equivalently $z\in\mathbb{Z}$), by using the Robinson-Schensted algorithm for $\lambda^-$,
  we can get  a Young tableau $P(\lambda^-)$ which consists of
at most two columns with $c_2(P(\lambda^-))=q_2$ and $c_1(P(\lambda^-))=2n-q_2$. Thus ${\bf p}(\lambda^-)=[2^{q_2},1^{2n-2q_2}]$. In this case the special partition of type $C$ corresponding to ${\bf p}(\lambda^-)$ is
\[	{\bf p}=\left\{
	\begin{array}{ll}
		[2^{q_2},1^{2n-2q_2}], & \textnormal{~if~}q_2=2k,\\
	\lbrack 2^{q_2+1},1^{2n-2q_2-2}\rbrack, & \textnormal{~if~} q_2=2k+1.		
	\end{array}	
	\right.
	\]	
Equivalently,
${\bf p}=[2^{2q_2^{\odd}},1^{2n-4q_2^{\odd}}]$ by  (\ref{eq:ev-od}). Therefore $V(\Ann(L(\lambda)))=\overline{\mathcal{O}}_{{\bf p}}$ by Proposition \ref{Annv}.
 
\item When $\lambda$ is half integral (equivalently $z\in\frac{1}{2}+\mathbb{Z}$), we have $\lambda\in [\lambda]_2$. Similarly we can get  a Young tableau $P(\lambda^-)$ which consists of
at most two columns with $c_2(P(\lambda^-))=q_2$ and $c_1(P(\lambda^-))=2n-q_2$. Thus ${\bf p}(\lambda^-)=[2^{q_2},1^{2n-2q_2}]$. 
In this case the $C$-type metaplectic
special partition ${\mathbf p}_{\frac{1}{2}}$ corresponding to ${\bf p}(\lambda^-)$ is 
$${\mathbf p}_{\frac{1}{2}}=[2^{2q_2^{\ev}+1},1^{2n-4q_2^{\ev}-2}]=\left\{
	\begin{array}{ll}
		[2^{q_2+1},1^{2n-2q_2-2}], & \textnormal{~if~}q_2=2k,\\
	\lbrack 2^{q_2},1^{2n-2q_2}\rbrack, & \textnormal{~if~} q_2=2k+1.		
	\end{array}	
	\right.$$ by (\ref{eq:ev-od}). 
 
 Therefore $V(\Ann(L(\lambda)))=\overline{\mathcal{O}}_{{\bf p}}$ with ${\bf p}=({\mathbf p}_{\frac{1}{2}})_C={\mathbf p}_{\frac{1}{2}}$ by Proposition \ref{Annv}.

\item When $\lambda$ is not integral or half integral, we have $\lambda\in [\lambda]_3$. By Proposition \ref{Annv},  we have ${\bf p}_1=p(\tilde{\lambda})=p(\lambda)=[1^n]$.  Therefore $V(\Ann(L(\lambda)))=\overline{\mathcal{O}}_{{\bf p}}$ with ${\bf p}=(2\mathbf{p}_{1} )_C=[2^n]$  by Proposition \ref{Annv}.
    
\end{enumerate}

When $n$ is odd, the argument is similar to the case when $n$ is even. We omit the details here.

This finishes the proof.
\end{proof}

\begin{Cor}
    Keep notations as above. When $V(L(\lambda))=\overline{\mathcal{O}}_{k(\lambda)}$, we will have 
$V(\Ann(L(\lambda)))=\overline{\mathcal{O}}_{{\bf p}}$ with
		\[	{\bf p}=
			[2^{k(\lambda)},1^{2n-2k(\lambda)}].
		\]		
\end{Cor}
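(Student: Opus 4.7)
The plan is to observe that this corollary is essentially a repackaging of Theorem \ref{C-AV} using the explicit values of $k(\lambda)$ supplied by Proposition \ref{BCD}(2). Since both statements assume the same hypotheses and both split into the three cases $z\in\mathbb{Z}$, $z\in\frac{1}{2}+\mathbb{Z}$, and the generic case, I would proceed case-by-case and check that the partition produced by Theorem \ref{C-AV} coincides with $[2^{k(\lambda)},1^{2n-2k(\lambda)}]$ for the appropriate $k(\lambda)$.

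First I would reconcile the two parametrizations. In the proof of Theorem \ref{C-AV} one has $\lambda=(t_1,\dots,t_n)$ with $t_1=z$ (using $\lambda_1=-n$ and the shift by $\rho$), so the trichotomy in Proposition \ref{BCD}(2), phrased in terms of $t_1$, matches the trichotomy in Theorem \ref{C-AV} phrased in terms of $z$. Thus it suffices to substitute in each case.

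Next I would handle the three cases. When $z\in\mathbb{Z}$, Proposition \ref{BCD}(2) gives $k(\lambda)=2q_2^{\odd}$, so Theorem \ref{C-AV} yields $\mathbf{p}=[2^{2q_2^{\odd}},1^{2n-4q_2^{\odd}}]=[2^{k(\lambda)},1^{2n-2k(\lambda)}]$. When $z\in\frac{1}{2}+\mathbb{Z}$, the proposition gives $k(\lambda)=2q_2^{\ev}+1$ and Theorem \ref{C-AV} yields $\mathbf{p}=[2^{2q_2^{\ev}+1},1^{2n-4q_2^{\ev}-2}]=[2^{k(\lambda)},1^{2n-2k(\lambda)}]$. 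In the remaining case $k(\lambda)=n$ and Theorem \ref{C-AV} gives $\mathbf{p}=[2^n]=[2^{k(\lambda)},1^{2n-2k(\lambda)}]$, where the $1^0$ block is empty.

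There is no substantive obstacle: the corollary is purely formal bookkeeping once Theorem \ref{C-AV} and Proposition \ref{BCD}(2) are in hand. The only small sanity check worth performing is that the partitions produced are valid of type $C$ (odd parts occur with even multiplicity), which holds trivially since every $[2^{k(\lambda)},1^{2n-2k(\lambda)}]$ has the $1$'s appearing with even multiplicity $2n-2k(\lambda)$.
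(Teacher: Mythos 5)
Your proposal is correct and is exactly the argument the paper intends: the corollary is an immediate translation of Theorem \ref{C-AV} via Proposition \ref{BCD}(2), using $t_1=z$ and substituting $k(\lambda)=2q_2^{\odd}$, $2q_2^{\ev}+1$, or $n$ in the three cases. The paper gives no separate proof, so there is nothing further to compare.
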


\begin{Rem}
    From \cite[Lem. 3.17]{EHW}, we know that the generalized Verma module $N(\lambda)$ will be irreducible when $\lambda$ is not integral or half integral. Thus we will have $V(\Ann(L(\lambda)))=V(\Ann(N(\lambda)))=\overline{\mathcal{O}}_{{\bf p}}$ with ${\bf p}=[2^{n}]$, which is just the Richardson nilpotent  orbit  $G\cdot
    \mathfrak{p}^+$, see \cite{BZ}.
\end{Rem}

\section{Annihilator varieties  for type \texorpdfstring{$D_{n}$}{}}\label{ann-2d}
In this section, we consider  the case of type $D_n$.
\begin{Thm}\label{so(2,2n-2)}
		Let $L(\lambda)$ be a Harish-Chandra  module of $SO(2,2n-2)$ with  highest weight $\lambda-\rho=\lambda_0+z\xi\in\mathfrak{h}^*$. Denote $\lambda_0=(\lambda_1,\lambda_2,\cdots,\lambda_n)$, then $V(\Ann(L(\lambda)))=\overline{\mathcal{O}}_{{\bf p}}$ with
	\[	{\bf p}=\left\{
\begin{array}{ll}
	[1^{2n}], &
	\textnormal{~if~}z\in \lambda_2-\lambda_1+\mathbb{Z}_{\geq 0}, \\	
	\lbrack2^{2},1^{2n-4}\rbrack, & \textnormal{~if~}z+\lambda_1-\lambda_2\in\mathbb{Z}\textnormal{~and~}-|\lambda_n|-\lambda_1-n+1<z\le \lambda_2-\lambda_1-1,\\
	\lbrack 3,1^{2n-3}\rbrack, &
	\textnormal{~otherwise.}\\			
\end{array}	
\right.
\]				
\end{Thm}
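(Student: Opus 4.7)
The plan is to mirror the proof of Theorem~\ref{Bpartition}, substituting the type $D$ machinery of Proposition~\ref{Annv}(2)---the $D$-collapse and the $C$-type metaplectic special partition from the H-algorithm of \cite[Chap.~8]{BMW}---for the type $B$ counterpart. First, I would invoke \cite{EHW} to parametrize $\lambda_0=(\lambda_1,\ldots,\lambda_n)$ with $\lambda_2\ge\cdots\ge\lambda_{n-1}\ge|\lambda_n|$, $\lambda_i-\lambda_j\in\mathbb{Z}$ for $2\le i,j\le n$, and (by $(\lambda_0+\rho,\beta)=0$ with $\beta=e_1+e_2$) $\lambda_1+\lambda_2=-2n+3$. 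Setting $\xi=(1,0,\ldots,0)$ gives $\lambda=(t_1,\ldots,t_n)$ with $t_1=\lambda_1+z+n-1$ free and $t_2>\cdots>t_{n-1}>|t_n|$. The three cases of the theorem correspond exactly to $t_1\ge t_2$; to $t_1-t_2\in\mathbb{Z}_{<0}$ with $t_1>-|t_n|$; and to everything else.

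Case~1 is immediate: $\lambda$ is regular dominant, so $L(\lambda)$ is finite-dimensional and $V(\Ann(L(\lambda)))=\{0\}=\overline{\mathcal{O}}_{[1^{2n}]}$. In Case~2, a Robinson--Schensted insertion on $\lambda^-=(t_1,\ldots,t_n,-t_n,\ldots,-t_1)$ shows that $t_1$ slots between $-|t_n|$ and $t_2$ in the otherwise strictly decreasing tail; its insertion, together with that of $-t_1$, extends the natural single column by exactly one box each, giving $P(\lambda^-)$ with column heights $2n-2$ and $2$. Hence ${\bf p}(\lambda^-)=[2^2,1^{2n-4}]$, a valid $D$-partition whose transpose $[2n-2,2]$ is a $C$-partition, so it is already $D$-special; Proposition~\ref{Annv}(2) with $\mathbf{p}_0=[2^2,1^{2n-4}]$ then yields $V(\Ann(L(\lambda)))=\overline{\mathcal{O}}_{[2^2,1^{2n-4}]}$.

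Case~3 splits into three sub-cases, each producing $[3,1^{2n-3}]$ after $D$-collapse. When $\lambda$ is integral with $t_1\le-|t_n|$, RS gives $P(\lambda^-)$ with column heights $2n-2,1,1$, so ${\bf p}(\lambda^-)=[3,1^{2n-3}]$ ($D$-special). When $\lambda$ is half-integral, the key H-algorithm computations are that the two-box tableau $P((t_1,-t_1))$ yields the $C$-metaplectic special partition $[2]$ regardless of the sign of $t_1$, while the single-column tableau of height $2n-2$ yields $[2,1^{2n-4}]$; in either orientation (depending on which of $[\lambda]_1$ or $[\lambda]_2$ contains $(t_1)$), the column-union of $\mathbf{p}_0$ with $\mathbf{p}_{\frac{1}{2}}$ has row-partition either $[3,1^{2n-3}]$ directly or $[4,1^{2n-4}]$, and the $D$-collapse of the latter is $[3,1^{2n-3}]$. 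Finally, when $\lambda$ is neither integral nor half-integral, $(t_1)\in[\lambda]_3$ contributes $2\mathbf{p}_1=[2]$, and the analogous column-union with $\mathbf{p}_0=[1^{2n-2}]$ or $\mathbf{p}_{\frac{1}{2}}=[2,1^{2n-4}]$ again $D$-collapses to $[3,1^{2n-3}]$.

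The main obstacle is the careful application of the H-algorithm in the half-integral branch: one must verify that its ``even boxes cannot move'' rule for type $D$ converts the one-element subsequence's tableau of shape $[1,1]$ to the metaplectic special partition $[2]$ (rather than to itself), and that the long column $[1^{2n-2}]$ maps to $[2,1^{2n-4}]$; only then does every sub-case of Case~3 $D$-collapse uniformly to $[3,1^{2n-3}]$ rather than to the smaller $[2^2,1^{2n-4}]$. A secondary check is that none of the target partitions are very even, so the orbit disambiguation of Proposition~\ref{bcd}(4) between $\mathcal{O}^I_{\bf d}$ and $\mathcal{O}^{II}_{\bf d}$ is not triggered.
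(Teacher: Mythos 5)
Your proposal follows the same route as the paper: the same split into integral, half-integral and remaining cases, the same Robinson--Schensted computations on $\lambda^-$ (resp.\ on the pieces $\lambda_{Y_1}=(t_2,\dots,t_n)$ and $\lambda_{Y_2}=(t_1)$), and the same appeal to Proposition~\ref{Annv}(2) with the $D$-collapse and the $C$-type metaplectic special partition. However, there are two concrete inaccuracies. First, your case division puts $t_1=t_2$ (i.e.\ $z=\lambda_2-\lambda_1-1$) into the finite-dimensional case, whereas the theorem's first case is $z\ge\lambda_2-\lambda_1$, i.e.\ $t_1>t_2$; since $e_1\pm e_j$ is noncompact, $t_1=t_2$ is a legitimate singular parameter for which $L(\lambda)$ is infinite-dimensional and belongs to the $[2^2,1^{2n-4}]$ branch.

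Second, and more seriously for the logic of Case~2, your RS computation only covers the sub-case $t_n\ge 0$. When $t_n<0$ the three pairs $(t_1,t_2)$, $(t_n,-t_n)$ and $(-t_2,-t_1)$ are disjoint increasing subsequences of $\lambda^-$, so by Greene's theorem $\mathbf{p}(\lambda^-)=[2^3,1^{2n-6}]$, not $[2^2,1^{2n-4}]$; this is \emph{not} a $D$-partition, so your assertion that the shape ``is already $D$-special'' and that no collapse is needed fails exactly there. The same omission occurs in the integral sub-case of Case~3: for $t_1\le -|t_n|$ with $t_n<0$ one has the length-four increasing subsequence $t_1<t_n<-t_n<-t_1$, giving $\mathbf{p}(\lambda^-)=[4,1^{2n-4}]$ rather than $[3,1^{2n-3}]$. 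The paper records both possible shapes in each sub-case and obtains the stated answer only after passing to the associated special partition ($[2^3,1^{2n-6}]\mapsto[2^2,1^{2n-4}]$ and $[4,1^{2n-4}]\mapsto[3,1^{2n-3}]$), so the final partitions are unaffected, but as written your argument omits a sub-case in which the collapse step is genuinely needed. (A cosmetic point: in the half-integral branch with $\mathbf{p}_0=[1,1]$ and $\mathbf{p}_{\frac{1}{2}}=[2,1^{2n-4}]$ the column union is $[3,2,1^{2n-5}]$, not one of the two shapes you list, though its $D$-collapse is again $[3,1^{2n-3}]$.)
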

\begin{proof}
	From \cite{EHW} we know that $\lambda-\rho=\lambda_0+z\xi$ with $\lambda_0=(\lambda_1,\lambda_2,\cdots,\lambda_n)$, $\lambda_2\geq\lambda_3\geq\cdots\lambda_{n-1}\geq|\lambda_n|$, $\lambda_i-\lambda_j\in\mathbb{N} \textnormal{~and~}2\lambda_i\in\mathbb{N}$ for $2\le i<j\le n$. By the normalization $( \lambda_0+\rho,\beta )=0$, we have $\lambda_1+\lambda_2=-2n+3$.

\begin{enumerate}[leftmargin=18pt]
\item When $\lambda $ is integral,  we have $\xi=(1,0,\dots,0)$ and 
	\begin{align*}
	  \lambda=(z+\lambda_1+n-1,\lambda_2+n-2,\cdots,\lambda_{n-1}+1,\lambda_n):=(t_1,...,t_n).
	\end{align*}
 Thus we have   $t_2> t_3>\cdots> t_{n-1}> |t_n|$.
 
 When $t_1>t_2$ (equivalently $z\in \lambda_2-\lambda_1+\mathbb{Z}_{\geq 0}$), $L(\lambda)$ will be finite-dimensional and $V(\Ann(L(\lambda)))=\overline{\mathcal{O}}_{{\bf p}}$ with the partition ${\bf p}=[1^{2n}]$ corresponding to the trivial orbit of type $D_n$.

When $t_1\le t_2$,  by using the Robinson-Schensted algorithm for $\lambda^-$,
  we can get  a Young tableau $P(\lambda^-)$ which consists of
  at most four columns. When $-|t_n|<t_1\le t_2$ (equivalently $-|\lambda_n|-\lambda_1-n+1<z\le \lambda_2-\lambda_1-1$), from the construction process, we can see that $P(\lambda^-)$ will be a Young tableau consisting of two columns with shape ${\bf p}(\lambda^-)=[2^2,1^{2n-4}]$ or  $[2^3,1^{2n-6}]$. 
 In this case, the special partition of type $D$ corresponding to ${\bf p}(\lambda^-)$ is ${\bf p}=[2^2, 1^{2n-4}]$. Therefore $V(\Ann(L(\lambda)))=\overline{\mathcal{O}}_{{\bf p}}$ by Proposition \ref{Annv}.

When $t_1\leq -|t_n|$, $P(\lambda^-)$
 will be a Young tableau with shape ${\bf p}(\lambda^-)=[3,1^{2n-3}]$ or  ${\bf p}(\lambda^-)=[4,1^{2n-4}]$.  In this case, the special partition of type $D$ corresponding to ${\bf p}(\lambda^-)$ is  ${\bf p}=[3, 1^{2n-3}]$. Therefore $V(\Ann(L(\lambda)))=\overline{\mathcal{O}}_{{\bf p}}$ by Proposition \ref{Annv}.

\item When $\lambda$ is half integral, 
we have $\lambda_{Y_1}=(t_2,\cdots,t_n)\in [\lambda]_{1}$ and $\lambda_{Y_2}=(t_1)\in [\lambda]_2$ (when $z\in \frac{1}{2}+\mathbb{Z}$), or $\lambda_{Y_1}\in [\lambda]_{2}$ and $\lambda_{Y_2}=(t_1)\in [\lambda]_1$ (when $z\in \frac{1}{2}+\mathbb{Z}$). 
 Then we can get a Young  tableau $P(\lambda_{Y_1}^-)$ with shape ${\bf p}(\lambda_{Y_1}^-)=[1^{2n-2}]$ or $[2,1^{2n-4}]$, and a Young  tableau $P(\lambda_{Y_2}^-)$ with shape
\[	{\bf p}(\lambda_{Y_2}^-)=\left\{
		\begin{array}{ll}
			[2], & \textnormal{~if~}t_1\leq 0,\\	   	  \lbrack 1,1\rbrack,  & \textnormal{~if~}t_1> 0.
		\end{array}	
		\right.
		\]		

    \begin{enumerate}[leftmargin=18pt]
    \item If $t_1\in\frac{1}{2}+\mathbb{Z}$ (equivalently $z\in \frac{1}{2}+\mathbb{Z}$), we will have $\lambda_{Y_1}\in [\lambda]_{1}$ and $ \lambda_{Y_2}\in [\lambda]_{2}$. From Proposition \ref{Annv}, we have $\mathbf{p}_{0}=[1^{2n-2}]$ and 
    the $C$-type metaplectic
special partition ${\mathbf p}_{\frac{1}{2}}$ corresponding to $p(\lambda_{Y_2}^-)$ is 
${\mathbf p}_{\frac{1}{2}}=[2]$. 
    Therefore $V(\Ann(L(\lambda)))=\overline{\mathcal{O}}_{{\bf p}}$ with  ${\bf p}=(\mathbf{p}_{0} {\stackrel{c}{\sqcup}} {\mathbf p}_{\frac{1}{2}})_B=[3,1^{2n-3}]$ by Proposition \ref{Annv}.

    \item If $t_1\in\mathbb{Z}$  (equivalently $z\in \frac{1}{2}+\mathbb{Z}$),  we will have $\lambda_{Y_1}\in [\lambda]_{2}$ and $ \lambda_{Y_2}\in [\lambda]_{1}$. From Proposition \ref{Annv},  the $C$-type metaplectic
special partition  corresponding to $p(\lambda_{Y_1}^-)$ is 
$\mathbf{p}_{\frac{1}{2}}=[2,1^{2n-4}]$ and $\mathbf{p}_{0}=[1,1]$ if $t_1>0$ or $\mathbf{p}_{0}=[2]$ if $t_1\leq 0$. Therefore $V(\Ann(L(\lambda)))=\overline{\mathcal{O}}_{{\bf p}}$ with 
    $	{\bf p}=(\mathbf{p}_{0} {\stackrel{c}{\sqcup}} {\mathbf p}_{\frac{1}{2}})_B	=\lbrack 3,1^{2n-3}\rbrack
		$ by Proposition \ref{Annv}.
    \end{enumerate}    
   \item When $\lambda$ is not integral or half integral, 
we have $\lambda_{Y_1}=(t_2,\cdots,t_n)\in [\lambda]_{1,2}$  and $\lambda_{Y_2}=(t_1)\in [\lambda]_3$ (equivalently $z\notin \frac{1}{2}\mathbb{Z}$). 
 Then we can get a Young  tableau $P(\lambda_{Y_1}^-)$ with shape ${\bf p}(\lambda_{Y_1}^-)=[1^{2n-2}]$ or $[2,1^{2n-4}]$, and a Young  tableau $P(\lambda_{Y_2})$ with shape $[1]$.

                   \begin{enumerate}[leftmargin=18pt]
    \item If  $\lambda_{Y_1}\in [\lambda]_{1}$, we will have $\mathbf{p}_{0}=[1^{2n-2}]$. From $P(\lambda_{Y_2})$, we can get a partition ${\bf p}_1=[1]$.
 Therefore $V(\Ann(L(\lambda)))=\overline{\mathcal{O}}_{{\bf p}}$ with  ${\bf p}=(\mathbf{p}_{0} {\stackrel{c}{\sqcup}} {2\mathbf p}_{1})_B=[3,1^{2n-3}]$ by Proposition \ref{Annv}.

    \item If  $\lambda_{Y_1}\in [\lambda]_{2}$, the $C$-type metaplectic
special partition  corresponding to ${\bf p}(\lambda_{Y_1}^-)$ is $\mathbf{p}_{\frac{1}{2}}=[2,1^{2n-4}]$. From $P(\lambda_{Y_2})$, we can get a partition ${\bf p}_1=[1]$.
    Therefore $V(\Ann(L(\lambda)))=\overline{\mathcal{O}}_{{\bf p}}$ with  ${\bf p}=(\mathbf{p}_{\frac{1}{2}} {\stackrel{c}{\sqcup}} {2\mathbf p}_{1})_B=[3,1^{2n-3}]$ by Proposition \ref{Annv}.

\end{enumerate}

\end{enumerate}

This finishes the proof.

\end{proof}

\begin{Rem}
    From \cite[Lem. 3.17]{EHW}, we know that the generalized Verma module $N(\lambda)$ will be irreducible when $\lambda$ is not integral. Thus we will have $V(\Ann(L(\lambda)))=V(\Ann(N(\lambda)))=\overline{\mathcal{O}}_{{\bf p}}$ with ${\bf p}=[3,1^{2n-3}]$, which is just the Richardson nilpotent  orbit  $G\cdot
    \mathfrak{p}^+$, see \cite{BZ}.
\end{Rem}


\begin{Thm}\label{D-AV}
		Let $L(\lambda)$ be a Harish-Chandra module of $SO^{\ast}(2n)$ with  highest weight $\lambda-\rho=\lambda_0+z\xi\in\mathfrak{h}^*$. Suppose $q_2$ is  the length of second column of the Young
tableau $P(\lambda^-)$.
Denote $\lambda_0=(\lambda_1,\lambda_2,\cdots,\lambda_n)$, then $V(\Ann(L(\lambda)))=\overline{\mathcal{O}}_{{\bf p}}$ with	
			\[	{\bf p}=\left\{
		\begin{array}{ll}
			[2^{2q_2^{\ev}},1^{2n-4q_2^{\ev}}], &
			\textnormal{~if~}z\in\mathbb{Z}, \\				\lbrack 2^{n}\rbrack, &
			\textnormal{~if~}n\in 2\mathbb{Z}, \\			
			\lbrack 2^{n-1},1,1\rbrack, &
			\textnormal{~if~}n\in 2\mathbb{Z}+1. 				
		\end{array}	
		\right.
		\]		
\end{Thm}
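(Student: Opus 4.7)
My plan is to follow the same template as the proofs of Theorems \ref{Bpartition}, \ref{C-AV}, and \ref{so(2,2n-2)}. First I would set up the parametrization via \cite{EHW}: for $SO^*(2n)$ one has $\xi=(\tfrac{1}{2},\ldots,\tfrac{1}{2})$, the normalization $\lambda_1+\lambda_2=-2n+3$, and the highest weight realizes as
\[
\lambda=(t_1,\ldots,t_n),\qquad t_i=\lambda_i+\tfrac{z}{2}+(n-i),
\]
with $\mathfrak{k}$-dominance of $\lambda-\rho$ forcing $t_1>t_2>\cdots>t_n$ strictly and all $t_i-t_j\in\mathbb{Z}$. Consequently the trichotomy $[\lambda]_1/[\lambda]_2/[\lambda]_3$ collapses to a dichotomy: $\lambda\in[\lambda]_{1,2}$ when $z\in\mathbb{Z}$, and $\lambda\in[\lambda]_3$ otherwise.

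For $z\in\mathbb{Z}$ I would work with $\lambda^-=(t_1,\ldots,t_n,-t_n,\ldots,-t_1)$. Since $t_n<t_{n-1}<\cdots<t_1$ and $-t_n>-t_{n-1}>\cdots>-t_1$, both halves are strictly decreasing, so by Greene's theorem $P(\lambda^-)$ is supported in at most two columns; tracking the insertion shows its shape is exactly $[2^{q_2},1^{2n-2q_2}]$, where $q_2$ is the second column length. Then I would apply the $D$-collapse via the H-algorithm prescribed in Proposition~\ref{Annv}(2). When $q_2$ is even, the part $2$ already has even multiplicity and no move is required; when $q_2$ is odd, the only legal move relocates the odd box at $(q_2,2)$ (position sum $q_2+2$ is odd) to the first empty row at $(2n-q_2+1,1)$, whose position sum is also odd and which contains no competing odd box. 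This yields $[2^{q_2-1},1^{2n-2q_2+2}]$. The two parities then fuse into $[2^{2q_2^{\ev}},1^{2n-4q_2^{\ev}}]$ via \eqref{eq:ev-od}. For the half-integral sub-case $\lambda\in[\lambda]_2$, the computation goes through the $C$-type metaplectic partition $\mathbf{p}_{\frac{1}{2}}=((\mathbf{p}(\lambda^-)^t)_D)^t$ before the final $D$-collapse of $\mathbf{d}_\lambda$; a short transpose--collapse--transpose check confirms the outcome is the same as for the integral route.

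For $z\notin\mathbb{Z}$, we have $[\lambda]_3=\{\lambda\}$, and the strict decrease of $(t_1,\ldots,t_n)$ makes $P(\tilde{\lambda})$ a single column of height $n$, so $\mathbf{p}_1=[1^n]$ and $\mathbf{d}_\lambda=2\mathbf{p}_1=[2^n]$. When $n$ is even, $[2^n]$ is already a $D$-partition and $\mathbf{p}=[2^n]$; when $n$ is odd, the multiplicity of $2$ is odd and the H-algorithm moves the odd box at $(n,2)$ into the empty new row at $(n+1,1)$, producing $\mathbf{p}=[2^{n-1},1,1]$. I expect the main obstacle to be the bookkeeping inside Case~1, specifically reconciling the $\lambda\in[\lambda]_2$ construction (through the $C$-type metaplectic partition and its transpose operations) with the direct $D$-collapse computation for $\lambda\in[\lambda]_1$, so that a single uniform partition $[2^{2q_2^{\ev}},1^{2n-4q_2^{\ev}}]$ emerges in both sub-cases. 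Once that is in hand, the remainder is a straightforward application of Robinson--Schensted and Proposition~\ref{Annv}.
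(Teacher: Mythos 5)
Your proposal is correct and follows essentially the same route as the paper: realize $\lambda=(t_1,\dots,t_n)$ as a strictly decreasing integrally-spaced sequence, split into $z\in\mathbb{Z}$ versus $z\notin\mathbb{Z}$, compute the two-column shape ${\bf p}(\lambda^-)=[2^{q_2},1^{2n-2q_2}]$ (resp.\ ${\bf p}_1=[1^n]$, $\mathbf{d}_\lambda=[2^n]$), and apply the $D$-collapse via Proposition \ref{Annv}. If anything, you are more careful than the paper, which does not separately discuss the $[\lambda]_2$ (metaplectic) sub-case inside the integral branch; your transpose--collapse--transpose check confirming it yields the same partition $[2^{2q_2^{\ev}},1^{2n-4q_2^{\ev}}]$ is a worthwhile addition.
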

\begin{proof}
From \cite{EHW} we know that $\lambda-\rho=\lambda_0+z\xi$ with $\lambda_0=(\lambda_1,\lambda_2,\cdots,\lambda_n)$, and $\lambda_i-\lambda_j\in\mathbb{N}$ for $i\le j$. By the normalization $( \lambda_0+\rho,\beta )=0$, we have $\lambda_1+\lambda_2=-2n+3$ and $2\lambda_i\in\mathbb{Z}$ for $1\leq i\leq n$.	


\begin{enumerate}[leftmargin=18pt]
    \item When $\lambda$ is integral (equivalently $z\in\mathbb{Z}$), we have $\xi=(\underbrace{\frac{1}{2},\frac{1}{2},\dots,\frac{1}{2}}_{n})$ and $$\lambda=(\frac{1}{2}z+\lambda_1+n-1,\frac{1}{2}z+\lambda_2+n-2,\cdots,\frac{1}{2}z+\lambda_{n-1}+1,\frac{1}{2}z+\lambda_n):=(t_1,t_2,\cdots,t_n).$$
Thus we have   $t_1> t_2>\cdots> t_{n}$.

    
By using the Robinson-Schensted algorithm for $\lambda^-$,
  we can get  a Young tableau $P(\lambda^-)$ which consists of
at most two columns with $c_1(P(\lambda^-))=2n-q_2$ and $c_2(P(\lambda^-))=q_2$. Thus ${\bf p}(\lambda^-)=[2^{q_2},1^{2n-2q_2}]$. In this case the special partition of type $D$ corresponding to ${\bf p}(\lambda^-)$ is ${\bf p}=[2^{2q_2^{\ev}},1^{2n-4q_2^{\ev}}]$ by  (\ref{eq:ev-od}). Therefore $V(\Ann(L(\lambda)))=\overline{\mathcal{O}}_{{\bf p}}$ by Proposition \ref{Annv}.

\item When $\lambda$ is not integral (equivalently $z\notin\mathbb{Z}$), we will have $\lambda\in[\lambda]_3$. From Proposition \ref{Annv} we have ${\bf p}_1=p(\lambda_{Y_1})=[1^n]$.  Therefore $V(\Ann(L(\lambda)))=\overline{\mathcal{O}}_{{\bf p}}$ with $${\bf p}=( 2\mathbf{p}_{1} )_D=\left\{
		\begin{array}{ll}
			[2^n], &
			\textnormal{~if~}n\in 2\mathbb{Z}, \\			
			\lbrack 2^{n-1},1,1\rbrack, &
			\textnormal{~if~}n\in 2\mathbb{Z}+1. 					\end{array}	
		\right.
$$  by Proposition \ref{Annv}.  
\end{enumerate}
\end{proof}

\begin{Cor}
    Keep notations as above. When $V(L(\lambda))=\overline{\mathcal{O}}_{k(\lambda)}$, we will have 
$V(\Ann(L(\lambda)))=\overline{\mathcal{O}}_{{\bf p}}$ with
		\[	{\bf p}=
			[2^{2k(\lambda)},1^{2n-4k(\lambda)}].
		\]		
\end{Cor}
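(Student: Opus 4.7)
The plan is to derive this corollary directly from Theorem \ref{D-AV} together with Proposition \ref{BCD}(3), by matching up their parameterizations. The two statements describe the same object ($L(\lambda)$ for $SO^*(2n)$) under slightly different hypotheses: Proposition \ref{BCD}(3) splits cases according to whether $t_1 \in \tfrac{1}{2}\mathbb{Z}$, while Theorem \ref{D-AV} splits according to whether $z \in \mathbb{Z}$. So the first preparatory step is to check that these two conditions coincide in the present setting.

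Concretely, because $\lambda_i-\lambda_j \in \mathbb{Z}$ and $2\lambda_i \in \mathbb{Z}$, all entries $\lambda_i$ lie in a common coset of $\mathbb{Z}$ (all integers or all half-integers). Since $t_1 = \tfrac{z}{2} + \lambda_1 + n - 1$, a short arithmetic check shows $t_1 \in \tfrac{1}{2}\mathbb{Z}$ iff $z \in \mathbb{Z}$. This identification done, the rest of the argument is a case-by-case comparison.

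In the integral case ($z \in \mathbb{Z}$, equivalently $t_1 \in \tfrac{1}{2}\mathbb{Z}$), Proposition \ref{BCD}(3) gives $k(\lambda) = q_2^{\ev}$, so the formula $[2^{2k(\lambda)},1^{2n-4k(\lambda)}]$ becomes $[2^{2q_2^{\ev}},1^{2n-4q_2^{\ev}}]$, which is exactly the first line of Theorem \ref{D-AV}. In the non-integral case ($z \notin \mathbb{Z}$), Proposition \ref{BCD}(3) gives $k(\lambda) = \lfloor n/2 \rfloor$. Plugging in: if $n$ is even the formula yields $[2^n]$, and if $n$ is odd it yields $[2^{n-1},1,1]$ (using $2n - 4\cdot\tfrac{n-1}{2} = 2$). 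Both of these agree with the corresponding lines of Theorem \ref{D-AV}.

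I do not anticipate a real obstacle: the corollary is a bookkeeping consequence of the two earlier results, and the only subtle point is the equivalence of the two dichotomies on $\lambda$. It is worth noting that an entirely analogous corollary was already recorded for types $A$ and $C$; the same style of comparison (value of $k(\lambda)$ from Proposition \ref{BCD} versus the partition from the main theorem of the corresponding section) produces those corollaries, so the present argument is the $D$-type counterpart of a pattern already established in the paper.
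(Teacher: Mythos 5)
Your proposal is correct and matches the paper's (implicit) derivation: the corollary is stated without proof precisely because it is the bookkeeping comparison of Theorem \ref{D-AV} with Proposition \ref{BCD}(3) that you carry out, including the check that $t_1\in\tfrac{1}{2}\mathbb{Z}$ is equivalent to $z\in\mathbb{Z}$ via $t_1=\tfrac{z}{2}+\lambda_1+n-1$ and $2\lambda_1\in\mathbb{Z}$. Nothing further is needed.
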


\begin{Rem}
	 From \cite[Lem. 3.17]{EHW}, we know that the generalized Verma module $N(\lambda)$ will be irreducible when $\lambda$ is not integral. Thus when $n$ is even, we will have $$V(\Ann(L(\lambda)))=V(\Ann(N(\lambda)))=\overline{\mathcal{O}}_{{\bf p}}$$ with ${\bf p}=[2^{n}]$, which is just the Richardson nilpotent  orbit  $G\cdot
    \mathfrak{p}^+$, see \cite{BZ}.
 
 When $G=SO^{\ast}(2n)$, from Proposition \ref{bcd}, we know that each very even partition ${\bf p}$ corresponds to two special nilpotent orbits $\overline{\caO}_{{\bf p}}^{\uppercase\expandafter{\romannumeral1}}$ and $\overline{\caO}_{{\bf p}}^{\uppercase\expandafter{\romannumeral2}}$. 
 Note that in $SO^{\ast}(2n)$,  ${\bf p}=[2^n]$ is a very even partition when $n$ is even, which  corresponds to two nilpotent orbits. In fact, the Levi subalgebra $\mathfrak{k}$ of our parabolic subalgebra $\mathfrak{q}=\mathfrak{k}\oplus\mathfrak{p}^+$ is 
 of type $I$
 by \cite[Lem. 7.3.2]{CM}. From \cite[Lem. 7.3.3]{CM}, we know that the numeral of $\mathcal{O}$ appeared in $V(\Ann(L(\lambda)))$ is $I$ if $n$ is even.
  In other words, we have $$V(\Ann(L(\lambda)))=      \overline{\caO}_{[2^n]}^{\uppercase\expandafter{I}}\text{~when~ $n$ ~is~ even}.
 $$
\end{Rem}

\section{Annihilator varieties  for types \texorpdfstring{$E_{6}$}{} and \texorpdfstring{$E_{7}$}{} }\label{ann-2e}
In this section,  we consider annihilator varieties  of highest weight Harish-Chandra modules for two exceptional groups of type $E_{6(-14)}$ and $E_{7(-25)}$. We recall some notations from \cite{Ca85} and \cite{CM}.  

Let $\mathfrak{g}$ be a semisimple Lie algebra. Let $\mathfrak{l}$ be a Levi subalgebra of $\mathfrak{g}$ and  
 $\mathfrak{p}_{\mathfrak{l}}$ be a distinguished parabolic subalgebra of the semisimple algebra $[\mathfrak{l},\mathfrak{l}]$.
 From  \cite[Thm. 8.2.12]{CM}, we know that 
 a nilpotent orbit is corresponding to an ordered pair $(\mathfrak{l},\mathfrak{p}_{\mathfrak{l}})$.
 From \cite{BC}, we denote this orbit by the label $X_N(a_i)$, where $X_N$ is the Cartan type of the semisimple part of $\mathfrak{l}$ and $i$ is the number of simple roots in any Levi subalgebra of $\mathfrak{p}_{\mathfrak{l}}$. If $i=0$, one writes $X_N$ rather than $X_N(a_0)$.  When $\mathfrak{g}$ is of type $E_7$,   it  has two  non-conjugate isomorphic Levi subalgebras. One subalgebra  is chosen arbitrarily and labeled with a prime, and the other one has a double prime. 

In the following tables we give four pieces of information about the nilpotent orbits which will be used: Bala-Carter label, dimension, fundamental group and whether the orbit is special or not. The Bala-Carter label $\mathcal{K}=sX_N$ denotes $s$ copies of $X_N$.
                                                                

\begin{table}[htp]
\centering
\renewcommand{\arraystretch}{1.5}
\setlength\tabcolsep{10pt}
\caption{Some nilpotent orbits in type $E_6$}\label{e6} 
\begin{tabular}{cccc}
\toprule[1pt] 
${\rm label}~\mathcal{K}$& $\dim\mathcal{O}$ & $\pi_1(\mathcal{O})$  &${\rm special~orbit}$\\
\hline 
$0$ &$0$ &  $1$  &{\rm yes}\\
		 $A_1 $& $22$ & $1$  &${\rm yes}$\\
		 $2A_1$& $32 $ & $1$ & ${\rm yes}$\\
		 $3A_1$ & $40$  & $1$ & ${\rm no}$\\
		 $A_2$&  $42$ & $S_2 $& ${\rm yes}$\\
\bottomrule[1pt]
\end{tabular}

\end{table}

\begin{table}[htp]
\centering
\renewcommand{\arraystretch}{1.5}
\setlength\tabcolsep{10pt}
\caption{Some nilpotent orbits in type $E_7$}\label{e7} 
\begin{tabular}{cccc}
\toprule[1pt] 
${\rm label}~\mathcal{K}$& $\dim\mathcal{O}$ & $\pi_1(\mathcal{O})$  &${\rm special~orbit}$\\
\hline 
$0$ &$0$ &  $1$  &{\rm yes}\\
			 $A_1$ & $34$  & $1$  &${\rm yes}$\\
		$ 2A_1$& $52$  & $1$ & ${\rm yes}$\\
		 $(3A_1)''$ & $54$  &$\mathbb{Z}/2\mathbb{Z}$ & ${\rm yes}$\\
		 $(3A_1)'$& $ 64$ &  $1$ & ${\rm no}$\\
		 $A_2$ &$ 66$  & $S_2$  &${\rm yes}$\\
\bottomrule[1pt]
\end{tabular}

\end{table}

\begin{Thm}\label{e6-GK}
	Let $L(\lambda)$ be a highest weight Harish-Chandra module of the exceptional Lie group $E_{6(-14)}$ with highest weight $\lambda-\rho=\lambda_0+z\xi\in\mathfrak{h}^*$ and $V(L(\lambda))=\overline{\caO}_{k(\lambda)}$. Denote $\lambda_0=(\lambda_1,\lambda_2,\cdots,\lambda_8)$, then
	$V(\Ann(L(\lambda)))=\overline{\mathcal{O}}_{\mathcal{K}}$ with	
				\[	\mathcal{K}=\left\{
			\begin{array}{ll}
			\vspace{1ex}
	0, & \textnormal{~if~}z>-(\lambda_0, \alpha_1)-1 \textnormal{~and~}\lambda \textnormal{~is~integral},\\	\vspace{1ex}	
		A_1, &\textnormal{~if~}-4+\min\{-(\lambda_0,\beta_{1,2})\}<z\le-(\lambda_0,\alpha_1)-1 \textnormal{~and~}\lambda \textnormal{~is~integral},\\		\vspace{1ex}
		2A_1, & 	\textnormal{~otherwise.}\\				
			\end{array}	
			\right.
			\]	
Here $\beta_{1,2}$ means the root $\beta_1$ and $\beta_2$ in Proposition \ref{E}	(1).				
\end{Thm}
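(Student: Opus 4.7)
The plan is to combine three ingredients already developed in the preliminaries: Proposition \ref{E}, which identifies $k(\lambda)$ from the combinatorics of positive roots $\alpha$ with $(\lambda,\alpha^{\vee})>0$; Proposition \ref{k}, which gives the dimension formula $\dim\overline{\mathcal{O}}_{k(\lambda)}=k(\lambda)(h^{\vee}-1)-k(\lambda)(k(\lambda)-1)c$; and Corollary \ref{dim}, which doubles this to obtain $\dim \mathcal{O}_{\Ann(L(\lambda))}$. For $E_{6(-14)}$ one reads off $r=2$, $c=3$, and $h^{\vee}-1=11$ from Table \ref{constants}, so the three possible dimensions of $\mathcal{O}_{\Ann(L(\lambda))}$ are $0$, $22$, $32$ according as $k(\lambda)=0, 1, 2$.

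Next I would consult Table \ref{e6}: the special nilpotent orbits in $E_6$ of dimension at most $32$ are exactly $0$, $A_1$, $2A_1$ of dimensions $0,22,32$ respectively (the non-special orbit $3A_1$ has dimension $40$ and is therefore irrelevant, since by Joseph's theorem the annihilator variety is always a closure of a special orbit attached to a primitive ideal via the Springer correspondence). Hence the three labels $0$, $A_1$, $2A_1$ are uniquely forced by the three possible values of $k(\lambda)$, and no further combinatorial analysis is needed to identify the orbit.

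It remains to translate the two alternatives $\Psi_{\lambda}^+\cap S_2\neq\emptyset$ and $\Psi_{\lambda}^+\cap S_1\neq\emptyset$ from Proposition \ref{E} into explicit inequalities on $z$. Writing $\lambda=\lambda_0+z\xi+\rho$, the membership $\alpha_1\in\Psi_{\lambda}^+$ reads $(\lambda_0,\alpha_1^{\vee})+z(\xi,\alpha_1^{\vee})+(\rho,\alpha_1^{\vee})>0$; a short inner-product computation in the Bourbaki coordinate realization of $E_6$ gives $(\xi,\alpha_1^{\vee})=1$ and $(\rho,\alpha_1^{\vee})=1$, so this condition is equivalent to $z>-(\lambda_0,\alpha_1)-1$. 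Similarly, for $\beta_1,\beta_2$ one verifies $(\xi,\beta_i^{\vee})=1$ and $(\rho,\beta_i^{\vee})=4$, so the existence of some $\beta_i$ lying in $\Psi_{\lambda}^+$ becomes $z>-4-\max\{(\lambda_0,\beta_1),(\lambda_0,\beta_2)\}=-4+\min\{-(\lambda_0,\beta_{1,2})\}$. The non-integral case of Proposition \ref{E} forces $k(\lambda)=2$ directly, and so falls into the $2A_1$ (``otherwise'') branch together with the integral parameters excluded by the two previous inequalities.

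The main obstacle is the explicit verification of the four inner-product identities $(\xi,\alpha_1^{\vee})=(\rho,\alpha_1^{\vee})=1$ and $(\xi,\beta_i^{\vee})=1$, $(\rho,\beta_i^{\vee})=4$ in the chosen $E_6$ coordinates, which requires a careful bookkeeping of the normalization $(\lambda_0+\rho,\beta)=0$ defining $\xi$ together with the Bourbaki realization of the roots in $S_1,S_2$. Once these inner products are in hand, the theorem follows by a direct substitution combined with the ``dimension-to-label'' dictionary extracted from Table \ref{e6}.
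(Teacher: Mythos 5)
Your proposal is correct and follows essentially the same route as the paper: apply Proposition \ref{E} to read off $k(\lambda)$ from the sets $S_1,S_2$, use Corollary \ref{dim} together with the dimension formula and Table \ref{e6} to convert $k(\lambda)=0,1,2$ into the labels $0$, $A_1$, $2A_1$ (the dimensions $0,22,32$ each match a unique orbit in the table), and then rewrite the root conditions as inequalities in $z$ via the inner products $(\xi,\cdot)$ and $(\rho,\cdot)$. The inner-product bookkeeping you flag as the remaining obstacle is exactly what the paper carries out (implicitly) in its case analysis, so nothing essential is missing.
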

\begin{proof}
From \cite{EHW} we know that $\lambda-\rho=\lambda_0+z\xi$ with $\lambda_0=(\lambda_1,\lambda_2,\cdots,\lambda_8)$, $|\lambda_1|\le\lambda_2\le\cdots\lambda_5$, $\lambda_i-\lambda_j\in\mathbb{Z}$ and $2\lambda_i\in\mathbb{Z}$ for $i, j\le5$. By the normalization $(\lambda_0+\rho,\beta)=0$, we have $(\lambda_0,\beta)=-11$.

\begin{enumerate}[leftmargin=18pt]
\item When $\lambda$ is integral, we have $\xi=(0,0,0,0,0,-\frac{2}{3},-\frac{2}{3},\frac{2}{3})$
\begin{enumerate}[leftmargin=18pt]
    \item When $\Psi_{\lambda}^+\cap S_2\ne\emptyset$, equivalently $(\lambda, \alpha_1^{\vee})>0$, we will have
$k(\lambda)=0$ by Proposition \ref{E}.
By Corollary \ref{dim} and Table \ref{e6}, we can get that the label of the corresponding nilpotent orbit is  $\mathcal{K}=0$.    

Hence  $(\lambda, \alpha_1^{\vee})>0$ if and only if 
$z>-(\lambda_0, \alpha_1)-1$. In this case the label $\mathcal{K}=0$.
    
    \item When $\Psi_{\lambda}^+\cap S_1=\emptyset$, equivalently, $(\lambda, \beta_i^{\vee})\leq 0$ for $i=1,2$, we will have
$k(\lambda)=2$ by Proposition \ref{E}.
By Corollary \ref{dim} and Table \ref{e6}, we can get that the label of the corresponding nilpotent orbit is  $\mathcal{K}=2A_1$.

Hence  $(\lambda, \beta_i^{\vee})\leq 0$ if and only if 
$z\le -4+\min\{-(\lambda_0, \beta_i)\}$ for $i=1,2$. In this case the label $\mathcal{K}=2A_1$.

\item When  $\Psi_{\lambda}^+\cap S_1\ne\emptyset$ and $\Psi_{\lambda}^+\cap S_2=\emptyset$, equivalently, $z>-4+\min\{-(\lambda_0, \beta_i)\}$ and $z\le -(\lambda_0, \alpha_1)-1$, we will have
$k(\lambda)=2$ by Proposition \ref{E}.
By Corollary \ref{dim} and Table \ref{e6}, we can get that the label of the corresponding nilpotent orbit is  $\mathcal{K}=A_1$.

Hence   when $-4+\min\{-(\lambda_0, \beta_i)\}<z\le -(\lambda_0, \alpha_1)-1$, the label $\mathcal{K}=A_1$.

\end{enumerate}

\item When $\lambda$ is non-integral, we will have
$k(\lambda)=2$ by Proposition \ref{E}.
By Corollary \ref{dim} and Table \ref{e6}, we can get that the label of the corresponding nilpotent orbit is $\mathcal{K}=2A_1$.
\end{enumerate}

\end{proof}

\begin{Thm}\label{e7-GK}
	Let $L(\lambda)$ be a highest weight Harish-Chandra module of the exceptional Lie group $E_{7(-25)}$ with highest weight $\lambda-\rho=\lambda_0+z\xi\in\mathfrak{h}^*$ and $V(L(\lambda))=\overline{\caO}_{k(\lambda)}$. Denote $\lambda_0=(\lambda_1,\lambda_2,\cdots,\lambda_8)$, then
	$V(\Ann(L(\lambda)))=\overline{\mathcal{O}}_{\mathcal{K}}$ with	
				\[	\mathcal{K}=\left\{
			\begin{array}{ll}
			\vspace{1ex}
	0, & \textnormal{if~}z>-(\lambda_0,\beta_8)-1\textnormal{~and~}\lambda \textnormal{~is~integral},\\	\vspace{1ex}	
		A_1, &\textnormal{if~}-(\lambda_0,\beta_{6,7})-5< z\le-(\lambda_0,\beta_8)-1\textnormal{~and~}\lambda \textnormal{~is~integral},\\		\vspace{1ex}
		2A_1, & \textnormal{if~}  -9+\min\{-(\lambda_0,\beta_{3,4,5})\} <z\le -(\lambda_0,\beta_{6,7})-5\textnormal{~and~}\lambda \textnormal{~is~integral},\\	
		(3A_1)'', & 	\textnormal{otherwise.}\\		
			\end{array}	
			\right.
			\]	
Here $\beta_{3,4,5}$	means the root $\beta_3$, $\beta_4$ and $\beta_5$ in Proposition \ref{E}	(2). Similarly $\beta_{6,7}$ means the root  $\beta_6$ and $\beta_7$ in Proposition \ref{E}	(2).
\end{Thm}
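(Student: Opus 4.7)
The plan is to follow the proof of Theorem \ref{e6-GK} essentially verbatim, using Proposition \ref{E}(2) in place of Proposition \ref{E}(1). That proposition already partitions the integral case into three subcases according to whether $S_3$, $S_2$, or $S_1$ is the first of the three sets to meet $\Psi_\lambda^+$, giving $k(\lambda) = 0, 1, 2$ respectively; the remaining integral subcase (when $\Psi_\lambda^+$ misses $S_1$ altogether) and the entire non-integral case both yield $k(\lambda) = 3$. For each value of $k(\lambda)$ I would compute $\dim \mathcal{O}_{\operatorname{Ann}(L(\lambda))}$ via Corollary \ref{dim} and Proposition \ref{k}: with $h^\vee - 1 = 17$, $c = 4$, and $r = 3$, this yields $0, 34, 52, 54$ respectively. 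Comparing with Table \ref{e7} unambiguously identifies the Bala--Carter labels as $0, A_1, 2A_1, (3A_1)''$; the non-special orbit $(3A_1)'$ has dimension $64 \neq 54$, so dimension alone distinguishes the two copies of $3A_1$ and no extra speciality argument is needed.

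The second step is to translate each root-theoretic condition $\Psi_\lambda^+ \cap S_i \neq \emptyset$ into an inequality on $z = (\lambda, \beta^\vee)$. Writing $\lambda = (\lambda_0 + \rho) + z\xi$, one has
$$(\lambda, \beta_i^\vee) = (\lambda_0, \beta_i^\vee) + (\rho, \beta_i^\vee) + z(\xi, \beta_i^\vee).$$
Since $\xi$ is the fundamental coweight dual to $\beta$ (orthogonal to $\Phi(\mathfrak{k})$ and normalized by $(\xi, \beta^\vee) = 1$) and every $\beta_i$ in $S_1 \cup S_2 \cup S_3$ lies in the $\mathfrak{p}^+$-part of the decomposition with $\beta$-coefficient $1$, we get $(\xi, \beta_i^\vee) = 1$ uniformly. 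A direct coordinate computation from the explicit $\beta_i$ listed in Proposition \ref{E}(2) then gives $(\rho, \beta_i^\vee) = 9, 5, 1$ on $S_1, S_2, S_3$ respectively. Substituting produces exactly the thresholds $-9 + \min\{-(\lambda_0, \beta_{3,4,5})\}$, $-(\lambda_0, \beta_{6,7}) - 5$, and $-(\lambda_0, \beta_8) - 1$ in the statement, with the $\min$ over $S_1$ reflecting the existential quantifier in $\Psi_\lambda^+ \cap S_1 \neq \emptyset$.

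The main obstacle is the numerical verification that $(\rho, \beta_i^\vee)$ is constant on each $S_i$ with the claimed values $9, 5, 1$, together with consistency of the upper-bound expression $-(\lambda_0, \beta_{6,7}) - 5$ across both $\beta_6$ and $\beta_7$, which must be checked against the EHW normalization on $\lambda_0$. Once these inner products are pinned down, assembling the four cases $k(\lambda) = 0, 1, 2, 3$ with their corresponding Bala--Carter labels completes the proof.
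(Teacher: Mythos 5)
Your proposal is correct and follows essentially the same route as the paper: the paper's own proof of Theorem \ref{e7-GK} simply defers to the $E_{6(-14)}$ argument, which is exactly what you reproduce — read off $k(\lambda)\in\{0,1,2,3\}$ from Proposition \ref{E}(2), convert to orbit dimensions $0,34,52,54$ via Corollary \ref{dim} and the formula $k(h^\vee-1)-k(k-1)c$ with $h^\vee-1=17$, $c=4$, match uniquely against Table \ref{e7}, and translate the conditions $\Psi_\lambda^+\cap S_i\neq\emptyset$ into inequalities on $z$ using $(\xi,\beta_i^\vee)=1$ and $(\rho,\beta_i^\vee)=9,5,1$ on $S_1,S_2,S_3$. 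Your computed inner products and thresholds all check out, and your observation that $(3A_1)'$ is excluded by dimension alone is exactly the point that makes the identification unambiguous.
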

\begin{proof}
The argument is similar to the case of $E_{6(-14)}$. So we omit the details here.
\end{proof}

\section{The Gelfand-Kirillov dimension of a unitary highest weight module}\label{GKdim-k}
Proposition \ref{unitary} indicates that if $L(\lambda)$ is a unitary highest weight module, then ${\rm GKdim}\:L(\lambda)$ only depends on the value $z=(\lambda,\beta^{\vee})$. The proof of Proposition \ref{unitary} used some results from Joseph \cite{Jo92}. In this section, we will prove this result by  another  method. 

\begin{Lem}[{\cite[Lem. 5.4]{BHXZ}}]\label{antichain}
	Suppose $\mu=(t_1,\cdots,t_n,s_1,\cdots,s_m)$ is a $(n,m)$-dominant sequence with $t_n\le s_1$. Then applying R-S algorithm to $\mu$, we can get a Young tableau $P(\mu)$ with two columns, and the number of boxes in the second column  is precisely the largest integer $k$ for which we have 
	$$t_{n-k+1}\le s_1,\:t_{n-k+2}\le s_2,\:\cdots,\: t_{n-1}\le s_{k-1},\:t_n\le s_{k}.
	$$
\end{Lem}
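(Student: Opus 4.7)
The plan is to apply Greene's theorem to convert the question about the shape of $P(\mu)$ into a combinatorial statement about monotone subsequences of $\mu$. Since $\mu$ is $(n,m)$-dominant, both $(t_1,\ldots,t_n)$ and $(s_1,\ldots,s_m)$ are strictly decreasing, so any weakly increasing subsequence of $\mu$ contains at most one $t_i$ and one $s_j$ and hence has length at most $2$. By Greene's theorem (row form), the first row of $P(\mu)$ has length at most $2$, so $P(\mu)$ fits into at most two columns. Since it has $n+m$ boxes, the second column has length $n+m-L$, where $L$ is the length of the first column. By Greene's theorem (column form), $L$ equals the length of the longest strictly decreasing subsequence of $\mu$. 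The lemma therefore reduces to proving $L=n+m-k$.

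For the lower bound $L\geq n+m-k$, I would use the maximality of $k$: there exists a smallest index $j^*\in\{1,\ldots,k+1\}$ at which the condition for $k+1$ fails, i.e.\ $t_{n-k+j^*-1}>s_{j^*}$. Then
\[
t_1, t_2, \ldots, t_{n-k+j^*-1},\; s_{j^*}, s_{j^*+1}, \ldots, s_m
\]
is a strictly decreasing subsequence of $\mu$ of length $(n-k+j^*-1)+(m-j^*+1)=n+m-k$, since each block is strictly decreasing by $(n,m)$-dominance and the junction is strict by the choice of $j^*$.

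For the upper bound $L\leq n+m-k$, I would examine any strictly decreasing subsequence $t_{i_1},\ldots,t_{i_a},s_{j_1},\ldots,s_{j_b}$ with $i_1<\cdots<i_a$, $j_1<\cdots<j_b$, and (when $a,b\geq 1$) $t_{i_a}>s_{j_1}$, and argue by contradiction that $i_a\leq n-k+j_1-1$. Indeed, if $i_a\geq n-k+j_1$, then $j:=i_a-n+k$ satisfies $j_1\leq j\leq k$ (using $i_a\leq n$ and $i_a\geq n-k+j_1$), so the defining inequalities of $k$ yield $t_{i_a}=t_{n-k+j}\leq s_j\leq s_{j_1}$, contradicting $t_{i_a}>s_{j_1}$. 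Therefore $a+b\leq i_a+(m-j_1+1)\leq n+m-k$, while the degenerate cases $a=0$ or $b=0$ are handled by the bound $k\leq\min(n,m)$.

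The main obstacle is this upper bound: one must leverage all $k$ of the hypothesized inequalities $t_{n-k+j}\leq s_j$ simultaneously, via the correct reindexing $j=i_a-n+k$, to force the last chosen $t$-index to be small relative to the first chosen $s$-index. Greene's theorem packages the tableau-theoretic side cleanly, but the combinatorial identity $L=n+m-k$ itself---balancing an explicit subsequence construction against the matching contradiction argument---is the substantive content.
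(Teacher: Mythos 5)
The paper itself does not prove this lemma: it is imported verbatim from \cite[Lem.~5.4]{BHXZ}, so there is no in-paper argument to compare yours against. On its own merits, your proof is correct. The reduction via Greene's theorem is the right move: since both blocks of an $(n,m)$-dominant sequence are strictly decreasing, a weakly increasing subsequence has length at most $2$ (and $t_n\le s_1$ gives one of length exactly $2$), so $P(\mu)$ has at most two columns and the second column has length $n+m-L$ with $L$ the longest strictly decreasing subsequence. Your explicit subsequence $t_1,\ldots,t_{n-k+j^*-1},s_{j^*},\ldots,s_m$ gives $L\ge n+m-k$, and the reindexing $j=i_a-n+k$ in the upper bound correctly converts the $k$ hypothesized inequalities into $t_{i_a}\le s_j\le s_{j_1}$, contradicting the strict junction; together these yield $L=n+m-k$ and hence second-column length $k$.

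One boundary point should be made explicit. In the lower bound you assert the existence of $j^*\in\{1,\ldots,k+1\}$ with $t_{n-k+j^*-1}>s_{j^*}$; this presupposes $k+1\le\min(n,m)$, because when $k=\min(n,m)$ the condition for $k+1$ fails only by the indices leaving range, not by an inequality being violated, and no such $j^*$ exists. In that case $n+m-k=\max(n,m)$ and the longer of the two blocks is itself a strictly decreasing subsequence of that length, so the bound still holds; but as written the lower-bound argument silently skips this case, whereas you do flag the analogous degenerate cases ($a=0$ or $b=0$) in the upper bound. With that one sentence added, the proof is complete.
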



Now we recall two root systems
 $Q(\lambda_{0})$ and $ R(\lambda_{0})$ given in \cite{EHW}. Their constructions are as follows.  Let $\Phi_{c}(\lambda_{0})=\{\alpha \in \Phi(\mathfrak{k})| (\lambda_{0}, \alpha)=0\}$. Consider the subroot system $\Psi_1$ of $\Phi$, which is generated by $\pm \beta$ and $\Phi_{c}(\lambda_{0})$. Let  $Q(\lambda_{0})$ be the simple component of  $\Psi_1$ which contains $-\beta.$ If $\Phi$ has two root lengths  and there exist short  roots $\alpha'\in \Phi(\mathfrak{k})$ which are not orthogonal to $Q(\lambda_{0})$ and satisfy $(\lambda_{0},\alpha'^{\vee})=1$, then let $\Psi_2$ be the root system generated by $\pm\beta$, $\Phi_{c}(\lambda_{0})$ and all such $\alpha$.
Let $R(\lambda_{0})$ be the simple component of $\Psi_2$ which contains $-\beta$. If $\Phi$ has only one root length or no such $\alpha$ exists,  we let $R(\lambda_{0})=Q(\lambda_{0})$.

\begin{Thm}\label{kconst}
	When $L(\lambda)$ is a unitary highest weight module and $z=z_k=(\rho,\beta^{\vee})-kc$, we will have 
$V(L(\lambda))=\overline{\mathcal{O}}_{k(\lambda)}$, where $k(\lambda)=r$ if $k\geq r$ and $k(\lambda)=k$
if $0\leq k\leq r-1$.

\end{Thm}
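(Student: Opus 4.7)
The plan is to prove Theorem \ref{kconst} by a case-by-case analysis over Hermitian types, using the explicit descriptions of $V(\Ann(L(\lambda)))$ established in Theorems \ref{AV-a}, \ref{Bpartition}, \ref{C-AV}, \ref{so(2,2n-2)}, \ref{D-AV}, \ref{e6-GK}, and \ref{e7-GK}. The key translation is that $z = z_k = (\rho,\beta^{\vee}) - kc$ is equivalent to $-(\lambda-\rho,\beta^{\vee}) = kc$, and combined with the EHW parametrization $\lambda-\rho = \lambda_0 + z\xi$ under the normalization $(\lambda_0+\rho,\beta) = 0$, this pins down the coordinate of $\lambda$ along $\xi$ precisely as a linear function of $k$.

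First I would handle the classical cases one by one. For $G = SU(p,n-p)$, Theorem \ref{AV-a} identifies the orbit with the partition $[2^{q_2},1^{n-2q_2}]$, and by Lemma \ref{antichain} the length $q_2$ equals the largest integer for which the last $q_2$ entries of the left block of $\lambda$ lie entrywise below the first $q_2$ entries of the right block. A direct substitution using $z=z_k$ together with the $(p,n-p)$-dominance of $\lambda_0$ shows $q_2 = \min(k,r)$. The arguments for $G = Sp(n,\mathbb{R})$ and $G = SO^*(2n)$ proceed analogously by applying Lemma \ref{antichain} to the duplicated sequence $\lambda^-$, with a parity check to convert $q_2$ into $q_2^{\ev}$ or $q_2^{\odd}$ as dictated by Theorems \ref{C-AV} and \ref{D-AV}. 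The low-rank cases $SO(2,2n-1)$ and $SO(2,2n-2)$, where $r=2$, follow by directly matching the three cases of Theorems \ref{Bpartition} and \ref{so(2,2n-2)} against $z_0$, $z_1$, and $z_2$, using the explicit inequality on $t_1$ in those theorems.

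For the exceptional groups $E_{6(-14)}$ and $E_{7(-25)}$, only $r+1$ values of $k$ arise. The strategy is to check that the inner-product conditions $(\lambda,\alpha^{\vee}) > 0$ appearing in Theorems \ref{e6-GK} and \ref{e7-GK} reduce, under the EHW normalization $(\lambda_0+\rho,\beta)=0$ and the substitution $z = z_k$, to the desired $k(\lambda) = \min(k,r)$. Since the unitary $\lambda_0$ lies on a short finite list of reduction points, this is a small verification in each case.

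The main obstacle I expect is the combinatorics for types $C$ and $D$, where the partition description involves the parities of $q_2$ through $q_2^{\ev}$ or $q_2^{\odd}$: one must show that as $z$ decreases past each successive $z_k$, the length $q_2$ increments by exactly one unit with the correct parity, so that the special-type $B$/$C$/$D$ collapse produces $[2^{\min(k,r)},1^{\bullet}]$ uniformly. The cleanest route is to exploit the fact that $\xi$ acts as a uniform shift on the relevant coordinates of $\lambda$ in each classical type, so varying $z$ changes the Young tableau $P(\lambda^-)$ by a tightly controlled insertion step; combined with the boundary values $q_2 = 0$ at $z = z_0$ and $q_2 = r$ once $z \leq z_{r-1}$, the monotonicity of this step function forces the desired equality $k(\lambda) = \min(k,r)$ at every reduction point.
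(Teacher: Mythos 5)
Your plan follows essentially the same route as the paper's proof: a case-by-case analysis in which $SU(p,q)$, $Sp(n,\mathbb{R})$ and $SO^*(2n)$ are handled by applying Lemma \ref{antichain} to the explicit EHW coordinates of $\lambda$ (or $\lambda^-$) to pin down $q_2$ and then converting via the parity rules of Proposition \ref{BCD}, while the rank-two and exceptional cases are settled by matching the values $z_k$ against the inequalities appearing in Theorems \ref{Bpartition}, \ref{so(2,2n-2)}, \ref{e6-GK} and \ref{e7-GK}. The only notable difference is that where you propose monotonicity of $q_2$ in $z$ together with boundary values to obtain the upper bound on $q_2$, the paper argues by contradiction, choosing an explicit index $i_0$ at which the antichain condition of Lemma \ref{antichain} would force an impossible inequality such as $1\leq 0$; both routes, combined with the parity bookkeeping for types $C$ and $D$, yield $k(\lambda)=\min(k,r)$.
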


	When $L(\lambda)$ is a unitary highest weight module, we will have $V(L(\lambda))=\overline{\caO}_{k(\lambda)}$ with $k(\lambda)$  given in Proposition \ref{BCD}. From \cite{EHW}, we can write $\lambda-\rho=\lambda_0+z\xi$, where $\xi$ is the fundamental weight perpendicular to the compact roots such that $(\xi,\beta^{\vee})=1$ and $\lambda_0=(\lambda_{1},\lambda_2,\cdots,\lambda_{n})$ with $(\lambda_0+\rho,\beta^{\vee})=0$.
	
Now   $z=z(\lambda)=(\lambda,\beta^{\vee})=z_k=(\rho,\beta^{\vee})-kc$, we want to prove that $k(\lambda)=r$ if $k\geq r$ and $k(\lambda)=k$ if $0\leq k\leq r-1$.

In the following, we will give the proof of our Theorem \ref{kconst} in a case-by-case way. The idea of our proof is uniform. For $G=SU(p,q),Sp(n,\mathbb{R})$ and $SO^*(2n)$, we use Lemma \ref{antichain} to prove our result. For other cases, we use our characterizations of annihilator varieties of $L(\lambda)$ since they are given in a distribution of the value of $z=z(\lambda)=(\lambda,\beta^{\vee})$.

\subsection{Proof for $G=SU(p,q)$} 
For $SU(p,q)$ with $p+q=n$,  from \cite{EHW} the root system $Q(\lambda_0)=R(\lambda_0)$ is of type $\mathfrak{su}(p',q')$ with $p'\leq p$ and $q'\leq q$. Then we have $\lambda_1=\lambda_2=\cdots=\lambda_{p'}>\lambda_{p'+1}$
and $\lambda_n=\lambda_{n-1}=\cdots=\lambda_{n-q'+1}<\lambda_{n-q'}$. 

From \cite{EHW} we know that $L(\lambda)$ is unitarizable if and only if $z\leq \max\{p', q'\}$ or $z\in \mathbb{Z}$  with $z\leq p'+q'-1$. In this case, we have $c=1$, $2\rho=(n-1, n-3, ...,-n+3, -n+1)$, $(\rho, \beta^{\vee})=n-1$, $\lambda_1-\lambda_n+n-1=0$ and  $n\xi=(q,...,q,-p,...,-p)$ with $p$ copies of $q$ and $q$ copies of $-p$.

We define an equivalent relation, such that $\lambda \sim \lambda'$ if and only if  $q_2(\lambda) =q_2( \lambda')$ where $q_2(\lambda)$ denotes the number of boxes in the second column of the Young tableau $P(\lambda)$. We denote $e=(1,1,...,1)$ with $n$ copies of $1$ and $e_0=(1,...,1,0,...,0)$ with $p$ copies of $1$ and $q$ copies of $0$.
By using R-S algorithm, we have $$\lambda=\lambda_0+z\xi+\rho \sim \lambda_0+z\xi+\rho+\frac{zp}{n}e= \lambda_0+\rho+ze_0\sim \lambda_0+\rho+ze_0 +\frac{n-1}{2}e.$$
Thus  \begin{align*}
&\lambda\sim \lambda_0+\rho+ze_0 +\frac{n-1}{2}e\\
&=(\lambda_n+z,\lambda_n+z-1,...,\lambda_n+z-p'+1, \lambda_{p'+1}+z+n-p'-1,...,\lambda_p+z+n-p,\\
&\quad\quad \lambda_{p+1}+n-p-1,...,\lambda_{n-q'}+q',\lambda_n+q'-1,...,\lambda_n+1, \lambda_n)\\
&:=(s_1,...,s_p,s_{p+1},...,s_{n}).
\end{align*}

We may assume $p\leq q$. Then $(s_1,...,s_p)$ is a strictly decreasing sequence smaller that  $\lambda_n+z$ and $(s_{p+1},...,s_n)$ is a strictly decreasing sequence larger than $\lambda_n$.

Thus we have 
\[	
\left\{
\begin{array}{ll}     	  	
	s_{p+m}\geq s_n+q-m, &{\rm ~if~}\:1\leq m\leq q\\	
	s_i\leq s_n+z-i+1, &{\rm ~if~}\:1\leq i\leq p.
\end{array}	
\right.
\]	

Note that $z_k=(\rho,\beta^{\vee})-kc=n-1-k$, so we have 
$$\{z_k\mid 0\le k\le r=p\}=\{n-1,n-2,\dots, q,q-1\}.$$

Now suppose that $L(\lambda)$ is a unitary highest weight module, then we have the follows.
	\begin{enumerate}[leftmargin=18pt]
\item When $p'+q'-1\leq q-1$,   we will have $z=z_k\leq p'+q'-1\leq q-1=z_r$. So we can get
\[	
\left\{
\begin{array}{ll}     	  	
	 s_1=s_n+z\leq s_n+q-1\leq  s_{p+1},\\	
	s_2\leq s_{p+2},\\
\quad\quad \vdots\\
s_{p'} \leq s_n+z+1-p'\leq  s_n+q-p' \leq s_{p+p'},\\
 s_{p'+1}\leq s_n+z-(p'+1)+1=s_n+z-p'\leq s_n+q-1-p'\leq s_{p+p'+1},\\
\quad\quad \vdots\\
 s_p\leq s_{2p}.
\end{array}	
\right.
\]	
  Thus we have $k(\lambda)=q_2(\lambda)\geq p$ by Lemma \ref{antichain} and Proposition \ref{BCD}. Since $k(\lambda)\leq r=\min\{p,q\}=p$, we must have $k(\lambda)=p=k$.
  \item 
When  $p'+q'-1\geq q$ and $q\leq z=z_k=n-1-k\leq  p'+q'-1$ for some $1\leq k\leq r-1=p-1 $, similarly we can get
\[	
\left\{
\begin{array}{ll}     	  	
	s_{p+m}\geq s_n+q-m,&1\leq m\leq q\\	
	s_i\leq s_n+z-i+1,&1\leq i\leq p.
\end{array}	
\right.
\]


When $z-q+2=p-k+1\leq i\leq p$, we have $q-z+i-1=k+i-p\geq 1$ and
$$s_i\leq s_n+z-i+1=s_n+q-(q-z+i-1)\leq s_{p+q-z+i-1}=s_{k+i}.$$
So 
\[	
\left\{
\begin{array}{ll}     	  	
	s_{p-k+1}\leq s_{p+1},\\	
	\quad \quad \vdots\\
 s_{p}\leq s_{p+q-z+p-1}=s_{p+n-z-1}=s_{p+k}.
\end{array}	
\right.
\]	
Thus we have $k(\lambda)=q_2(\lambda)\geq k$ by Lemma \ref{antichain}. 

If $q_2(\lambda)>k$, we will have $s_{p-k}\leq s_{p+1}, \cdots, s_{p}\leq s_{p+k+1}$. From $ z=z_k=n-1-k\leq  p'+q'-1$, we can get $$n-k\leq  p'+q'\Rightarrow q-q'+p-k\leq p'.$$
Now we choose a  positive integer $1\leq i_0\leq k$ such that $$i_0\geq q-q' ~(\Leftrightarrow p+1+i_0> n-q')\text{~and~} p-k+i_0\leq p'. $$
Then we have $$s_{p-k+i_0}\leq s_{p+1+i_0}=\lambda_n+n-(p+1+i_0)=\lambda_n+n-p-1-i_0.$$
On the other hand, we have $$s_{p-k+i_0}=\lambda_n+z-(p-k+i_0)+1=\lambda_n+n-p-i_0.
$$
Therefore we get contradiction! So we must have $k(\lambda)= q_2(\lambda)=k$.

\item When  $p'+q'-1\geq q$ and $z=z_k=n-1-k\leq  q-1$, 
by similar arguments as in (1) we can show that  $k(\lambda)=p=k$.

\end{enumerate}

\subsection{Proof for $G=Sp(n,\mathbb{R})$} 
For $Sp(n,\mathbb{R})$, from \cite{EHW} the root system $Q(\lambda_0)$ is of type $\mathfrak{sp}(q,\mathbb{R})$ and $R(\lambda_0)$ is of type $\mathfrak{sp}(r,\mathbb{R})$ with $r\geq q$. Then from \cite{EHW} we have $\lambda_1=\cdots=\lambda_q=-n> \lambda_{q+1}\geq \cdots \geq \lambda_n$  and $\lambda_{q+1}=\cdots=\lambda_{r}=\lambda_1-1=-n-1>\lambda_{r+1}$. From Table \ref{constants}, we have $c=\frac{1}{2}$ and $(\rho,\beta^{\vee})=n$.
	
From \cite{EHW} we know that $L(\lambda)$ is unitarizable if and only if $z\le \frac{1}{2}(r+1)$	or $2z\in\mathbb{Z}$ with $z\le \frac{1}{2}(q+r)$.  Thus
\begin{align*}
\lambda=\lambda_0+z\xi+\rho=&(z,z-1,\cdots,z-q+1,z-q-1,\cdots,z-r,\\
&z+\lambda_{r+1}+n-r,\cdots,z+\lambda_n+1)\\
:=&(s_1,s_2,\cdots,s_q,s_{q+1},\cdots,s_r,s_{r+1},\cdots,s_n),
\end{align*}
then $(s_1,s_2,\cdots,s_n)$ and $(-s_n,-s_{n-1},\cdots,-s_1)$ are strictly decreasing sequences.

Note that $z_k=(\rho,\beta^{\vee})-kc=n-\frac{k}{2}$, so we have 
$$\{z_k\mid 0\le k\le r=n\}=\left\{n,n-\frac{1}{2},\cdots,\frac{n}{2}+\frac{1}{2},\frac{n}{2}\right\}.$$

Now suppose that $L(\lambda)$ is a unitary highest weight module, then we have the follows.
	\begin{enumerate}[leftmargin=18pt]
	\renewcommand{\labelenumi}{(\theenumi)}	
\item When $n$ is even and $\frac{1}{2}(q+r)\le\frac{1}{2}(n-1), \text{~in ~other ~words}, z\le\frac{1}{2}(n-1)=z_{n+1}$, we will have
\[	
	\left\{
\begin{array}{ll}
	s_m\le -s_1+n-m, &{\rm ~if~}\:1\le m\le q,\\	     	  	
s_l\le -s_1+2z-l+1, &{\rm ~if~}\:q+1\le l\le n,\\
-s_{n-i}\geq -s_1+n-i-1, &{\rm ~if~}\:0\le i\le n-q-1,\\
-s_j=-s_1+j-1, &{\rm ~if~}\:1\le j\le q.
\end{array}	
\right.
\]
Then we have the follows.
\[	
\left\{
\begin{array}{ll}
	&-s_n\geq -s_1+n-1\geq s_1,\\
	&-s_{n-1}\geq-s_1+n-2\geq s_2,\\
	&\quad\quad\vdots\\
 &-s_{q+1}\geq-s_1+q-1\geq s_{n-q},\\
  &-s_{q}=-s_1+q-1=-s_1+n-1-(n-q+1)+1\\
  &\quad \quad \geq -s_1+2z-(n-q+1)+1\geq s_{n-q+1},\\
  &\quad\quad\vdots\\
	&-s_2=-s_1+1=-s_1+n-1-n+2\geq -s_1+2z-(n-1)+1\geq s_{n-1},\\
	&-s_1=-s_1+n-1-n+1\geq -s_1+2z-n+1\geq s_n.	
\end{array}	
\right.
\]	
Recall that
\begin{equation*}
	2q_2^{\odd}=\begin{cases}
		q_2+1,&\text{ if } q_2 \text{ is odd},\\
		q_2,&\text{ if } q_2 \text{ is even},
	\end{cases}
	\quad 2q_2^{\ev}+1=\begin{cases}
		q_2,&\text{ if } q_2 \text{ is odd},\\
		q_2+1,&\text{ if } q_2 \text{ is even}.
	\end{cases}
\end{equation*}
Thus $k(\lambda)\geq q_2(\lambda)\geq  n$ by Lemma \ref{antichain} and Proposition \ref{BCD}. We must have  $k(\lambda)=n$ since $k(\lambda)\leq r=n$.

\item When $z\in\mathbb{Z}$ and $\frac{n}{2}\le z=z_k=n-\frac{k}{2}\le \frac{1}{2}(q+r)$ for $1\le k\le n$, we will have
	\[	
\left\{
\begin{array}{ll}
	s_m\geq -s_1+n-m+1, &{\rm ~if~}\:1\le m\le q,\\	     	  	
	s_l\le -s_1+2z-l+1, &{\rm ~if~}\:q+1\le l\le n,\\
	-s_{n-i}\geq -s_1+n-i-1, &{\rm ~if~}\:0\le i\le n-q-1,\\
	-s_j=-s_1+j-1, &{\rm ~if~}\:1\le j\le q.
\end{array}	
\right.
\]	
\begin{enumerate}[leftmargin=18pt]
\item  When  $q+1\le n-k+2\le n$ (equivalently $2\leq k\leq n-q+1$) and  $n$ is even, we have the follows.
\begin{align}\label{1}
\begin{cases}
&s_{n-(k-1)+1}\le -s_1+2z-(n-k+2)+1=-s_1+n-1\le -s_n,\\
&s_{n-(k-1)+2}\le -s_1+2z-(n-k+3)+1=-s_1+n-2\le -s_{n-1},\\
	&\quad\quad \vdots\\
	&s_{n-1}\le -s_1+2z-(n-1)+1=-s_1+n-(k-3)-1\le -s_{n-k+3}, \\
	&s_n\le -s_1+2z-n+1\le -s_{n-(k-2)}=-s_{n-k+2}.
 \end{cases}	
\end{align}

Thus we have $q_2(\lambda)\geq k-1$ by Lemma \ref{antichain}.

If $q_2(\lambda)\geq k+1$, by Lemma \ref{antichain} we will have $s_{n-k+i}\leq -s_{n-i}$ for $0\leq i\leq k$. From $z=z_k=n-\frac{k}{2}\leq\frac{q+r}{2}$, we can get $2n-k\leq q+r$. Now we choose a positive integer $0\leq i_0\leq k$ such that
$$n-k+i_0\leq q\text{~and~}n-i_0\leq r.$$
\begin{enumerate}[leftmargin=18pt]
    \item When $n-k+i_0\leq q \text{~and~} n-i_0\leq q$,  we have $$s_{n-k+i_0}=z-(n-k+i_0)+1\leq -s_{n-i_0}=-(z-(n-i_0)+1),$$
  which implies that $1\leq -1$.  Therefore we get a contradiction!

\item When $n-k+i_0\leq q \text{~and~} q<n-i_0\leq r$,  we have $$s_{n-k+i_0}=z-(n-k+i_0)+1\leq -s_{n-i_0}=-(z-(n-i_0)),$$
  which implies that $1\leq 0$.  Therefore we get a contradiction!

\end{enumerate}
To sum up,  $k+1$ does not satisfy Lemma \ref{antichain} and    we must have $k-1\leq q_2(\lambda)\leq k$.

If  $q_2(\lambda)= k-1$,  $z=z_k=n-\frac{k}{2}\in \mathbb{Z}$ implies that $k$ is even. Thus $q_2(\lambda)=k-1$ is odd and $k(\lambda)=2q_2^{\odd}=q_2+1=k$ by Proposition \ref{BCD}.

If $q_2(\lambda)=k$, by Proposition \ref{BCD} we have $k(\lambda)=2q_2^{\odd}=q_2=k$ since $q_2(\lambda)=k$ is even. 

 Similarly the result is the same when $n$ is odd, and we omit the process here.








\item When $2\leq n-k+2\leq q$ (equivalently $n-q+2\leq k\leq n$) and $n$ is even, we have the follows.
\begin{align}\label{12}
\begin{cases}
&-s_{n-(k-1)+1}= -s_1+n-k+1=-s_1+2z-n+1\geq s_n,\\
&-s_{n-(k-1)+2}= -s_1+n-k+2=-s_1+2z-(n-1)+1\geq s_{n-1},\\
	&\quad\quad \vdots\\
	&-s_{n-1}\geq -s_1+n-2=-s_1+2z-(n-k+3)+1\geq s_{n-k+3}, \\
	&-s_n\geq -s_1+n-1=-s_1+2z-(n-k+2)+1\geq s_{n-k+2}.
 \end{cases}	
\end{align}
Then we have  $q_2(\lambda)\geq k-1$ by Lemma \ref{antichain}.   Similarly in this case, we can obtain $k(\lambda)=k$ by Proposition \ref{BCD}. The result is the same when $n$ is odd, and we omit the process here.

\item When $z=z_1=n-\frac{1}{2}\leq \frac{1}{2}(q+r)\leq n$, we will have $q=n-1<r=n$ or $q=r=n$. Thus
$\lambda$ is half-integral and $\lambda_1=\cdots=\lambda_{n-1}=-n>\lambda_n=-n-1$ or $\lambda_1=\cdots=\lambda_{n}=-n$.  So  $s_{n-1}=\frac{3}{2}>s_n=n-\frac{1}{2}+\lambda_n+1=-\frac{1}{2}$ or $\frac{1}{2}$. By using R-S algorithm, we have $q_2(\lambda)=1$ if $s_n=-\frac{1}{2}$ and  $q_2(\lambda)=0$ if $s_n=\frac{1}{2}$.  
Therefore $k(\lambda)=1=k$ by Proposition \ref{BCD}.

\item When $z=z_0=n\leq \frac{1}{2}(q+r)\leq n$, we will have $q=r=n$. Thus
$\lambda$ will be dominant integral since $\lambda_1=\cdots=\lambda_n=-n$ and  $s_n=n+\lambda_n+1=1$. Therefore $k(\lambda)=0=k$ by Proposition \ref{BCD}.

\end{enumerate}

\item  When  $z\in\frac{1}{2}+\mathbb{Z}$ and $\frac{n}{2}\le z=z_k=n-\frac{k}{2}\le \frac{1}{2}(q+r)$ for $1\le k\le n$, we still have
$q_2(\lambda)=k-1$ by (\ref{1}) and (\ref{12}). In this case $q_2(\lambda)=k-1$ and $k(\lambda)=2q_2^{\ev}+1=k$ by Proposition \ref{BCD}.
\end{enumerate}


To sum up, we have $k(\lambda)=k$, where $k(\lambda)$ is given in Proposition \ref{BCD}. Therefore we have completed the proof of the case when $G=Sp(n,\mathbb{R})$. 

\subsection{Proof for $G=SO^{\ast}(2n)$} 
 For $SO^{\ast}(2n)$, from \cite{EHW} the root system $Q(\lambda_0)=R(\lambda_0)$ is of type $\mathfrak{su}(1,q)$ with $1\le q \le n-1$ or  $\mathfrak{so}^{\ast}(2p)$ with $3\le p\le n$. From Table \ref{constants}, we have $c=2$ and $(\rho,\beta^{\vee})=2n-3$.
 $(\lambda_0+\rho,\beta^{\vee})=0$ implies that $\lambda_1+\lambda_2=-2n+3$.

\begin{enumerate}[leftmargin=18pt]
	\item When $Q(\lambda_0)=R(\lambda_0)$ is of type $\mathfrak{su}(1,q)$ with $1\le q \le n-1$, we have 
	$$\lambda_1>\lambda_2=\lambda_3=\cdots=\lambda_{q+1}>\lambda_{q+2}\geq \cdots \geq \lambda_n.$$
From \cite{EHW} we know that $L(\lambda)$ is unitarizable if and only if $z\le q$. Thus we can write
\begin{align*}
	\lambda&=\lambda_0+z\xi+\rho\\
 &=(\frac{1}{2}z+\lambda_1+n-1,\frac{1}{2}z+\lambda_2+n-2,\cdots,\frac{1}{2}z+\lambda_{q+1}+n-q-1,\\
 &\quad \: \frac{1}{2}z+\lambda_{q+2}+n-q-2,\cdots,\frac{1}{2}z+\lambda_n)\\
 &=(\frac{1}{2}z-\lambda_2-n+2,\frac{1}{2}z+\lambda_2+n-2,\cdots,\frac{1}{2}z+\lambda_{2}+n-q-1,\\
  &\quad \: \frac{1}{2}z+\lambda_{q+2}+n-q-2,\cdots,\frac{1}{2}z+\lambda_n)\\
  &:=(t_1,\cdots,t_{q+1},t_{q+2},\cdots,t_n).
\end{align*}	
So $(t_1,t_2,\cdots,t_n)$ and $(-t_n,-t_{n-1},\cdots,-t_1)$ are strictly decreasing sequences, and it is not difficult to get
\[	
\left\{
\begin{array}{ll}
	t_1> -t_1+z+1, \\	     	  	
	t_i\le -t_1+z+2-i, &{\rm ~if~}\:2\le i\le n,\\	
	-t_j\geq -t_1+j-1, &{\rm ~if~}\:1\le j\le n.
\end{array}	
\right.
\]		
	
When $L(\lambda)$ is a unitary highest weight module and  $n$ is even, we have the follows.
\begin{enumerate}[leftmargin=18pt]
	\item When $z< q\leq n-1=z_{r-1}$, we have $L(\lambda)=N(\lambda)$ by \cite{EHW}. Thus $k(\lambda)=\frac{n}{2}=k$. 
 
 \item When $z= q< n-1=z_{r-1}$, we have 
 \begin{align*}
\begin{cases}
	& t_{1}=\frac{1}{2}z-\lambda_2-n+2\le \frac{1}{2}z-\lambda_n-n+2\leq -\frac{1}{2}z-\lambda_n=-t_n,\\
& t_2=\frac{1}{2}z+\lambda_2+n-2=\frac{1}{2}z-\lambda_1-2n+3+n-2\leq -\frac{1}{2}z-\lambda_{n-1}-1=-t_{n-1},\\
	 &\quad\quad\vdots\\
	& t_n\leq -t_1.
 \end{cases}
\end{align*}	
 Thus $k(\lambda)= q^{\ev}_2=[\frac{q_2}{2}]\geq \frac{n}{2}$ by Lemma \ref{antichain} and Proposition \ref{BCD}. We must have  $k(\lambda)=\frac{n}{2}=r$ since $k(\lambda)\leq r=[\frac{n}{2}]=\frac{n}{2}$ and $n$ is even.


\item When $z=z_k=2n-3-2k=q\leq n-1$ for $1\le k \le r-1=\frac{n}{2} -1$, we will have  $z=z_{r-1}=n-1=q$ and $\lambda_1>\lambda_2=\lambda_3=\cdots= \lambda_n$.  Thus we can get 
 \begin{align*}
\begin{cases}
& t_2=\frac{1}{2}z+\lambda_2+n-2=\frac{1}{2}z-\lambda_1-n+1\leq -\frac{1}{2}z-\lambda_{n}=-t_{n},\\
	 &\quad\quad\vdots\\
	& t_n\leq -t_2.
 \end{cases}
\end{align*}	
Thus $q_2(\lambda)\geq n-1$ by Lemma \ref{antichain}.
Note that $$t_1=\frac{1}{2}z-\lambda_2-n+2=-\frac{1}{2}z-\lambda_{n}+1=-t_n+1>-t_n.$$
So we can not have $q_2(\lambda)=n$. Thus $q_2(\lambda)=n-1$ and  $k(\lambda)= q^{\ev}_2=[\frac{q_2}{2}]\geq \frac{n}{2}$ by Lemma \ref{antichain} and Proposition \ref{BCD}.


Thus $k(\lambda)= q^{\ev}_2=[\frac{q_2}{2}]= k$ by Lemma \ref{antichain} and Proposition \ref{BCD}.

\end{enumerate}	
 The argument is similar when $n$ is odd, so we omit the details here.  Therefore, when $Q(\lambda_0)=R(\lambda_0)$ is of type $\mathfrak{su}(1,q)$ with $1\le q \le n-1$, we have $k(\lambda)=k$.

	\item When $Q(\lambda_0)=R(\lambda_0)$ is of type $\mathfrak{so}^{\ast}(2p)$ with $3\le p\le n$, we have
$$\lambda_1=\lambda_2=\cdots=\lambda_p=-n+\frac{3}{2}>\lambda_{p+1}\geq \cdots \geq \lambda_n.$$	
From \cite{EHW} we know that $L(\lambda)$ is unitarizable if and only if \begin{equation*}
	z\le\begin{cases}
		p,&\text{ if } p \text{ is odd},\\
		p-1,&\text{ if } p \text{ is even},
	\end{cases}
\end{equation*} or $z=2p-3-2j\leq 2p-3$ for some integer $0\le j \le [\frac{p}{2}]-2$. 

Thus we can write
\begin{align*}
	\lambda&=\lambda_0+z\xi+\rho\\
 &=(\frac{1}{2}z+\lambda_1+n-1,\cdots,\frac{1}{2}z+\lambda_p+n-p,\frac{1}{2}z+\lambda_{p+1}+n-p-1,\cdots,\frac{1}{2}z+\lambda_n)\\
 &=(\frac{1}{2}z+\frac{1}{2},\cdots,\frac{1}{2}z+\frac{3}{2}-p,\frac{1}{2}z+\lambda_{p+1}+n-p-1,\cdots,\frac{1}{2}z+\lambda_n)\\
 &:=(t_1,\cdots,t_p,t_{p+1},\cdots,t_n).
\end{align*}
So $(t_1,t_2,\cdots,t_n)$ and $(-t_n,-t_{n-1},\cdots,-t_1)$ are strictly decreasing sequences, and it is not difficult to get
\[	
\left\{
\begin{array}{ll}     	  	
	t_i\le -t_1+z+2-i, &{\rm ~if~}\:1\le i\le n,\\	
	-t_j\geq -t_1+j-1, &{\rm ~if~}\:1\le j\le n.
\end{array}	
\right.
\]	
Suppose $p$ and $n$ are even. Then we have the follows.	
\begin{enumerate}[leftmargin=18pt]
	\item When $z< p-1\leq n-1=z_{r-1}$, by similar arguments as in the case of $\mathfrak{su}(1,q)$ we have $q_2(\lambda)= n$. Thus $k(\lambda)= q^{\ev}_2=[\frac{q_2}{2}]=[\frac{n}{2}]=r$ by Lemma \ref{antichain} and Proposition \ref{BCD}. 	

	\item When $z\geq p-1$ and $z=z_k=2n-3-2k\leq 2p-3$ for some $1\le k\le r-1=\left\lfloor \frac{n}{2} \right\rfloor-1$, we have 
 	
\begin{align*}
\begin{cases}
	& t_{n-(2k+1)+1}\le -t_1+z+2-(n-2k)=-t_1+z-n+2k+2\le -t_n,\\
		& t_{n-(2k+1)+2}\le -t_1+z+2-(n-2k+1)=-t_1+n-2\le -t_{n-1},\\
	 &\quad\quad\vdots\\
	& t_{n-1}\le -t_1+z+2-(n-1)=-t_1+n-2k\le -t_{n-(2k+1)+2},\\
	&t_{n}\le -t_1+z+2-n=-t_1+n-2k-1\le -t_{n-(2k+1)+1}.
 \end{cases}
\end{align*}	
 Thus we have $q_2(\lambda)\geq 2k+1$ by Lemma \ref{antichain}. 
 
 If $q_2(\lambda)\geq 2k+2$, by Lemma \ref{antichain} we will have $t_{n-2k-2+i}\leq -t_{n-i+1}$ for $1\leq i\leq 2k+2$.
  From $ z=z_k=2n-3-2k\leq  2p-3$, we can get $$ 2n-2k-1< 2p\Rightarrow n-2k-2+n+1<2p.$$
Now we choose a  positive integer $1\leq i_0\leq 2k+2$ such that $$n+1-i_0\leq p ~\text{~and~} n-2k-2+i_0\leq p. $$
Then we have \begin{align*}
    &t_{n-2k-2+i_0}=\frac{1}{2}z+\frac{1}{2}-(n-2k-2+i_0-1)\\
    &\leq -t_{n+1-i_0}
    =-(\frac{1}{2}z+\frac{1}{2}-(n+1-i_0-1)),
\end{align*}
which implies that
$$z-2n+2k+4\leq 0\Rightarrow 1\leq 0.$$
Therefore we get contradiction! So we must have $q_2(\lambda)= 2k+1$.

 Thus $k(\lambda)= q^{\ev}_2=[\frac{q_2}{2}]= k$ by Lemma \ref{antichain} and Proposition \ref{BCD}. 	
\end{enumerate}
 The argument is similar when $n$ is odd, so we omit the details here. 
\end{enumerate}


\subsection{Proof for $G=SO(2,2n-2)$} For $SO(2,2n-2)$, from \cite{EHW} the root system $Q(\lambda_0)=R(\lambda_0)$ is of type $\mathfrak{su}(1,p)$ with $1\le p\le n-1$ or $\mathfrak{so}(2,2n-2)$. From Table \ref{constants}, we have $c=n-2$ and $(\rho,\beta^{\vee})=2n-3$. $(\lambda_0+\rho,\beta^{\vee})=0$ implies that $\lambda_1+\lambda_2=-2n+3$.

\begin{enumerate}[leftmargin=18pt]
    \item When $Q(\lambda_0)=R(\lambda_0)$ is of type $\mathfrak{su}(1,p)$ with $1\le p\le n-1$, we have
    $$|\lambda_n|\leq \lambda_{n-1}\leq \cdots \leq \lambda_{p+2}<\lambda_{p+1}=\lambda_p=\cdots=\lambda_{2}\in \frac{1}{2}\mathbb{Z}_{>0}.$$

From \cite{EHW} we know that $L(\lambda)$ is unitarizable if and only if $z\le p$. From Theorem \ref{so(2,2n-2)}, we have
$k(\lambda)=2$ if and only if $\lambda$ is not integral or $$z\leq -|\lambda_n|-\lambda_1-n+1=-|\lambda_n|+\lambda_2+2n-3-n+1=-|\lambda_n|+\lambda_2+n-2.$$
Now  when $z< n-1=z_1$,  $z$ will be not integral,  or $z\in \mathbb{Z}$  and $z\leq n-2\leq -|\lambda_n|+\lambda_2+n-2$.
Thus we have $k(\lambda)=2$. Note that when $p\leq n-2$, we always have $z\leq p\leq n-1$.

Now when $z= n-1=z_1$, we must have $p=n-1$ since $z\leq p\leq n-1$. Then we have $\lambda_2=\cdots=\lambda_{n-1}=|\lambda_n|\in \frac{1}{2}\mathbb{Z}_{>0}$. From Theorem \ref{so(2,2n-2)}, we have
$k(\lambda)=1$ if and only if $\lambda$ is  integral and  $$ -|\lambda_n|-\lambda_1-n+1=-|\lambda_n|+\lambda_2+n-2<z\leq \lambda_2 -\lambda_1-1,$$
equivalently we have 
$$n-2<z\leq 2\lambda_2+2n-4.$$
Thus  when $z= n-1=z_1$,  
 we will  have $k(\lambda)=1$.

\item When $Q(\lambda_0)=R(\lambda_0)$ is of type $\mathfrak{so}(2,2n-2)$, we have
	$$\lambda_2=\lambda_3=\cdots=\lambda_n=0.$$
From \cite{EHW} we know that $L(\lambda)$ is unitarizable if and only if $z\le n-1=z_1$ or $z=2n-3=z_0$. In this case, we have  $\lambda_1=-2n+3$.

From Theorem \ref{so(2,2n-2)}, we have
$k(\lambda)=2$ if and only if $\lambda$ is not integral or $$z\leq -|\lambda_n|-\lambda_1-n+1=n-2.$$
Now  when $z< n-1=z_1$,  $z$ will be not integral,  or $z\in \mathbb{Z}$  and $z\leq n-2$.
Thus we have $k(\lambda)=2$.

From Theorem \ref{so(2,2n-2)}, we have
$k(\lambda)=1$ if and only if $\lambda$ is  integral and  $$ -|\lambda_n|-\lambda_1-n+1<z\leq \lambda_2 -\lambda_1-1,$$
equivalently we have 
$$n-2<z\leq 2n-4.$$
Thus  when $z= n-1=z_1$,  
 we will  have $k(\lambda)=1$.

From Theorem \ref{so(2,2n-2)}, we have
$k(\lambda)=0$ if and only if $\lambda$ is  integral and  $$z\geq \lambda_2 -\lambda_1=2n-3.$$
Thus  when $z= 2n-3=z_0$,  
 we will  have $k(\lambda)=0$.

\end{enumerate}
Now we complete the proof for all cases.

\subsection{Proof for $G=SO(2,2n-1)$}  For $SO(2,2n-1)$, from \cite{EHW} the root system $Q(\lambda_0)=R(\lambda_0)$ is of type $\mathfrak{su}(1,p)$ with $1\le p\le n-1$ or $\mathfrak{so}(2,2n-1)$, or $Q(\lambda_0)=\mathfrak{su}(1,n-1)$ and $R(\lambda_0)=\mathfrak{so}(2,2n-1)$. From Table \ref{constants}, we have $c=n-\frac{3}{2}$ and $(\rho,\beta^{\vee})=2n-2$. $(\lambda_0+\rho,\beta^{\vee})=0$ implies that $\lambda_1+\lambda_2=-2n+2$.
\begin{enumerate}[leftmargin=18pt]
  \item When $Q(\lambda_0)=R(\lambda_0)$ is of type $\mathfrak{su}(1,p)$ with $1\le p\le n-1$, we have
$$0\leq \lambda_n\leq \lambda_{n-1}\leq\cdots\leq \lambda_{p+2}<\lambda_{p+1}=\lambda_p=\cdots=\lambda_{2}.$$
From \cite{EHW} we know that $L(\lambda)$ is unitarizable if and only if $z\le p$. 

From Theorem \ref{Bpartition}, we have $k(\lambda)=2$ if and only if $z\notin \frac{1}{2}\mathbb{Z}$, or $z\in \mathbb{Z}$ and $z< \lambda_2-\lambda_1$, or $z\in\frac{1}{2}+\mathbb{Z}$ and $z\leq -\lambda_1-n+\frac{1}{2}$.

Now $z\leq p< n-\frac{1}{2}=z_1$, we will have  $z\notin \frac{1}{2}\mathbb{Z}$, or $z\in\mathbb{Z}$ and $z<n-\frac{1}{2}<2n-2\leq\lambda_2-\lambda_1$, or $z\in\frac{1}{2}+\mathbb{Z}$ and $z\leq n-\frac{3}{2}<\lambda_2+n-\frac{3}{2}=-\lambda_1-n+\frac{1}{2}$. Thus we have $k(\lambda)=2$ by Theorem \ref{Bpartition}.


\item  When $Q(\lambda_0)=R(\lambda_0)$ is of type $\mathfrak{so}(2,2n-1)$, we have
$$\lambda_2=\lambda_3=\cdots=\lambda_n=0.$$
From \cite{EHW} we know that $L(\lambda)$ is unitarizable if and only if $z\le n-\frac{1}{2}=z_1$ or $z=2n-2=z_0$. In this case, we have  $\lambda_1=-2n+2$.

From Theorem \ref{Bpartition}, we have $k(\lambda)=2$ if and only if $z\notin \frac{1}{2}\mathbb{Z}$, or $z\in \mathbb{Z}$ and $z< \lambda_2-\lambda_1=2n-2$, or $z\in\frac{1}{2}+\mathbb{Z}$ and $z\leq -\lambda_1-n+\frac{1}{2}=n-\frac{3}{2}$.

Now when $z< n-\frac{1}{2}=z_1$, we will have 
$z\notin \frac{1}{2}\mathbb{Z}$,
 or $z\in\mathbb{Z}$ and $z<n-\frac{1}{2}<2n-2$, or $z\in\frac{1}{2}+\mathbb{Z}$ and $z\leq n-\frac{3}{2}$. Thus we have $k(\lambda)=2$.

From Theorem \ref{Bpartition}, we have $k(\lambda)=1$ if and only if $z\in \frac{1}{2}+\mathbb{Z}$ and  $z>-\lambda_1-n+\frac{1}{2}=n-\frac{3}{2}$.
Thus when
$z=z_1=n-\frac{1}{2}>n-\frac{3}{2}$, we will have $k(\lambda)=1$.

From Theorem \ref{Bpartition}, we have $k(\lambda)=0$ if and only if $z\in \mathbb{Z}$ and $z\geq \lambda_2-\lambda_1=2n-2$. Thus when
$z=z_0=2n-2$, we will have $k(\lambda)=0$.



\item When $Q(\lambda_0)=\mathfrak{su}(1,n-1)$ and $R(\lambda_0)=\mathfrak{so}(2,2n-1)$, we have 
$$\lambda_2=\lambda_3=\cdots=\lambda_n=\frac{1}{2}.$$
From \cite{EHW} we know that $L(\lambda)$ is unitarizable if and only if $z\le n-\frac{1}{2}$.

From Theorem \ref{Bpartition}, we have $k(\lambda)=2$ if and only if $z\notin \frac{1}{2}\mathbb{Z}$, or $z\in \mathbb{Z}$ and $z< \lambda_2-\lambda_1=2n-1$, or $z\in\frac{1}{2}+\mathbb{Z}$ and $z\leq -\lambda_1-n+\frac{1}{2}=n-1$.

Now when $z< n-\frac{1}{2}=z_1$, we will have 
$z\notin \frac{1}{2}\mathbb{Z}$,
 or $z\in\mathbb{Z}$ and $z<n-\frac{1}{2}<2n-1$, or $z\in\frac{1}{2}+\mathbb{Z}$ and $z\leq n-\frac{3}{2}<n-1$. Thus we have $k(\lambda)=2$.

From Theorem \ref{Bpartition}, we have $k(\lambda)=1$ if and only if $z\in \frac{1}{2}+\mathbb{Z}$ and  $z>-\lambda_1-n+\frac{1}{2}=n-1$.
Thus when
$z=z_1=n-\frac{1}{2}>n-1$, we will have $k(\lambda)=1$.

\end{enumerate}

Now we complete the proof of all cases.

\subsection{Proof for $G=E_{6(-14)}$} 
 For $E_{6(-14)}$, from \cite[\S 12]{EHW} the root system $Q(\lambda_0)=R(\lambda_0)$ is of type $\mathfrak{su}(1,p)$ with $1\le p \le 5$,  $\mathfrak{so}(2,8)$ or $\mathfrak{e}_{6(-14)}$. For all cases, we have $\lambda_6=\lambda_7=-\lambda_8:=b$ by \cite{Bour}. From Table \ref{constants}, we have $c=3$ and $(\rho,\beta^{\vee})=11$. $(\lambda_0+\rho,\beta^{\vee})=0$ implies that $\lambda_1+\cdots+\lambda_5-\lambda_6-\lambda_7+\lambda_8=-22$.

\begin{enumerate}[leftmargin=18pt]
	\item When $Q(\lambda_0)=R(\lambda_0)$ is of type $\mathfrak{su}(1,5)$, we have
	$$-\lambda_1=\lambda_2=\lambda_3=\lambda_4=\lambda_{5}:=a\in \frac{1}{2}\mathbb{Z}_{\geq 0}.$$
From \cite{EHW} we know that $L(\lambda)$ is unitarizable if and only if $z\le 5$. In this case, we have   $3a-3b=-22$ since $\lambda_1+\cdots+\lambda_5-\lambda_6-\lambda_7+\lambda_8=-22$.

From Theorem \ref{e6-GK}, we have $$-(\lambda_0,\beta_{1,2})=\frac{1}{2}(a+3b)=\frac{1}{2}(a+3a+22)=2a+11$$ and $k(\lambda)=2$ if and only if $\lambda$ is not integral or $z\in\mathbb{Z}$ and $$z\leq -4+\min\{-(\lambda_0,\beta_{1,2})\}=7+2a.$$
Now we have $z\leq 5<7+2a$, so $k(\lambda)=2$.

The proof for other cases of type $\mathfrak{su}(1,p)$ with $1\le p \le 4$ is similar, and we omit the details here.

\item When $Q(\lambda_0)=R(\lambda_0)$ is of type $\mathfrak{so}(2,8)$, we have
	$$\lambda_1=\lambda_2=\lambda_3=\lambda_4=0<\lambda_5\in \mathbb{Z}_{> 0}.$$
From \cite{EHW} we know that $L(\lambda)$ is unitarizable if and only if $z\le 4$ or $z=7$. In this case, we have  $\lambda_5-3b=-22$.

From Theorem \ref{e6-GK}, we have $$-(\lambda_0,\beta_{1,2})=\frac{1}{2}(a+3b)=\frac{1}{2}(\lambda_5+3b)=\frac{1}{2}(\lambda_5+\lambda_5+22)=\lambda_5+11$$ and $k(\lambda)=2$ if and only if $\lambda$ is not integral or $z\in\mathbb{Z}$ and $$z\leq -4+\min\{-(\lambda_0,\beta_{1,2})\}=7+\lambda_5.$$
Now we have $z\leq 7<7+\lambda_5$, so $k(\lambda)=2$.

\item When $Q(\lambda_0)=R(\lambda_0)$ is of type $\mathfrak{e}_{6(-14)}$, we have
	$$\lambda_1=\lambda_2=\lambda_3=\lambda_4=\lambda_5=0.$$
From \cite{EHW} we know that $L(\lambda)$ is unitarizable if and only if $z\le 8=z_1$ or $z=11=z_0$. In this case, we have  $3b=22$.

From Theorem \ref{e6-GK}, we have $$-(\lambda_0,\beta_{1,2})=\frac{1}{2}(0+3b)=11$$ and $k(\lambda)=2$ if and only if $\lambda$ is not integral or $z\in\mathbb{Z}$ and $$z\leq -4+\min\{-(\lambda_0,\beta_{1,2})\}=7.$$

Now when $z< 8=z_1$,  $z$ will be not integral,  or $z\in \mathbb{Z}$  and $z\leq 7$.
Thus we have $k(\lambda)=2$.

From Theorem \ref{e6-GK}, we have $$-(\lambda_0,\alpha_{1})=\frac{1}{2}(0+3b)=11$$ and $k(\lambda)=1$ if and only if $z\in\mathbb{Z}$ and $$7=-4+\min\{-(\lambda_0,\beta_{1,2})\}<z\leq -(\lambda_0,\alpha_{1})-1=10.$$
Thus when $z=z_1=8$, we will have $k(\lambda)=1$.

From Theorem \ref{e6-GK}, we have $k(\lambda)=0$ if and only if $z\in\mathbb{Z}$ and $$z> -(\lambda_0,\alpha_{1})-1=10.$$
Thus when $z=z_0=11$, we will have $k(\lambda)=0$.

\end{enumerate}
Now we complete the proof for all cases.

\subsection{Proof for $G=E_{7(-25)}$} 
 For $E_{7(-25)}$, from \cite[\S 13]{EHW} the root system $Q(\lambda_0)=R(\lambda_0)$ is of type $\mathfrak{su}(1,p)$ with $1\le p \le 6$,  $\mathfrak{so}(2,10)$ or $\mathfrak{e}_{7(-25)}$. For all cases, we have $\lambda_7=-\lambda_8$ by \cite{Bour}. From Table \ref{constants}, we have $c=4$ and $(\rho,\beta^{\vee})=17$. $(\lambda_0+\rho,\beta^{\vee})=0$ implies that  $\lambda_8=-\frac{17}{2}$.
\begin{enumerate}[leftmargin=18pt]
    \item When $Q(\lambda_0)=R(\lambda_0)$ is of type $\mathfrak{su}(1,6)$, we have
$$0<\lambda_1=\lambda_2=\lambda_3=\lambda_4=\lambda_5:=a\in\frac{1}{2}\mathbb{Z}\text{~and~} \lambda_8-(\sum\limits_{i=2}^7\lambda_i)+\lambda_1=0.$$
From \cite{EHW} we know that $L(\lambda)$ is unitarizable if and only if $z\le 6$. In this case we have $\lambda_6=-17-3a$.

From Theorem \ref{e7-GK} we have $k(\lambda)=3$ if and only if $\lambda$ is not integral or $z\in\mathbb{Z}$ and 
$$z\leq -9+\min\{-(\lambda_0,\beta_{3,4,5})\}=8+2a.$$
Now we have $z\leq 6<8+2a$,  so $k(\lambda)=3$.  

The proof for other cases of type $\mathfrak{su}(1,p)$ with $1\le p \le 5$ is similar, and we omit the details here.

     \item When $Q(\lambda_0)=R(\lambda_0)$ is of type $\mathfrak{so}(2,10)$, we have 
$$\lambda_1=\lambda_2=\lambda_3=\lambda_4=0<\lambda_5\in\mathbb{N}^{\ast}\text{~and~} \lambda_8-(\sum\limits_{i=2}^7\lambda_i)+\lambda_1=0.$$
From \cite{EHW} we know that $L(\lambda)$ is unitarizable if and only if $z\le 5=z_3$ or $z=9=z_2$. In this case $\lambda_5+\lambda_6=\lambda_8-\lambda_7=-17$.

From Theorem \ref{e7-GK} we have 
$$-(\lambda_0,\beta_{3,4})=-\frac{1}{2}(-\lambda_5+\lambda_6-\lambda_7+\lambda_8)=\lambda_5+17,-(\lambda_0,\beta_{5})=-(\lambda_5+\lambda_6)=17$$ and $k(\lambda)=3$ if and only if $\lambda$ is not integral or $z\in\mathbb{Z}$ and 
$$z\leq -9+\min\{-(\lambda_0,\beta_{3,4,5})\}=8.$$
Now when $z<9=z_2$,  $z$ will be not integral,  or $z\in \mathbb{Z}$  and $z\leq 8$.
Thus we have $k(\lambda)=3$.


From Theorem \ref{e7-GK} we have  $k(\lambda)=2$ if and only if  $z\in\mathbb{Z}$ and $$8=-9+\min\{-(\lambda_0,\beta_{3,4,5})\}<z\le -(\lambda_0,\beta_{6,7})-5=\lambda_5+12.$$
Thus when $z=9=z_2$, we will have $k(\lambda)=2$.


\item When $Q(\lambda_0)=R(\lambda_0)$ is of type $\mathfrak{e}_{7(-25)}$, we have   $$\lambda_1=\lambda_2=\lambda_3=\lambda_4=\lambda_5=0\text{~and~} \lambda_8-(\sum\limits_{i=2}^7\lambda_i)+\lambda_1=0.$$
From \cite{EHW} we know that $L(\lambda)$ is unitarizable if and only if $z\le 9=z_2$ or $z=13=z_1$ or $z=17=z_0$. In this case we have $\lambda_6=-17$.

From Theorem \ref{e7-GK} we have $k(\lambda)=3$ if and only if $z$ is not integral or $z\in\mathbb{Z}$ and 
$$z\leq -9+\min\{-(\lambda_0,\beta_{3,4,5})\}=8.$$
Now when $z<9=z_2$, $z$ will be not integral, or $z\in\mathbb{Z}$ and $z\leq 8$, hence we have $k(\lambda)=3$.

From Theorem \ref{e7-GK} we have  $k(\lambda)=2$ if and only if  $z\in\mathbb{Z}$ and $$8=-9+\min\{-(\lambda_0,\beta_{3,4,5})\} <z\le -(\lambda_0,\beta_{6,7})-5=12.$$
     Thus when $z=9=z_2$, we will have $k(\lambda)=2$.

 From Theorem \ref{e7-GK} we have  $k(\lambda)=1$ if and only if  $z\in\mathbb{Z}$ and $$12=-(\lambda_0,\beta_{6,7})-5< z\le -(\lambda_0,\beta_8)-1=16.$$    
Thus when $z=13=z_1$, we will have $k(\lambda)=1$.

From Theorem \ref{e7-GK} we have  $k(\lambda)=0$ if and only if  $z\in\mathbb{Z}$ and $$z\geq -(\lambda_0,\beta_8)-1=16.$$
Thus when $z=17=z_0$, we will  have $k(\lambda)=0$.
 
\end{enumerate}

So far, we have completed the proof of all cases of Theorem \ref{kconst}.

	\subsection*{Acknowledgments}
	
	Z. Bai was supported in part by the National Natural Science Foundation of 
	China (No. 12171344) and the National Key $\textrm{R}\,\&\,\textrm{D}$ Program of China (No. 2018YFA0701700 and No. 2018YFA0701701).

\printbibliography
\end{document}